\newtheorem{theorem}{Theorem}
\newtheorem{proposition}[theorem]{Proposition}
\newtheorem{lemma}[theorem]{Lemma}
\newtheorem{conjecture}[theorem]{Conjecture}
\newtheorem{corollary}[theorem]{Corollary}
\newtheorem{definition}[theorem]{Definition}
\def\Proof{\medskip\noindent{\bf Proof: }}
\def\Z{\mathbb{Z}}
\def\C{\mathbb{C}}
\def\Q{\mathbb{Q}}
\def\C{\mathbb{C}}
\def\N{\mathbb{N}}
\def\F{\mathbb{F}}
\def\H{\mathbb{H}}
\def\Pi{\mathbb{P}^{\infty}}
\def\qed{\hfill$\square$\medskip}
\def\Zpk{\mathbb{Z}/p^{k}}
\def\Zpk1{\mathbb{Z}/p^{k-1}}
\newcommand{\rref}[1]{(\ref{#1})}
\newcommand{\cform}[3]{\begin{array}{c}
{\scriptstyle #3}\\
#1\\
{\scriptstyle #2}\end{array}}
\newcommand{\beg}[2]{\begin{equation}\label{#1}#2\end{equation}}
\def\r{\rightarrow}
\def\H{\mathbb{H}}
\def\F{\mathbb{F}}
\def\ms{\mathcal{S}}
\def\sl2{\widetilde{SL_{2}(\Z)}}
\begin{document}
\title[The symplectic Verlinde algebras]{The symplectic Verlinde algebras
and string $K$-theory}
\author[I.Kriz and C.Westerland, contributions
by J.T.Levin]{Igor Kriz and Craig Westerland\\ with contributions by Joshua T. Levin}
\thanks{I.K. was partially supported by the NSA.  
C.W. was partially supported by the NSF under agreement DMS-0705428.  
J.T.L. was partially supported by the NSF under the REU program.}
\maketitle

\begin{abstract}
We construct string topology operations in twisted $K$-theory.  
We study the examples given by symplectic Grassmannians, computing 
$K^\tau_*(L \H P^\ell)$ in detail.  
Via the work of Freed-Hopkins-Teleman, these computations are related 
to completions of the Verlinde algebras of $Sp(n)$.  We compute 
these completions, and other relevant information about the
Verlinde algebras. We also identify the completions with the twisted 
$K$-theory of the Gruher-Salvatore pro-spectra.   
Further comments on the field theoretic nature of these constructions are made. \\
\end{abstract}

Much of the recent history of algebraic topology 
has been concerned with manifestations of ideas from mathematical physics within topology.  A stunning example is
Chas-Sullivan's theory of string topology \cite{cs}, 
which provides a family of algebraic structures analogous to conformal field theory 
on the homology $H_*(LM)$ of the free loop space $LM$ of a closed orientable
manifold $M$ (\cite{godin}).  The work of Chas and Sullivan started
an entirely new field of algebraic topology, 
and led to papers too numerous to quote. Equally interesting
as this analogy however is the fact that it is not quite precise: while the notion
of conformal field theory
is supposed to be completely self-dual, the string topology coproduct in $H_*LM$ has no
co-unit.

\vspace{3mm}

The inspiration for this paper came from two sources: one is the paper of 
Cohen and Jones \cite{cj},
generalizing string topology to an arbitrary $M$-oriented generalized cohomology theory.
The other is the work of Freed, Hopkins, and Teleman \cite{fht} which identified the
famous Verlinde algebra of a compact Lie group $G$ with its equivariant 
twisted $K$-theory.  
It follows that a completion of the Verlinde algebra is isomorphic to the (non-equivariant)
twisted $K$-theory
of $LBG$. 
In more detail, in \cite{fht}, Freed-Hopkins-Teleman establish a ring isomorphism
\beg{eintro2}{{}^G K^*_\tau(G) \cong V(\tau-h(G), G)
}
between the twisted equivariant $K$-theory of a simple, simply 
connected, compact Lie group $G$ acting on itself by conjugation 
and the Verlinde algebra of positive energy representations 
of $LG$ at level $\tau-h(G)$.  It is well known that the 
Borel construction for the conjugation action $G \times_G EG$ 
is homotopy equivalent to $LBG$.  So, using a twisted version of 
the Atiyah-Segal completion theorem due to C. Dwyer \cite{cdwyer} and 
Lahtinen \cite{lahtinen}, we may conclude
\beg{eintro3}{K^*_\tau(LBG) \cong V(\tau-h(G), G)^{\wedge}_I
}
where the Verlinde algebra is completed at its augmentation ideal.
Now a striking property of the Verlinde algebra is that it is a Poincare algebra,
which is the same thing as a $2$-dimensional topological field theory (with no asymmetry).
As far as we know, there is no analogous result involving ordinary homology:
in some sense, while the $K$-theory information should be on the level
of a modular functor of 
a conformal field theory, the field theory in homology should be on
the level of the conformal field theory itself. From CFT, one can
produce finite models in characteristic $0$ in the case of $N=2$ supersymmetry,
the $A$-model and the $B$-model. However, $N=2$-supersymmetry
corresponds to extra data beyond topology (Calabi-Yau structure). (Indeed,
Fan, Jarvis and Ruan \cite{fjr} give a construction of $A$-module TQFT, coupled with compactified
gravity, in the case of a Landau-Ginzburg model orbifolds, which
is related to the Calabi-Yau case, using Gromov-Witten theory.
We return to this in the Concluding remarks.)
Is it then possible that by considering an analogue of string topology on 
twisted $K$-theory,
we could mimic, using string topology constructions, 
some of the properties of the
Verlinde algebra, and in particular, perhaps, construct a 
self-dual topological field theory
in some sense? The purpose of this paper was to investigate this question.

\vspace{3mm}

What we found was partially satisfactory, partially not. First of all, it turns out that twisted
$K$-theory of loop spaces is quite difficult to compute, even in very simple cases. We originally thought that the question may be easier
to tackle for quotients of symplectic groups (such as quaternionic Grassmanians),
because of their relatively sparse homology. This led us to specialize to the symplectic
case (it is, of course, only an example, analogous discussions should exist for
other compact Lie groups). However, it turns out that the question is quite hard, 
and relatively little could be said beyond the case of projective spaces using our
methods. Even for $\H P^\ell$, where one has a collapsing spectral sequence,
it appears that one can only describe the exact extensions by considering, indeed,
the string topology product. 

\vspace{3mm}

Exploring the connection with the Verlinde algebra turned out to be more of a success:
indeed, we show that the product in the Verlinde algebra is connected with 
the string topology product in the twisted $K$-theory of free loop space.
A key step is the investigation of Gruher and Salvatore \cite{gs, kate}, who introduced
spaces interpolating between the loop spaces of finite and infinite Grassmannians. 
On the other hand, it turned out that the coproduct is {\em not} related in the same way
to the Verlinde algebra coproduct, and in fact exhibits the same asymmetry
as elsewhere in string topology. In particular, the ``genus stabilization'' string
topology operation is $0$, while it is always injective for the Verlinde algebra. 
The genus stabilization $T$ is interesting for the following reason:
Godin has recently shown \cite{godin} that $H_*(LM)$ admits the structure of a 
``non-counital homological conformal field theory'': this
means that $H_*(LM)$ is an algebra over the homology $H_*(\ms)$ of the Segal-Tillmann 
surface operad, built out of the classifying spaces of mapping class groups.  
In \cite{tillmann}, it is shown that for a topological algebra $A$ over $\ms$, 
the group completion of $A$ is an infinite loop space.  This group completion 
involves inverting $T$.  In the string topology setting, this 
trivializes the algebra, since $T=0$.  On the other hand,
one can show for example that 
a $K$-module whose $0$-homotopy is
the Verlinde algebra with $T$ inverted admits the structure of an 
$E_\infty$-ring spectrum (this will be discussed in a subsequent paper).
We speculate in the Concluding remarks of this paper that perhaps a 
Gromov-Witten type construction
can lead to topological field theories in this 
context, bridging the gap between both contexts.

\vspace{3mm}

Computationally, we determine the
twisted $K$-homology string product and coproduct for quaternionic projective
spaces, exhibiting that these structures contain, in a cute way, more information
than the corresponding structures in ordinary homology. Also, one gets the sense
that the connection with the Verlinde algebra makes the twisted $K$-theory
construction somehow ``smaller'' than the untwisted case (by virtue of the $d_3$ differential in the twisted Atiyah-Hirzebruch spectral sequence), although the answer is not finite. 

\vspace{3mm}
The present paper is organized as follows: In Section \ref{s1}, we will start,
as a ``warm-up case'', the computation of twisted $K$-theory of the
free loop spaces of symplectic
projective spaces and the related Gruher-Salvatore spaces. We will see
in particular that even here, the free loop space is tricky, and to resolve it
further, string topology structure is needed. This, in some sense, 
serves as a motivation for the remainder of the paper. 
In Section \ref{s2}, we will attempt to extend this calculation to the
case of symplectic Grassmanians of $Sp(n)$ with $n>1$. We will see that
the situation there is still much more complicated, and raises purely algebraic
questions about the symplectic Verlinde algebras, for example the structure
of their associated graded rings under the Atiyah-Hirzebruch filtration.
These and related algebraic questions will be treated in Sections \ref{s3},
\ref{scomp}. In Section \ref{s3}, we will specifically consider the case
of Verlinde algebras of representation level $1$, where more precise information
can be obtained. In Section \ref{scomp}, we will consider the general case.
Our results include a complete explicit computation of the Douglas-Braun number,
the completion of the symplectic Verlinde algebras, a complete computation
of $T$ for the case $n=1$ and an estimate (and precise conjecture) for the 
$n>1$ case. In Section \ref{operations_section}, we finally return to string
topology, constructing the string product in twisted $K$-theory, and computing
the string topology ring structure on the twisted $K$-theory of $L\H P^\ell$,
and as a result, we determine the extensions in its additive structure. We will
see that in some sense, the ring structure is more non-trivial and interesting
than in homology.
In Section \ref{scop}, we consider the string coproduct, show the failure of
stabilization, and determine the string coproduct for quaternionic projective
spaces (with a non-trivial result). Finally, in Section \ref{limit_section},
we discuss the comparison of the product with the Verlinde algebra product
via Gruher-Salvatore spaces, and
the failure of analogous behaviour in the case of the coproduct. Section \ref{sconc}
contains concluding remarks, including the speculations on Gromov-Witten theory.
In the Appendix \ref{app}, we review very briefly some foundational material
needed in this paper.

We would like to thank Chris Douglas, Nora Ganter, Nitu Kitchloo, and Arun Ram for helpful conversations on this material.

\section{The case $n=1$.}
\label{s1}

In this paper, we focus on symplectic groups.  Denote by $G_{Sp}(\ell,n)$ the symplectic Grassmanian of $n$-dimensional
$\H$-submodules of $\H^{\ell+n}$ where $\H$ is the algebra of quaternions.
By $Sp(n)$, we shall mean its compact form, which is the group of automorphisms
of the $\H$-module $\H^n$ which preserve the norm. Of course, there is
a canonical map $G_{Sp}(\ell,n)\r BSp(n)$. Consequently,
we can induce $K$-theory twistings on $LG_{Sp}(\ell,n)$
from $K$-theory twistings on $LBSp(n)$. Here $L$ denotes the free loop space.
Twistings are classified by $H^3(?,\Z)$. We have $H^3(LBSp(n),\Z)
\cong H^{3}_{Sp(n)}(Sp(n),\Z)$ where the right hand side denotes
Borel cohomology, and $Sp(n)$ acts on itself by conjugation. As reviewed in
\cite{fht}, this group is $\Z$, and there is a canonical generator $\iota$ such
that the twisting $\tau=m\iota$ corresponds to level $m-n-1$ lowest
weight twisted representations of $LSp(n)$, when this number is positive
(since $n+1$ is the dual Coxeter number of $Sp(n)$). This is the twisting
we will be considering here.

\vspace{3mm}
We begin with a concrete computation of $K^\tau_*(L \H P^\ell)$.  
We proceed in stages, beginning with a computation of the completion of the 
Verlinde algebra for $Sp(1)$.  Using this, we compute the twisted 
$K$-theory of an intermediate space $Y(\ell, 1)$ which, in 
combination with the loop product developed in section \ref{operations_section}, 
allows us to compute $K^\tau_*(L \H P^\ell)$.

The Verlinde algebra of twisting level $m\geq 3$ (loop group representation
level $m-2$) is isomorphic to
\beg{e1}{V_m=V(m,1)=\Z[x]/Sym^{m-1}(x)
}
where $x$ is the tautological representation of $Sp(1)=SU(2)$ and
the polynomials $Sym^k(x)$ are defined inductively by
\beg{e2}{Sym^0(x)=1, \;Sym^1(x)=x, \; xSym^k(x)=Sym^{k+1}(x)+Sym^{k-1}(x).}

The augmentation ideal $I$ of $V_m$ is generated by $x-2$.  If we change variables to $y=x-2$, and define $\sigma^{m-1}(y) = Sym^{m-1}(y+2)$, then the completion of $V_m$ at $I$ is the quotient of the power series ring $\Z[[y]]$:
\beg{f1}{(V_m)^{\wedge}_I = \Z[[y]] / \sigma^{m-1}(y)
}
A straightforward induction shows that $\sigma^{m-1}(0) = Sym^{m-1}(2) = m$, so $m$ lies in the ideal $(y)$.  Consequently, equation \rref{f1} implies
\beg{f2}{(V_m)^{\wedge}_I = \prod_{p|m} \Z_p[[y]] / \sigma^{m-1}(y)}
We would like to know how many copies of $\Z_p$ appears in this completion.  We will prove a generalization of the following result in Section \ref{scomp} below. However, it is instructive to prove this special case immediately. 

\begin{proposition}
\label{p1}
Let $p$ be a prime and let $p^i||m$ (by which we mean that $p^i$ is the
$p$-primary component of $m$).  Let
$$\delta(p,m)= \left\{ \begin{array}{ll}(p^i-1)/2 &\text{if $p\neq 2$}\\
2^i-1&\text{if $p=2$}.
\end{array} \right.$$
Then the $p$-primary component of $(V_m)^{\wedge}_{I}$ is
$(\Z_p)^{\delta(p,m)}$.
\end{proposition}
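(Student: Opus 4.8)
The plan is to identify each local factor in \rref{f2} with a free $\Z_p$-module whose rank is a Weierstrass degree, and then to compute that degree by an $s$-adic valuation count after the substitution $t=1+s$ in the Chebyshev-type formula for $Sym^{m-1}$.

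First I would reduce to a statement about multiplicities of roots. By \rref{f2} the $p$-primary component of $(V_m)^{\wedge}_I$ is the single factor $\Z_p[[y]]/\sigma^{m-1}(y)$. Because $Sym^{m-1}(x)$ is monic of degree $m-1$ in $x$, the polynomial $\sigma^{m-1}(y)=Sym^{m-1}(y+2)\in\Z_p[y]$ is monic of degree $m-1$ in $y$, hence its reduction mod $p$ is a nonzero element of $\F_p[[y]]$, so the Weierstrass preparation and division theorems apply in $\Z_p[[y]]$: one gets $\sigma^{m-1}(y)=P(y)U(y)$ with $U\in\Z_p[[y]]^{\times}$ and $P\in\Z_p[y]$ distinguished of degree $d$, where $d$ is the Weierstrass degree of $\sigma^{m-1}$, equivalently the order of vanishing of $\sigma^{m-1}(y)\bmod p$ at $y=0$, equivalently the multiplicity of $x=2$ as a root of $Sym^{m-1}(x)\bmod p$. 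Weierstrass division then gives
\[
\Z_p[[y]]/\sigma^{m-1}(y)\;\cong\;\Z_p[[y]]/P(y)\;\cong\;\Z_p[y]/P(y),
\]
which is a free $\Z_p$-module of rank $d$. So it remains to show $d=\delta(p,m)$.

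Next I would compute $d$ mod $p$ using a closed form. The recursion \rref{e2} yields the generating identity $\sum_{k\ge0}Sym^k(x)z^k=(1-xz+z^2)^{-1}$, equivalently $Sym^{m-1}(x)=(t^m-t^{-m})/(t-t^{-1})$ for $x=t+t^{-1}$. Setting $t=1+s$ turns this into an identity in $\Z[[s]]$:
\[
x-2=\frac{s^2}{1+s},\qquad Sym^{m-1}(x)=\frac{(1+s)^{1-m}\bigl((1+s)^{2m}-1\bigr)}{s(s+2)}.
\]
Since $y=x-2=s^2/(1+s)$ has $s$-adic valuation $2$, for every nonzero $g\in\F_p[[y]]$ we have $v_s(g)=2\,v_y(g)$, where $v_s$ is the valuation of $\F_p[[s]]$; hence $d=\tfrac12\,v_s\bigl(\sigma^{m-1}(y)\bmod p\bigr)$, computed from the displayed formula. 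Now write $2m=p^{j}u$ with $p\nmid u$; over $\F_p$ one has $(1+s)^{2m}=(1+s^{p^{j}})^{u}=1+u\,s^{p^{j}}+\cdots$ (by the Frobenius), so $v_s\bigl((1+s)^{2m}-1\bigr)=p^{j}$, while $(1+s)^{1-m}$ is a unit and $v_s\bigl(s(s+2)\bigr)$ equals $1$ if $p\neq2$ and $2$ if $p=2$ (because $s+2$ is a unit for $p\neq2$ but equals $s$ for $p=2$). Thus $d=\tfrac12(p^{j}-1)$ if $p\neq2$ and $d=\tfrac12(2^{j}-2)=2^{\,j-1}-1$ if $p=2$. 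Finally $j=v_p(2m)=v_p(m)=i$ for $p$ odd, giving $d=(p^{i}-1)/2$, and $j=v_2(2m)=v_2(m)+1=i+1$ for $p=2$, giving $d=2^{i}-1$; in both cases $d=\delta(p,m)$, as desired.

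The computation itself is short; the points requiring care are the reduction in the first step (extracting a free $\Z_p$-module of the correct rank from $\Z_p[[y]]/\sigma^{m-1}(y)$ cleanly via Weierstrass preparation) and the valuation bookkeeping in the second step — in particular the factor of $2$ produced by $y=s^2/(1+s)$ and the degeneration of $s(s+2)$ at $p=2$, which is exactly what splits the answer into the two cases in the definition of $\delta$. The same argument, with $Sym^{m-1}$ replaced by the relevant polynomial, should yield the generalization promised in Section \ref{scomp}.
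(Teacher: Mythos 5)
Your proof is correct, but it takes a genuinely different route from the one in the paper. The paper works over a ring of Witt vectors $W$ large enough to split $Sym^{m-1}$, uses the explicit roots $\zeta_{2m}^{k}+\zeta_{2m}^{-k}$ from \rref{e4}, and counts how many of the quantities $\zeta_{2m}^{k}+\zeta_{2m}^{-k}-2=\zeta_{2m}^{-k}(\zeta_{2m}^{k}-1)^2$ have positive valuation, invoking the standard fact that $\zeta_\ell-1$ has positive valuation iff $\ell$ is a $p$-power. You instead stay entirely over $\Z_p$: Weierstrass preparation reduces the claim to computing the multiplicity $d$ of $x=2$ as a root of $Sym^{m-1}(x)\bmod p$, and you extract $d$ from the closed form $Sym^{m-1}(t+t^{-1})=(t^m-t^{-m})/(t-t^{-1})$ by the substitution $t=1+s$ and a single $s$-adic valuation count; the factor of $2$ from $v_s(y)=2$ and the degeneration of $s(s+2)$ at $p=2$ account exactly for the two cases of $\delta(p,m)$. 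Both arguments ultimately rest on the same Chebyshev parametrization $x=t+t^{-1}$, but yours has the advantage of producing the free $\Z_p$-module structure directly (the paper's descent from ``a product of copies of $W$'' back to $(\Z_p)^{\delta(p,m)}$ is left implicit), while the paper's root-counting formulation is the one that generalizes most readily to the multivariate $Sp(n)$ situation of Theorem \ref{tcomp1}, where a one-variable Weierstrass argument is no longer available — so your closing remark about the generalization should be taken with caution. All the individual steps check out: the identity $y=s^2/(1+s)$, the formula $v_s\bigl((1+s)^{2m}-1\bigr)=p^{j}$ with $2m=p^j u$, and the bookkeeping $j=i$ for odd $p$ versus $j=i+1$ for $p=2$ are all verified correctly.
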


Notice that $\delta(p,m)$ is the number of $2m$-th roots of unity that have positive imaginary part and also are $p$-power roots of unity.

\Proof
The polynomials
\beg{e3}{Sym^{m-1}(x)}
are Chebyshev polynomials of the
second kind applied to $2x$. This means that if we let $\zeta_m$ to be
the primitive $m$-th root of unity, then
the roots of \rref{e3} are
\beg{e4}{\zeta_{2m}^{k}+\zeta_{2m}^{-k},\;k=1,...,m-1.
}
This means that if we denote by $W$ a ring of Witt vectors at the prime
$p$ in which \rref{e3} splits, we obtain an injective homomorphism
\beg{e5}{V_{m}\otimes W\r\cform{\prod}{k=1}{m-1} W[x]/(x-
\zeta_{2m}^{k}-\zeta_{2m}^{-k})
}
with finite cokernel.
Completing this at the ideal $(x-2)$ will therefore additively give
rise to a product of as
many copies of $W$ as there are numbers $k=1,...,m-1$ such that
\beg{e6}{\zeta_{2m}^{k}+\zeta_{2m}^{-k}-2
}
has positive valuation. But now \rref{e6} is equal to
$$\zeta_{2m}^{-k}(\zeta_{2m}^{k}-1)^2,$$
so \rref{e6} has positive valuation if and only if $\zeta_{2m}^{k}-1$
does. It is standard that $\zeta_\ell-1$ has positive valuation if
and only if $\ell=p^i$ for some $i$. Indeed, sufficiency follows from
the fact that $((x+1)^{p^i}-1)/((x+1)^{p^{i-1}}-1)$ is an Eisenstein
polynomial with root $\zeta_{p^i}-1$. To see necessity, if $\zeta-1$ has
positive valuation, so does $\zeta^p-1$, so it suffices to consider the
case when $\ell$ is not divisible by $p$. But then we have a field
$\mathbb{F}_{p}[\zeta_{\ell}]$, in which $\zeta_{\ell}-1$ is invertible,
so its lift to $W$ cannot have positive valuation.
\qed

It follows from the completion theorem that the twisted $K$-theory
of a simply connected simple compact Lie group is isomorphic
to the completion of the Verlinde algebra (at a twisting
where the Verlinde algebra exists) at the augmentation ideal
with a shift equal to dimension and is equal to $0$ in dimensions
of opposite parity,
so in particular
\beg{etwee1}{K^{0}_{\tau}(L \H P^\infty)=0,
\;K^{1}_{\tau}(L \H P^\infty)\cong (V_m)^{\wedge}_{I}.
}
Next, we will compute the twisted $K$-theory of $L\H P^\ell$. We will
see that the answer is actually
quite complicated; this motivates the structure which we shall
introduce
later, and to which we will partially need to refer to complete the calculation.

\vspace{3mm}
Demonstrating another theme of this paper, we shall also see that
before considering $L\H P^\ell$, it helps to consider the ``intermediate''
space 
\beg{etwee2}{Y(\ell,1)=L\H P^{\infty}\times_{\H P^{\infty}} \H P^\ell.}
Note that \rref{etwee2} has the homotopy type of a finite-dimensional manifold: an $Sp(1)$-bundle over $\H P^\ell$. We will
see in Section \ref{s2} that $Y(\ell,1)$ is orientable
with respect to $K$-theory. Thus, its twisted $K$-homology and cohomology
with the same twisting are isomorphic, with a shift in dimensions, which
in this case is odd. 
For now, however, the most important property for us is that the twisted $K$-theory
of $Y(\ell,1)$ is easier to determine. Let us begin with a definition.
Let $i$ be such that $p^i||m$. Let $\ell$ be a positive integer.
Let $r$ be a positive integer and let $p$ be a prime. Suppose
further that $r\leq \ell+1$ and
$$2^{j-1}-1<r\leq 2^j-1$$
if $p=2$ and
$$\frac{p^{j-1}-1}{2}<r\leq \frac{p^j-1}{2}$$
if $p>2$
for some $j=1,...,i$ if $p=2$.  Then put 
$$\epsilon(p,\ell,r)= \lceil \frac{2+\ell-r}{2^{j-1}}\rceil$$
if $p=2$ and 
$$\epsilon(p,\ell,r)=\lceil\frac{2(2+\ell-r)}{(p-1)p^{j-1}}\rceil$$
if $p>2$. Let 
$$\epsilon(p,\ell,r)=0$$
in all other cases. 

\begin{theorem}
\label{t1}
We have
\beg{etwee3}{K_{1}^{\tau}Y(\ell,1)=K^{0}_{\tau}Y(\ell,1)=0
}
and
\beg{etwee4}{K_{0}^{\tau}Y(\ell,1)=K^{1}_{\tau}Y(\ell,1)=
\cform{\bigoplus}{p,r}{}\Z/p^{\epsilon(p,\ell,r)}.
}
\end{theorem}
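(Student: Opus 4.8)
The plan is to compute $K^\tau_*(Y(\ell,1))$ via the twisted Atiyah-Hirzebruch spectral sequence (AHSS) for the fibration $Sp(1) = S^3 \to Y(\ell,1) \to \H P^\ell$, comparing it with the known answer for $Y(\infty,1) = L\H P^\infty$. First I would recall that $\H P^\ell$ has a cell in each dimension $0, 4, 8, \dots, 4\ell$, and $LBSp(1) = L\H P^\infty$ pulls back to give the $S^3$-bundle $Y(\ell,1)$; its cohomology is the truncation of $H^*(L\H P^\infty)$ in the appropriate range. The twisting $\tau = m\iota$ produces a $d_3$ differential in the AHSS which, on the $E_3$-page $H^*(Y(\ell,1);\Z)$, is multiplication by (a unit times) the class classifying the twist — essentially the image of $m \cdot u$ where $u$ is the generator of $H^4(\H P^\ell)$, together with the transgression of the $S^3$ fibre class. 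Because the Verlinde algebra computation (Proposition \ref{p1} and equation \rref{etwee1}) already tells us the total rank of the $E_\infty$-page must localize prime by prime, I would work one prime $p$ at a time, writing $p^i \| m$.

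**The key computation.** The heart of the argument is to identify the $d_3$-differential explicitly. I expect that $H^*(Y(\ell,1);\Z)$ is, rationally and after $p$-completion in a range, a truncated polynomial-type algebra on classes coming from $H^*(\H P^\ell)$ and the fibre $H^*(S^3)$, and that $d_3$ sends the fibre generator to $m$ times the base generator (this is the standard description of the transgression for $LBG \to BG$ twisted at level $m$ — it is where the "dual Coxeter number shift" and the factor $m$ enter). Running this differential on the truncated complex, the surviving groups are cyclic $p$-groups whose orders are governed by how many times one can divide by $p^i$ before running off the top cell $H^{4\ell}$ — this is exactly the ceiling expression defining $\epsilon(p,\ell,r)$, with the index $r$ recording which "band" of the completion (in the sense of the roots $\zeta_{2m}^k$ in the proof of Proposition \ref{p1}) the summand comes from. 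Concretely, $r$ ranges over those integers with $\delta(p,m)$ possible values picked out by the inequalities $2^{j-1}-1 < r \le 2^j-1$ (resp. the odd-prime analogue), matching the count $\delta(p,m)$ of surviving $\Z_p$'s in $(V_m)^\wedge_I$ from \rref{etwee1}; truncating at the $\ell$-th cell replaces each $\Z_p$ by $\Z/p^{\epsilon(p,\ell,r)}$. The vanishing statement \rref{etwee3} follows because everything is concentrated in a single parity — the cells of $Y(\ell,1)$ coming from $\H P^\ell$ are in degrees $\equiv 0 \pmod 4$ and those from the $S^3$ fibre shift by $3$, so after the $d_3$ the $E_\infty$-page lives entirely in one parity, forcing the other $K$-group to vanish; the homology/cohomology identification (odd shift, as noted before the theorem) then gives the stated equalities between $K_0$ and $K^1$ etc.

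**Assembling and extensions.** Once the $E_\infty$-page is known, I would argue there are no additive extension problems at this stage: the nonzero entries sit on an antidiagonal pattern so that in each total degree only one group contributes (or they are in distinct filtrations with no room for extensions given that the total $p$-rank is already pinned down by the infinite case \rref{etwee1}). Summing over all primes $p$ dividing $m$ and all admissible $r$ gives the direct-sum formula \rref{etwee4}. Finally I would check the boundary/degenerate cases — $r > \ell+1$, or $r$ outside the range forced by $p^i\|m$ — give $\epsilon = 0$, consistent with the convention in the definition preceding the theorem, so no extra summands appear.

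**The main obstacle.** The hard part will be pinning down the $d_3$-differential precisely — not just up to a unit on the fibre class, but its full action on the truncated algebra $H^*(Y(\ell,1);\Z)$ including the integral (as opposed to rational) structure, since the $\Z/p^{\epsilon}$ orders depend on exact divisibility. In particular one must be careful that the relevant integral cohomology classes are as expected (no unexpected torsion in $H^*(Y(\ell,1);\Z)$ interfering), and that the transgression really is multiplication by $m$ and not by $m$ up to a correction coming from the Euler class of the $S^3$-bundle or from lower-dimensional cell attachments in $\H P^\ell$. I would handle this by base-changing to a Witt-vector ring $W$ where, as in the proof of Proposition \ref{p1}, the relevant polynomial splits, reducing the differential to a diagonal (one-dimensional) form indexed by the roots $\zeta_{2m}^k + \zeta_{2m}^{-k}$; the $\epsilon(p,\ell,r)$ then emerges as the valuation-theoretic count of how the truncation at degree $4\ell$ interacts with the valuation of $\zeta_{2m}^k - 1$, exactly paralleling the infinite-dimensional computation but cut off at a finite stage.
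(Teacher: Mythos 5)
Your overall framing (the twisted Serre/AHSS for $Sp(1)\to Y(\ell,1)\to \H P^\ell$, comparison with the $\ell=\infty$ case, and the parity argument for \rref{etwee3}) matches the paper's first step, and that part is fine. The genuine gap is in your treatment of the extensions. After the twisting differential, the $E_\infty$-page is $(\Z/m)[y]/(y^{\ell+1})$ concentrated in a single parity: every nonzero entry contributes to the \emph{same} total degree, merely in different filtrations, so far from there being ``no room for extensions,'' resolving the additive extensions is the entire content of \rref{etwee4}. (For example, with $m=3$, $\ell=1$ the associated graded is $(\Z/3)^2$, but the theorem asserts $K^{1}_{\tau}Y(1,1)\cong\Z/9$.) Likewise, the cyclic orders $p^{\epsilon(p,\ell,r)}$ cannot ``emerge from running the $d_3$'': that differential is multiplication by $m$ and produces only copies of $\Z/m$ on the $E_\infty$-page.

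Your fallback in the last paragraph --- base-changing to Witt vectors and splitting into one-dimensional factors indexed by the roots $\zeta_{2m}^{k}+\zeta_{2m}^{-k}$ --- does not close the gap either, because the splitting map of Proposition \ref{p1} is only an injection with finite cokernel, and after truncating by $y^{\ell+1}$ that cokernel contaminates the per-root valuation count; this is visible in the answer itself, where the exponent involves $2+\ell-r$ rather than a uniform $\ell+1$. What is missing is, first, the identification of $K^{1}_{\tau}Y(\ell,1)$ as the \emph{ring} $\Z[y]/(\sigma^{m-1}(y),y^{\ell+1})$, which the paper obtains from the $K^0(\H P^\infty)=\Z[[y]]$-module structure on the map from the completed Verlinde algebra together with the spectral sequence comparison; and second, an inductive resolution of the extensions, carried out in the paper via the Eisenstein polynomial $\Phi_{p,m}$: multiplication by $\Phi$ embeds $R(m/p,\ell)$ into $R(m,\ell)$ as filtered modules, acts as $p$ on associated graded objects, and its reduction mod $p$, namely $y^{\delta(p,m)-\delta(p,m/p)}$, records how $p$-multiples of the remaining generators are represented. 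Without a substitute for this step, your argument determines only the associated graded group, not the group itself.
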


A tool that we will need is the Serre spectral sequence in twisted $K$-theory for a fibration $F \to E \to B$ with twisting $\tau \in H^3(F)$ (and pulled back to $E$).  It takes the form $H^*(B; K^*_\tau(F)) \implies K_\tau^*(E)$.

\Proof
First, the equality between twisted $K$-homology and cohomology
follows from the fact that $Y(\ell,1)$ is an odd-dimensional, $K$-orientable
manifold.
The
canonical inclusion
$$Y(\ell,1)\r L\H P^{\infty}$$
induces a map 
\beg{etwee5}{V(m,1)^{\wedge}_{I}\r K^{1}_{\tau}Y(\ell,1).
}
Furthermore, \rref{etwee5} is a map of $K^0(\H P^\infty)=\Z[[y]]$-modules.
Clearly $y^{\ell+1}$ annihilates the target (since the right hand
side is actually a $K^0(\H P^\ell)$-module), so 
\rref{etwee5} induces a map of $\Z[[y]]$-modules
\beg{etwee6}{\Z[y]/(\sigma^{m-1}(y),y^{\ell+1})\r K^{1}_{\tau}Y(\ell,1).
}
We claim that \rref{etwee6} is actually an isomorphism. To this end,
consider the map of fibration sequences
\beg{etwee7}{
\diagram
Sp(1)\rto^= \dto & Sp(1)\dto\\
Y(\ell,1)\dto\rto & L\H P^\infty\dto \\
\H P^\ell\rto & \H P^\infty.
\enddiagram
}
Since $K_\tau^1(Sp(1)) = \Z /m$, the induced map on $E_2$-terms of
twisted $K$-cohomology Serre spectral sequences
is a $3$-fold suspension of
\beg{etwee8}{(\Z/m)[u,u^{-1}][y]\r (\Z/m)[u,u^{-1}][y]/(y^{\ell+1}).}
Here $u$ is the Bott class.  Clearly,  the spectral sequences collapse, so
\rref{etwee8}  induces an isomorphism between associated graded
objects to a (finite) filtration on \rref{etwee6}, and hence \rref{etwee6}
is an isomorphism. 

\vspace{3mm}
Thus, we are reduced to computing the $p$-completion $R(m,\ell)$
of the left hand side
of \rref{etwee6}. Let us assume that $p^i|| m$, $i\geq 1$.
The key point is to consider the Eisenstein polynomial
$$\Phi = \Phi_{p, m}=\cform{\prod}{\zeta}{}(y+2-\zeta-\zeta^{-1})$$
where the product is over all $p^i$'th roots of unity $\zeta$
with $Im(\zeta)\geq0$ such that $\zeta$ is not a $p^{i-1}$'st root of
unity. Then multiplication by $\Phi$ defines an embedding
of filtered $\Z_p[y]$-modules
\beg{etwee9a}{R(m/p,\ell)\r R(m,\ell),}
which, on the associated graded objects, is given by multiplication
by $p$ (since $\Phi = p \mod y$).
By induction on $i$, this then determines how multiples by $p$
of the generators
\beg{etwee9}{1,y,\dots,y^{\delta(p,m/p)-1}}
are represented in \rref{etwee8}:
the $p$-multiples the generators \rref{etwee9} 
are in the same filtration degree, and are represented
as $p$-multiples. Multiples of \rref{etwee9} by higher powers
$p^j$
are given by taking the image under the associated graded map of \rref{etwee9a}
of the $p^{j-1}$-multiples of the corresponding generator
\rref{etwee9}
in the associated graded object of the left hand side of \rref{etwee9a}.
Since the associated graded object of the cokernel of \rref{etwee9a}
is therefore annihilated by $p$, on the remaining generators
$$y^{\delta(p,m/p)},\dots,y^{\delta(p,m)-1}$$
multiplication by $p$ is simply represented by the reduction
modulo $p$ of $\Phi$, which is 
$$y^{\delta(p,m)-\delta(p,m/p)} = y^{p^{i - 1}/2}.$$
Recording these extensions in closed form gives the statement
of the theorem. 
\qed

It turns out that actually fully determining the twisted $K$-theory of
$L\H P^\ell$ is subtle, and seems to require full use of
the string topology product, and even then, the extensions do
not seem to follow as simple a pattern as in the case
of $Y(\ell,1)$. A complete answer will be postponed to
Section \ref{operations_section}. From a mere existence of the product,
however, we can now state the following

\begin{theorem}
\label{t1a}
\beg{et1a}{K^{\tau}_{1}(L\H P^{\ell})=K_{\tau}^{0}( L\H P^\ell)=0,
}
There exists a decreasing filtration on $K^{\tau}_{0}(L\H P^{\ell})$
such that the associated graded object is
\beg{et1b}{E_0K^{\tau}_{0}(L\H P^{\ell})=K^{\tau}_{0}(Y(\ell,1))\otimes \Z[t].
}
Similarly, there is an increasing filtration on $K_{\tau}^{1}(L\H P^\ell)$
such that the associated graded object is
\beg{et1c}{E_0K_{\tau}^{1}(L\H P^\ell)=Hom_\Z(\Z[t],K^{\tau}_{0}(Y(\ell,1))).}
\end{theorem}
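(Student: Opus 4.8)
The plan is to compare $L\H P^\ell$ with $Y(\ell,1)$ through the fibration that connects them, using the loop product to organise the outcome. Since $\H P^\ell\to\H P^\infty=BSp(1)$ classifies the principal $Sp(1)$-bundle $S^{4\ell+3}\to\H P^\ell$, its homotopy fibre is $S^{4\ell+3}$; applying $L(-)$ and keeping track of basepoints, the natural map $\pi\colon L\H P^\ell\to Y(\ell,1)$ is a fibration with fibre $\Omega S^{4\ell+3}$. The twisting on $L\H P^\ell$ is pulled back from $L\H P^\infty$ through $Y(\ell,1)$, hence equals $\pi^*\sigma$ for the twisting $\sigma$ on $Y(\ell,1)$ of Theorem \ref{t1}, and restricts to $0$ on the fibre (as $H^3(\Omega S^{4\ell+3})=0$). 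Finally $\Omega S^{4\ell+3}=\Omega\Sigma S^{4\ell+2}$, and the James splitting $\Sigma\Omega S^{4\ell+3}\simeq\bigvee_{k\geq1}\Sigma S^{k(4\ell+2)}$ gives $K_0(\Omega S^{4\ell+3})=\Z[t]$, free with $t$ of Atiyah--Hirzebruch filtration $4\ell+2$ (so all even), and $K_1(\Omega S^{4\ell+3})=0$; dually $K^0=\mathrm{Hom}_\Z(\Z[t],\Z)$, $K^1=0$.

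I would then filter $L\H P^\ell$ over $Y(\ell,1)$ by the fibrewise James filtration $L^{(0)}=Y(\ell,1)\subset L^{(1)}\subset\cdots$, $L\H P^\ell=\mathrm{colim}_k L^{(k)}$. Each subquotient $L^{(k)}/L^{(k-1)}$ is the fibrewise Thom space over $Y(\ell,1)$ of the $k$-fold Whitney sum of a rank-$(4\ell+2)$ bundle; this bundle is $K$-orientable by the symplectic arguments of Section \ref{s2} (the same ones showing $Y(\ell,1)$ is $K$-orientable), so the Thom isomorphism identifies $\tilde K^\tau_*(L^{(k)}/L^{(k-1)})$ with $K^\tau_*(Y(\ell,1))$, which by Theorem \ref{t1} is concentrated in degree zero, equal to $K^\tau_0(Y(\ell,1))$. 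Feeding this into the long exact sequences of the pairs $(L^{(k)},L^{(k-1)})$ and using $K^\tau_1(Y(\ell,1))=0$, induction on $k$ gives $K^\tau_1(L^{(k)})=0$ and short exact sequences $0\to K^\tau_0(L^{(k-1)})\to K^\tau_0(L^{(k)})\to K^\tau_0(Y(\ell,1))\to0$; in the colimit this produces $K^\tau_1(L\H P^\ell)=0$ and an exhausted filtration of $K^\tau_0(L\H P^\ell)$ with associated graded $\bigoplus_{k\geq0}K^\tau_0(Y(\ell,1))=K^\tau_0(Y(\ell,1))\otimes\Z[t]$, which is \rref{et1b}. This filtration is, up to reindexing, the decreasing $(t)$-adic filtration for the $\Z[t]=K_0(\Omega S^{4\ell+3})$-module structure on $K^\tau_0(L\H P^\ell)$ coming from the fibrewise action of $\Omega S^{4\ell+3}$ — which is the restriction of the loop product of Section \ref{operations_section}, and the reason the theorem is stated once that product is in hand. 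The remaining assertions $K_\tau^0(L\H P^\ell)=0$ and \rref{et1c} come by dualising (colimits becoming limits, $\Z[t]$ its continuous dual $\mathrm{Hom}_\Z(\Z[t],-)$); the only extra point is the vanishing of $\lim^1$, automatic since $L\H P^\ell$ has finitely many cells in each dimension.

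The main obstacles are structural rather than computational. First, $\pi$ need not split globally, so setting up the fibrewise James filtration and the $\Z[t]$-action rigorously is where the loop product of Section \ref{operations_section} is really needed, and one must establish the $K$-orientability of the bundle whose Thom spaces occur using the symplectic geometry of Section \ref{s2}. Second, the statement is phrased in terms of the genuine group $K^\tau_0(Y(\ell,1))$, not merely its Atiyah--Hirzebruch associated graded; this is why the Thom-isomorphism / long-exact-sequence argument above — which sees $K^\tau_0(Y(\ell,1))$ as a whole, extensions included — is preferable to reading off $E_\infty$ of the skeletal Atiyah--Hirzebruch spectral sequence of $L\H P^\ell$, whose $E_\infty$-page would only produce, at each power of $t$, the associated graded of $Y(\ell,1)$'s own spectral sequence — for instance a direct sum of copies of $\Z/3$ when $m=3$, in place of the genuine cyclic group $K^\tau_0(Y(\ell,1))\cong\Z/3^{\ell+1}$. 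Finally, I stress that this argument determines only the associated graded object: identifying the extensions inside $K^\tau_0(L\H P^\ell)$ itself is genuinely harder and is carried out, via the full loop product, in Section \ref{operations_section}.
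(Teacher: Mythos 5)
Your argument is essentially the paper's: the proof there likewise takes the fibration $\Omega S^{4\ell+3}\to L\H P^\ell\to Y(\ell,1)$, runs the associated twisted $K$-homology spectral sequence, observes that the $E^2$-term is concentrated in even degrees (by Theorem \ref{t1} and the evenness of $K_*\Omega S^{4\ell+3}=\Z[t]$) and hence collapses, and deduces \rref{et1c} from the universal coefficient theorem. Your fibrewise James filtration with Thom isomorphisms is a hands-on construction of that same spectral sequence, and your remark that its graded pieces are the genuine groups $K^\tau_0(Y(\ell,1))$ rather than their Atiyah--Hirzebruch associated graded is precisely what makes \rref{et1b} come out in the form stated.
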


\Proof
We have a canonical fibration
\beg{etweet10}{\Omega S^{4\ell+3}\r L\H P^\ell \r Y(\ell,1)
}
where the second map is the projection.
Consider the associated spectral sequence in twisted $K$-homology.
The $E^2$-term then is concentrated in even degrees, and hence
the spectral sequence collapses. 
On the other hand, the second map in \rref{etweet10}, as we will see in
Section \ref{operations_section}, is a map of rings, and in fact the entire
spectral sequence is a spectral sequence of rings. It then follows that
$K^{\tau}_{0}L\H P^\ell$ is generated, as a ring, by two generators
$y$ and $t$ where $y$ is as in \rref{etwee6}, and $t$ is the generator
of 
$$K_0\Omega S^{4\ell+3}=\Z[t].$$
Further, we see that $K^{\tau}_{0}L\H P^\ell$ satisfies the relation $y^{\ell+1}$,
and a relation which is congruent to $\sigma^{m-1}(y)$ modulo $(t)$.
The first statement follows. The second statement now follows from
the universal coefficient theorem.
\qed

\section{On the case $n>1$.}
\label{s2}

There are a number of spectral sequences we may attempt to use for
calculating the twisted $K$-theory of $LG_{Sp}(\ell,n)$ for $n>1$. Let
$W(\ell,n)$ denote the space of symplectic $n$-frames in $\H^{\ell+n}$. The
most promising spectral sequence seems to be the twisted $K$-theory
Serre spectral sequence associated with the fibration
\beg{efib1}{\Omega W(\ell,n) \r LG_{Sp}(\ell,n) \r Y(\ell,n).
}
Here
\beg{efib2}{Y(\ell,n)=LBSp(n)\times_{BSp(n)}G_{Sp}(\ell,n).
}
The reason this is advantageous is that $Y(\ell,n)$ is a homotopy equivalent to a finite-dimensional
manifold, namely a fiber bundle over $G_{Sp}(\ell,n)$ with fiber $Sp(n)$
(although not a principal bundle). Additionally, the manifolds $Y(\ell,n)$
are orientable with respect to $K$-theory because of the following
result:

\begin{lemma}
\label{lkk1}
The $K$-theory (or ordinary) homology or cohomology spectral sequences associated
to the fibrations
\beg{elkk1}{Sp(n)\r Y(\ell,n)\r G_{Sp}(\ell,n),
}
\beg{elkk2}{Sp(n)\r LBSp(n)\r BSp(n)
}
collapse to their respective $E^2$ (resp. $E_2$) terms.
\end{lemma}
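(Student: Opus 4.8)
The plan is to reduce both collapse statements to a single computation about the cohomology of $Sp(n)$ and the action of the relevant base. Recall that $H^*(Sp(n);\Z)$ is an exterior algebra $\Lambda_\Z(a_3, a_7, \dots, a_{4n-1})$ on generators in odd degrees $4k-1$, $k=1,\dots,n$, and that $H^*(BSp(n);\Z) = \Z[p_1,\dots,p_n]$ with $p_k$ in degree $4k$. For the fibration \rref{elkk2}, the point is that $LBSp(n) \simeq BSp(n)^{S^1}$ maps to $BSp(n)$ by evaluation, with a section given by constant loops; so the Serre spectral sequence $H^*(BSp(n); H^*(Sp(n))) \Rightarrow H^*(LBSp(n))$ has a section-induced splitting on the base edge, and the issue is whether the transgression (or any higher differential) hits the exterior generators $a_{4k-1}$. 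I would argue it does not: the only possible target of $d_{4k}(a_{4k-1})$ is a decomposable-free multiple of $p_k$, but the existence of the section forces the fiber-inclusion $H^*(LBSp(n)) \to H^*(Sp(n))$ to be surjective (equivalently, one can use that $BSp(n)$ is simply connected and rationally formal, or invoke the standard fact that for $G$ a compact Lie group the evaluation fibration $G \to LBG \to BG$ has collapsing Serre spectral sequence because $LBG \simeq EG \times_G G$ and one has the conjugation-equivariant decomposition). Since everything is torsion-free here, collapse over $\Z$ follows from collapse over $\Q$ together with the fact that all the groups in sight are free abelian. The $K$-theory version is then immediate from the Atiyah--Hirzebruch comparison: $K^*(Sp(n))$ is again exterior and even-to-odd, so the $K$-theory Serre spectral sequence of \rref{elkk2} has $E_2$ concentrated so that its differentials would be detected by the ordinary ones, which vanish.

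For the fibration \rref{elkk1}, $Sp(n) \to Y(\ell,n) \to G_{Sp}(\ell,n)$, I would exploit that $Y(\ell,n)$ is pulled back from \rref{elkk2} along the classifying map $c: G_{Sp}(\ell,n) \to BSp(n)$ of the tautological bundle: indeed \rref{efib2} gives $Y(\ell,n) = LBSp(n) \times_{BSp(n)} G_{Sp}(\ell,n)$, which is exactly the pullback of $LBSp(n)\to BSp(n)$ along $c$. Naturality of the Serre spectral sequence then reduces the collapse for \rref{elkk1} to the collapse for \rref{elkk2}, once one knows the map on $E_2$-pages is onto in the relevant range — and it is, because $c^*: H^*(BSp(n)) \to H^*(G_{Sp}(\ell,n))$ is surjective (the cohomology of the symplectic Grassmannian is generated by symplectic Pontryagin classes of the tautological and complementary bundles, and in particular the $p_k$ of the tautological bundle are in the image). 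Concretely: a differential in the spectral sequence of \rref{elkk1} emanating from a fiber class $a_{4k-1}$ would, by naturality, be the $c^*$-image of the corresponding differential for \rref{elkk2}, which is zero. Since $c^*$ on fiber terms is an isomorphism (the fiber $Sp(n)$ is literally the same), no differential can originate at the fiber, and as the fiber generators multiplicatively generate $E_2$ over the base, the whole spectral sequence collapses. Again the $K$-theory statement follows by the same torsion-free Atiyah--Hirzebruch comparison, or directly by the analogous argument with $K^*$ in place of $H^*$.

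The main obstacle I anticipate is making the surjectivity/section argument for \rref{elkk2} fully rigorous over $\Z$ rather than merely rationally, i.e. ruling out a ``fractional'' transgression $d_{4k}(a_{4k-1}) = \lambda_k p_k$ with $\lambda_k \in \Z\setminus\{0\}$ that happens to vanish rationally only because we inverted primes — but this cannot occur precisely because the evaluation map admits the section by constant loops, which splits the edge homomorphism $H^*(BSp(n)) \hookrightarrow H^*(LBSp(n))$ integrally, forcing $p_k$ to survive to $E_\infty$ and hence not to be a transgression target. The cleanest way to package this is the homotopy equivalence $LBG \simeq EG\times_G G$ (with $G=Sp(n)$ acting by conjugation) together with the observation that the resulting bundle over $BG$ has, after one forgets the group action, the structure needed for the Leray--Hirsch theorem: $H^*(Sp(n);\Z)$ is free and its classes extend over $LBSp(n)$ because the conjugation action of $Sp(n)$ on $H^*(Sp(n);\Z)$ is trivial (it is trivial on $H^*$ of any connected Lie group). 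Leray--Hirsch then gives collapse of \rref{elkk2} directly, and pullback along $c$ finishes \rref{elkk1}; the $K$-theory versions are handled identically using that $K^*(Sp(n))$ is likewise a free module with trivial conjugation action and a twisting that is trivial on the fiber in the sense needed for the $K$-theoretic Leray--Hirsch.
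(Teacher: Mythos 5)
Your reduction of \rref{elkk1} to \rref{elkk2} by naturality along the pullback square over $c:G_{Sp}(\ell,n)\to BSp(n)$ is exactly the paper's first step (the paper phrases it as surjectivity on cohomology $E_2$-terms, injectivity on homology $E^2$-terms). The problem is the second step, the collapse for \rref{elkk2} itself, where your argument has a genuine gap. A section of $p:LBSp(n)\to BSp(n)$ makes $p^*$ split injective, hence shows that no differential can land in the bottom row $E_2^{*,0}$; it does \emph{not} force the fiber restriction $H^*(LBSp(n))\to H^*(Sp(n))$ to be surjective, and it does not exclude differentials off the bottom row (for $n\geq 10$ one has a priori targets such as $d_4(a_{39})=\lambda\, p_1a_3a_7a_{11}a_{15}$, which your ``only possible target is a multiple of $p_k$'' claim overlooks). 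The paper's own computation of $H_*(L\H P^\ell)$ is a counterexample to the principle you are invoking: $\Omega\H P^\ell\to L\H P^\ell\to\H P^\ell$ has the constant-loop section, yet its Serre spectral sequence has the nonzero differentials $d(t^jy_\ell u)=(\ell+1)t^{j+1}$. Likewise, triviality of the conjugation action on $H^*(Sp(n))$ only gives simple coefficients in the Borel-construction spectral sequence; it is not the Leray--Hirsch hypothesis. (The translation action of $S^1$ on itself is also trivial on cohomology, yet $S^1\times_{S^1}ES^1=ES^1$ and the spectral sequence of $S^1\to ES^1\to BS^1$ is maximally non-collapsing.) So each of your proposed justifications for the extendability of the fiber classes is either false as a general principle or amounts to invoking the statement to be proved.

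The missing idea, which is how the paper closes this gap, is to push the comparison one step further: map \rref{elkk2} into its $n=\infty$ analogue $Sp\to LBSp\to BSp$ (again a surjection on $E_2$, since $H^*(BSp)\to H^*(BSp(n))$ and $H^*(Sp)\to H^*(Sp(n))$ are onto). For $n=\infty$ this is a fibration of infinite loop spaces with infinite loop maps, and the constant-loop section then splits it as a \emph{product} $LBSp\simeq Sp\times BSp$ of infinite loop spaces, for which collapse is automatic in any cohomology theory; finite $n$ and \rref{elkk1} follow by the naturality argument you already have. If you prefer to stay at finite $n$, you must actually produce the extension of the exterior generators to $H^*(LBSp(n))$ (equivalently, prove surjectivity of the fiber restriction), e.g.\ by restricting the classes from $LBSp$ after the splitting above --- but some such input beyond the section and the triviality of the conjugation action is indispensable. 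Your handling of the $K$-theoretic version (torsion-free $E_2$, so the first nonzero differential is detected rationally and hence in ordinary cohomology) is fine and matches the technique the paper uses elsewhere.
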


\Proof
The fibration \rref{elkk1} maps in an obvious way into \rref{elkk2} which
induces an injection on $E^2$ (resp. surjection on $E_2$) terms of the spectral
sequences in question. Thus, it suffices to consider \rref{elkk2}. For the
same reason, it suffices to consider $n=\infty$ in \rref{elkk2}. But for
$n=\infty$, \rref{elkk2} is a fibration of infinite loop spaces with the
maps infinite loop maps. Since \rref{elkk2} splits, it is therefore a product
in this case, which implies the desired collapse.
\qed

We may therefore expect to use Poincar\'e duality
together with the canonical comparison map
\beg{efib3}{Y(\ell,n)\r LBSp(n)}
to help calculate the twisted $K$-theory Atiyah-Hirzebruch spectral
sequence for $Y(\ell,n)$ (see the next section for an example).

\vspace{3mm}
Unfortunately,
the twisted AHSS for $LBSp(n)$ is extremely tricky, even though
we know its target by \cite{fht}. If this calculation can
be done, though, we can solve the spectral sequence of \rref{efib1}
by the following discussion. First note that by the Eilenberg-Moore
spectral sequence, $H^*\Omega W(\ell,n)$ is the divided power algebra
on bottom generators in dimensions
$$4\ell+2, 4\ell+6,\ldots,4\ell+4n-2.$$
Since the dimensions are all even, the AHSS for $\Omega W(\ell,n)$ must
collapse (and besides, there is no twisting when $\ell>1$ by connectivity).
The spectral sequence is completely determined as the tensor product
of the twisted AHSS for $Y(\ell,n)$ and $H^*\Omega W(\ell,n)$ by the following
result.

\begin{theorem}
\label{etcoll}
For $\ell\geq n$, the twisted Serre spectral sequence of the fibration \rref{efib1}
is a spectral sequence of $H^*\Omega W(\ell,n)$-modules.
\end{theorem}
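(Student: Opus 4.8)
The plan is to produce a \emph{fiberwise} action of the grouplike $H$-space $\Omega W(\ell,n)$ on the total space $LG_{Sp}(\ell,n)$ of \rref{efib1}, covering the identity of $Y(\ell,n)$ and restricting on each fiber to the canonical $\Omega W(\ell,n)$-torsor structure, and then to invoke the standard principle that such an action makes the associated Serre spectral sequence into a spectral sequence of modules over the cohomology of the fiber --- this is the very mechanism by which the Serre spectral sequence of a principal $G$-bundle is a module spectral sequence over $H^*(G)$. There is no clash with the twisting: since $\ell\geq n\geq 1$ the fiber $\Omega W(\ell,n)$ is $(4\ell+1)$-connected with $4\ell+1\geq 5$, so $\tau$ restricts to $0$ on it, $K^*_\tau(\Omega W(\ell,n))=K^*(\Omega W(\ell,n))$ is an honest $H^*(\Omega W(\ell,n))$-module, and the twist on $LG_{Sp}(\ell,n)$ is pulled back from $Y(\ell,n)$, so the module mechanism goes through verbatim with twisted coefficients.

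To build the action I would first recast \rref{efib1}. Applying the free loop functor to $W(\ell,n)\to G_{Sp}(\ell,n)\xrightarrow{c}BSp(n)$ gives a fibration $LW(\ell,n)\to LG_{Sp}(\ell,n)\xrightarrow{Lc}LBSp(n)$, while the projection $Y(\ell,n)\to LBSp(n)$ (forget the $G_{Sp}(\ell,n)$-coordinate) is a fibration with fiber the homotopy fiber of $c$, namely $W(\ell,n)$. The comparison map \rref{efib3}, evaluation-at-the-basepoint together with $Lc$, is a map $LG_{Sp}(\ell,n)\to Y(\ell,n)$ over $LBSp(n)$ whose restriction to fibers is the evaluation map $\mathrm{ev}\colon LW(\ell,n)\to W(\ell,n)$; its homotopy fiber is $\Omega W(\ell,n)$, re-identifying \rref{efib1}. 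Thus \rref{efib1} is, fiberwise over $LBSp(n)$, the evaluation fibration of the family of copies of $W(\ell,n)$ carried by $Y(\ell,n)\to LBSp(n)$. Loop concatenation makes $\mathrm{ev}\colon LW(\ell,n)\to W(\ell,n)$ a fiberwise grouplike $H$-space with based-loop subobject $\Omega W(\ell,n)$; equivalently, the bundle of based loop spaces associated to the principal bundle $W(\ell,n)\to G_{Sp}(\ell,n)$ acts fiberwise on $LG_{Sp}(\ell,n)$ over $Y(\ell,n)$, restricting to the torsor action on fibers. The theorem is therefore the assertion that this bundle of based loop spaces can be trivialized, so that the action is by the constant group $\Omega W(\ell,n)$.

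This is where $\ell\geq n$ enters. The cells of $W(\ell,n)=Sp(\ell+n)/Sp(\ell)$ lie in dimensions $4\ell+3,4\ell+7,\dots,4\ell+4n-1$, and $2(4\ell+3)>4\ell+4n-1$ exactly when $\ell\geq n-1$; in this metastable range $W(\ell,n)$ is homotopy equivalent to the product of odd spheres $S^{4\ell+3}\times S^{4\ell+7}\times\cdots\times S^{4\ell+4n-1}$ (the classical metastable splitting of quaternionic Stiefel manifolds). Since the evaluation fibration of an odd sphere is fiber-homotopy trivial, $LS^{2k+1}\simeq S^{2k+1}\times\Omega S^{2k+1}$ over $S^{2k+1}$, the evaluation fibration $\mathrm{ev}\colon LW(\ell,n)\to W(\ell,n)$ is fiber-homotopy trivial, and hence so is the associated bundle of based loop spaces over $G_{Sp}(\ell,n)$: its classifying map factors through $Sp(n)\to\mathrm{hAut}(W(\ell,n))\to\mathrm{hAut}(\Omega W(\ell,n))$, and because $Sp(n)$ is connected this lands in the identity component, which in the metastable range leaves no room for it to be essential. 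With that bundle trivialized, the action of the previous paragraph becomes an action of the constant group $\Omega W(\ell,n)$ on $LG_{Sp}(\ell,n)$ over $Y(\ell,n)$; in fact \rref{efib1} becomes fiber-homotopy equivalent to the trivial fibration, which both proves the theorem and yields the description of its spectral sequence as the tensor product of the twisted Atiyah--Hirzebruch spectral sequence of $Y(\ell,n)$ with $H^*\Omega W(\ell,n)$ announced above (module structure over $H^*\Omega W(\ell,n)$ forces the fiber classes to be permanent cycles, since they are $H^*\Omega W(\ell,n)$-multiples of the unit).

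The step I expect to be the main obstacle is the last one. The homotopy equivalence $W(\ell,n)\simeq S^{4\ell+3}\times\cdots\times S^{4\ell+4n-1}$ and the ensuing fiber-homotopy triviality of $\mathrm{ev}$ are comparatively routine in the metastable range, but one must control the \emph{equivariance} of the splitting --- its compatibility with the $Sp(n)$-twisting built into $Y(\ell,n)$ and with loop concatenation --- in order to descend it to an honest fiberwise action over all of $Y(\ell,n)$; this bookkeeping, not the splitting itself, is the real content. If one prefers to bypass it, the alternative is to prove directly, by an obstruction-theoretic count in the metastable range, that the bundle of based loop spaces over $G_{Sp}(\ell,n)$ attached to $W(\ell,n)\to G_{Sp}(\ell,n)$ is fiber-homotopy trivial; since that bundle already acts fiberwise on \rref{efib1}, the constant-group action, and with it the module structure, follows at once.
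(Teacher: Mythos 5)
Your opening idea --- obtaining the module structure from a fiberwise action of (a bundle of) based loop spaces on $LG_{Sp}(\ell,n)$ over $Y(\ell,n)$ --- is reasonable in outline, but the trivialization on which everything rests fails at two concrete points. First, $W(\ell,n)=Sp(\ell+n)/Sp(\ell)$ is \emph{not} homotopy equivalent to $S^{4\ell+3}\times\cdots\times S^{4\ell+4n-1}$ just because $\ell\geq n-1$: the inequality $2(4\ell+3)>4\ell+4n-1$ yields a \emph{stable} (James-type) splitting, not an unstable product decomposition. Already for $n=2$ the sphere bundle $S^{4\ell+3}\to Sp(\ell+2)/Sp(\ell)\to S^{4\ell+7}$ has characteristic element in $\pi_{4\ell+6}(S^{4\ell+3})\cong\Z/24$ which is nonzero for general $\ell$, so the Stiefel manifold is not even fiber homotopy equivalent to a product; when quaternionic Stiefel manifolds split as products of spheres is a classical and delicate question with a generically negative answer. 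Second, the evaluation fibration $LS^{2k+1}\to S^{2k+1}$ is fiber homotopy trivial only for $S^1$, $S^3$, $S^7$: a trivialization forces the conjugation action of $\Omega S^{2k+1}$ on itself to be inessential on bottom cells, i.e.\ forces the Samelson product $\langle\iota,\iota\rangle$, equivalently the Whitehead product $[\iota,\iota]\in\pi_{4k+1}(S^{2k+1})$, to vanish, and by Adams this happens only in those three cases. Since the spheres you need are $S^{4\ell+3},S^{4\ell+7},\dots$, this already fails for $n=1$, $\ell\geq 2$. Finally, the claim that connectivity of $Sp(n)$ ``leaves no room'' for the classifying map into $B\mathrm{hAut}(\Omega W(\ell,n))$ to be essential is not an argument: the relevant source is $G_{Sp}(\ell,n)$ or $BSp(n)$, and the identity component of $\mathrm{hAut}(\Omega W(\ell,n))$ has homotopy in arbitrarily many degrees, so no range count applies.

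The paper's proof avoids any geometric splitting. It uses the diagonal $LG_{Sp}(\ell,n)\to LG_{Sp}(\ell,n)\times LG_{Sp}(\ell,n)$, with twisting $(0,\tau)$ on the product, so that the module statement reduces to the collapse of the \emph{untwisted} $K$-cohomology Serre spectral sequence of \rref{efib1}; torsion-freeness of its $E_2$-term reduces this further to collapse of the rational cohomology Serre spectral sequence, which is proved by transgression and comparison arguments involving \rref{efib100}, \rref{efib101} and the $n=\infty$ case \rref{eetcol1}. The hypothesis $\ell\geq n$ enters only through the factorization claim \rref{etclaim}, not through any metastable range. If you want to rescue your approach, the correct (and much weaker) target is exactly the one the paper aims at: showing the fiber classes of $H^*\Omega W(\ell,n)$ are permanent cycles in the untwisted spectral sequence; a fiber homotopy trivialization of \rref{efib1} is neither available nor required.
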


\Proof
We will use the diagonal map
from \rref{efib1} to its product with itself for the module structure.
On the product, we can take the twisting trivial on one factor,
and equal to the given twisting on the other. Thus, it suffices to
show that the {\em untwisted} $K$-theory cohomology Serre spectral
sequence associated with the fibration \rref{efib1} collapses to the
$E_2$-term. Supposed this is not the case. Since the $E_2$-term is
torsion free, the first non-trivial differential must also appear in the corresponding
$K\mathbb{Q}$-cohomology Serre spectral sequence, and hence in the ordinary
cohomology spectral sequence. So, we must prove that the ordinary rational
cohomology Serre spectral sequence associated with \rref{efib1} collapses.

To this end, first note that for $\ell>>n$, the ordinary cohomology Serre sequence
associated with the fibration
\beg{efib100}{\Omega W(\ell,n) \r \Omega G_{Sp}(\ell,n) \r Sp(n)
}
collapses. Indeed, \rref{efib100} is a principal fibration (the next term
to the right is $W(\ell,n)$) so the corresponding homology spectral sequence is
a spectral sequence of $H_*\Omega W(\ell,n)$-modules, but the elements $H_*Sp(n)$
are permanent cycles, since they have no possible target). Now consider the
Serre spectral sequence in ordinary cohomology associated with the fibration
\beg{efib101}{\Omega G_{Sp}(\ell,n)\r L G_{Sp}(\ell,n)\r G_{Sp}(\ell,n).
}
We claim that this also collapses to $E_2$. Indeed, map into the
corresponding Serre spectral
sequence \rref{elkk2}. By Lemma \ref{lkk1}, this collapses, so
it suffices to show that the polynomial generators of
$H^*(\Omega W(\ell,n),\mathbb{Q})$ are
permanent cycles in the cohomology Serre spectral sequence associated with \rref{efib101}.
For example, we may map \rref{efib1} to the case of $n=\infty$, in which case
we get
\beg{eetcol1}{\Omega Sp/Sp(\ell)\r LBSp(\ell) \r LBSp\times_{BSp}BSp(\ell).
}
By mapping into \rref{eetcol1} the case $\ell=0$, which is the fibration
\beg{eetcol2}{\Omega Sp \r *\r Sp,
}
and taking ordinary homology Serre spectral sequences, we know that for \rref{eetcol2},
the exterior generators of $H_*Sp$ transgress to the corresponding polynomial
generators of the homology of the fiber. 
This implies that in \rref{eetcol1}, the exterior generators of $H_*Sp$ of
dimensions $4\ell+3, 4\ell+7,...$ transgress and the ones in dimensions $3,...,4\ell-1$
are permanent cycles. This shows that the $E_\infty$ term of the homology
Serre spectral sequence of \rref{eetcol1} is $H_*Sp(\ell)\otimes H_*BSp(\ell)$,
all on the horizontal line. 

Now dualizing, we see that in the $H\mathbb{Q}^*$-cohomology Serre spectral
sequence of \rref{eetcol1}, the polynomial generators of
$H^*(\Omega Sp/Sp(\ell),\mathbb{Q})$ transgress. Hence, the same is true of the
polynomial generators of $H^*(\Omega W(\ell,n),\mathbb{Q})$ in the
rational cohomology Serre spectral sequence of \rref{efib1}.
What we need to show is that the transgressions of the generators
of $H^*(\Omega W(\ell,n),\mathbb{Q})$ to $H^*(Y(\ell,n),\mathbb{Q})$ are $0$.
To this end, we claim:
\beg{etclaim}{\parbox{3.5in}{For $n\leq \ell$, the map induced on homology by
the natural inclusion $Y(\ell,n)=LBSp(n)\times_{BSp(n)}G_{Sp}(\ell,n)\r LBSp
\times_{BSp}BSp(\ell)$ factors through the map induced in homology
by the natural inclusion $LBSp(\ell)\r LBSp\times_{BSp}BSp(\ell)$.}
}
Note that if \rref{etclaim} is proved, then the same statement is true
on rational cohomology by the universal coefficient theorem. Then,
the images of the transgressions of the generators of $H^*(\Omega Sp/Sp(\ell),\mathbb{Q})$
map to $0$ in $H^*(Y(\ell,n),\mathbb{Q})$, since they certainly map to
$0$
in $H^*(LBSp(\ell),\mathbb{Q})$ by the edge map theorem. This will conclude
the proof of our theorem.

\vspace{3mm}
To prove \rref{etclaim}, note that we can further compose with the
map $LBSp
\times_{BSp}BSp(\ell)\r LBSp$, since this map is injective on homology.
But the natural inclusion
$$Y(\ell,n)=LBSp(n)\times_{BSp(n)}G_{Sp}(\ell,n)\r LBSp$$
certainly factors through the inclusion
\beg{elbspk}{LBSp(n)\r LBSp.}
While note that
this is induced in our setup by a map $Sp(n)\r Sp(\ell)$ induced by
a quaternion-linear map $\H^n\r \H^\infty$ with the set of coordinates
disjoint from those involved in the inclusion $\H^\ell\r\H^\infty$
involved in the inclusion $Sp(\ell)\r Sp$ which induces the map $LBSp(\ell)\r LBSp$
involved in the statement of \rref{etclaim}, nevertheless
we have $n\leq \ell$, so the map \rref{elbspk} factors through a map
induced by {\em some} inclusion $Sp(\ell)\r Sp$ induced by inclusion
of coordinates, and any two such maps are homotopic as maps of group.
The statement of \rref{etclaim} follows.
\qed

\vspace{3mm}
\section{The representation level $1$ symplectic Verlinde algebra} \label{s3}

Let us consider the case of twisting level $n+2$ for $Sp(n)$ (representation level $1$).
The advantage is that in this case, we know by rank-level duality that the Verlinde
algebra is isomorphic to the Verlinde algebra for $Sp(1)$ at the same
twisting level (i.e. representation level $n$). More explicitly, the
generators of the Verlinde algebra are the fundamental (=level $1$)
representations of $Sp(n)$. To this end, we have a ``defining'' representation
of dimension $2n$, which we will for the moment denote by $x$. Then
the other fundamental representations are
\beg{ek1}{v_i=\Lambda^i(x)-\Lambda^{i-2}(x),\; i=2,...,n.
}
Note that there is a canonical contraction map $\Lambda^i\r \Lambda^{i-2}$.
It turns out that $xv_i(x)$ for $i\geq 1$ (we put $v_0=1$, $v_1=x$)
contains $v_{i-1}$ and $v_{i+1}$ as subrepresentations, and their complement
is an irreducible representation of level $2$. This gives the relation
\beg{ek2}{v_{i}x=v_{i+1}+v_{i-1}, i=1,...,n-1, \; v_{n}x=v_{n-1}.}
There are more level $2$ representations, but the corresponding relations
are redundant.

We see immediately that \rref{ek2} implies
\beg{ek3}{v_{i}=Sym^{i}(x)
}
where $Sym^i$ are the polynomials from Section \ref{s1}. The Braun-Douglas
number $d(n)=d(n+2,n)$ is the greatest common divisor of the differences of dimension
of the left and right hand side of each relation \rref{ek2}. This number
is contained in the augmentation ideal of the Verlinde algebra. Perhaps
surprisingly, it turns out to be very small, making the completion trivial
in most cases. We will, again, prove a generalization of the following Theorem
in Section \ref{scomp} below.

\begin{theorem}
\label{tk1}
When $n\geq 2$, we have $d(n)=2$ when $n=2^\ell-2$, and $d(n)=1$ else.
\end{theorem}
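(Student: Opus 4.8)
\Proofs
The first thing I would do is unwind what the numbers being gcd'd actually are. Written in $R(Sp(n))$ itself, the $i$-th relation of \rref{ek2} reads $x\otimes v_i=v_{i+1}\oplus v_{i-1}\oplus V_{\omega_1+\omega_i}$ (with the convention $v_{n+1}=0$), where $V_{\omega_1+\omega_i}$ is the irreducible level-$2$ representation of highest weight $\omega_1+\omega_i$; hence the integer attached to that relation is $\delta_i:=\dim V_{\omega_1+\omega_i}$, and $d(n)=\gcd(\delta_1,\dots,\delta_n)$, with $\delta_1=\dim\mathfrak{sp}(2n)=n(2n+1)$. Using $\dim v_j=\binom{2n}{j}-\binom{2n}{j-2}$ and $\delta_i=2n\dim v_i-\dim v_{i+1}-\dim v_{i-1}$, a routine collapse of binomial coefficients (equivalently, the Weyl dimension formula for $Sp(2n)$, whose $(\lambda+\rho)$-data for $V_{\omega_1+\omega_i}$ is the set $\{1,\dots,n+2\}\setminus\{n+1,\ n+1-i\}$) should give
\beg{etkdelta}{\delta_i=\frac{2(n+1-i)(2n+2-i)}{i+1}\binom{2n+2}{i-1},\qquad 1\le i\le n.}
Everything after this is elementary number theory applied to \rref{etkdelta}, organized as $d(n)=\prod_q q^{\min_i v_q(\delta_i)}$.

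\emph{Step 1: no odd prime divides $d(n)$.} Since $d(n)\mid\delta_1=n(2n+1)$, an odd prime $q\mid d(n)$ divides $n$ or $2n+1$ (not both). If $q\mid n$, reducing \rref{etkdelta} at $i=3$ gives $\delta_3\equiv1\pmod q$ (the factors $n-2,2n-1,n+1,2n+1$ reduce to $-2,-1,1,1$ and $\frac12(-2)(-1)(1)(1)=1$) --- contradiction. If $q\mid 2n+1$ with $q\ge5$, reducing at $i=2$ gives $\delta_2=\frac{8n(n-1)(n+1)}3\equiv1\pmod q$ (use $2n\equiv-1$, so $n(n-1)(n+1)\equiv\frac38$) --- contradiction. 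The only remaining case is $q=3\mid 2n+1$, i.e.\ $n=3t+1$; there $\delta_2\equiv t\pmod 3$, which settles it unless $3\mid t$, and when $3\mid t$ (so $n\ge10$) one computes from \rref{etkdelta} that $\delta_5\equiv1\pmod 3$. Hence $\gcd_i\delta_i$ is a power of $2$.

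\emph{Step 2: the power of $2$.} From \rref{etkdelta}, for $2\le i\le n$,
$$v_2(\delta_i)=1+v_2(n+1-i)+v_2(2n+2-i)-v_2(i+1)+v_2\binom{2n+2}{i-1},$$
while $v_2(\delta_1)=v_2(n)$. If $n$ is odd, $\delta_1$ is odd so $d(n)=1$; note $n+2$ is not a power of $2$. If $4\mid n$, then $v_2(\delta_3)=v_2(n-2)-1=0$ (all of $2n-1,n+1,2n+1$ odd, $n-2\equiv2\pmod4$), so $d(n)=1$, and $n+2\equiv2\pmod4$ is again not a power of $2$. Now let $n\equiv2\pmod4$ and write $n+2=2^bc$ with $b\ge2$, $c$ odd. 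If $c=1$, i.e.\ $n+2=2^b$: then $n+1-i=2^b-(i+1)$ with $0<i+1<2^b$, so $v_2(n+1-i)=v_2(i+1)$, and a Kummer carry-count (via $2n+2=2^{b+1}-2$) gives $v_2\binom{2n+2}{i-1}=v_2(i)$; hence $v_2(\delta_i)=1+v_2(2n+2-i)+v_2(i)\ge1$ for all $2\le i\le n$, while $v_2(\delta_1)=v_2(2^b-2)=1$; so $v_2(d(n))=1$ and, with Step 1, $d(n)=2$. If $c\ge3$: put $i=2^{b+1}-1$, which lies in $[2,n]$ precisely because $c\ge3$; it is odd (so $2n+2-i$ is odd), $v_2(i+1)=b+1$, $v_2(n+1-i)=v_2(2^b(c-2))=b$, and the bits set in $i-1=2^{b+1}-2$ are exactly positions $1,\dots,b$, all of which are set in $2n+2=2(2^bc-1)$, so $\binom{2n+2}{i-1}$ is odd; therefore $v_2(\delta_i)=1+b+0-(b+1)+0=0$ and $d(n)=1$. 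Assembling the cases, $d(n)=2$ exactly when $n+2$ is a power of $2$, i.e.\ $n=2^{\ell}-2$, and $d(n)=1$ otherwise.

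\emph{Main obstacle.} The real work is the second half of Step 2. That $n$ odd or $4\mid n$ already destroys evenness is immediate, but pinning down exactly when \emph{all} of $\delta_2,\dots,\delta_n$ are even forces the split $n+2=2^bc$, and in the awkward sub-case $c\ge3$ one must actually exhibit an odd $\delta_i$: the correct index $i=2^{b+1}-1$ is not obvious, and verifying that $\binom{2n+2}{i-1}$ is then odd is a binary-digit (Kummer) computation. By contrast Step 1 is just reduction of the single closed form \rref{etkdelta} modulo $q$, with the prime $3$ the only mild nuisance.
\qed
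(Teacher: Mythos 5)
Your proof is correct, but it takes a genuinely different route from the paper's. The paper never computes the individual dimensions: it encodes the relations \rref{ek2} in the generating series $s(t)=1/(t^2-tx+1)$, notes that the dimension series of the $v_i$ is $(1-t^{2n+4})s(t)$ evaluated at $x=2n$, and concludes that a prime $p$ divides $d(n)$ precisely when $t^{2n+4}-1=(t^2-2nt+1)(1+t)^{2n+1}(t-1)$ holds over $\F_p$; the theorem then drops out of counting distinct roots of the two sides in $\overline{\F}_p$ (for $p=2$ the right-hand side becomes $(1+t)^{2n+4}$, forcing $2n+4$ to be a power of $2$, while for odd $p$ the left-hand side has too many distinct roots, with a short separate check at $p=3$). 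You instead compute each difference $\delta_i=\dim V_{\omega_1+\omega_i}$ in closed form and chase $p$-adic valuations by hand. The paper's argument is shorter and treats all the relations simultaneously through one polynomial identity; yours is more elementary (no roots of unity over $\overline{\F}_p$) and extracts strictly more information --- an explicit formula for each $\delta_i$ and its exact $2$-adic valuation --- at the price of the case analysis in Step 2 and of having to locate the witness index $i=2^{b+1}-1$ when $n+2$ is not a power of $2$. I checked your closed form against the direct count $\delta_i=2n\dim v_i-\dim v_{i+1}-\dim v_{i-1}$ in a number of cases and verified the key valuation claims (in particular that $v_2\binom{2^{b+1}-2}{i-1}=v_2(i)$, that $\binom{2n+2}{i-1}$ is odd at your witness index when $c\ge 3$, and that $\delta_5\equiv 1\bmod 3$ when $9\mid 2n+1$); these are all sound. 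Two small points worth making explicit in a final write-up: the index $3$ used in Step 1 is available because an odd prime dividing $n$ forces $n\ge 3$, and $\delta_5$ exists in the residual $p=3$ case because $9\mid 2n+1$ forces $n\ge 10$.
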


\Proof
Let $s=\cform{\sum}{i\geq 0}{}Sym^i(x)t^i$ be the generating series
of the polynomials $Sym^i$. Let us recall that
from the recursive relation \rref{ek2}
it follows that
$$sxt=st^2 +s-1,
$$
or
\beg{ekt1}{s=1/(t^2-tx+1).
}
We may think that by \rref{ek3}, this is identified with the generating
series for the $v_i$'s as defined by \rref{ek1}, which is
\beg{ekt2}{(1+t)^{2n}(1-t^2),}
but that is not quite right. The point is, \rref{ekt2} has non-trivial
coefficients also at $t^i$ with $i=n+2,...,2n+2$ which \rref{ekt1} misses.
The correct series equal to \rref{ekt2} is then
\beg{ekt3}{s(t) - t^{2n+2}s(t^{-1})= (1-t^{2n+4})s(t).
}
Thus, if a prime $p$ divides the Braun-Douglas number for $Sp(n)$,
representation level $1$, then
over $\F_p$,
\beg{ekt4}{t^{2n+4}-1 = (t^2-2nt +1)(1+t)^{2n+1}(t-1).
}
First let us note that for $p=2$, the right hand side is $(1+t)^{2n+4}$,
so \rref{ekt4} occurs if and only if $2n+4$ is a power of $2$. To see that
in this case, the Braun-Douglas number cannot be divisible by $4$, consider
the differences of the difference of dimension between the two sides of
\rref{ek2} for $i=1$. Then the left hand side is divisible by $4$,
while the right hand side is $n(2n-1)$, which is not divisible by $4$.

Now let us consider a prime $p\neq 2$. Let $p^j||2n+4$, let $h=(2n+4)/p^j$.
Then $h$ is relatively prime to $p$, so $\overline{\F}_{p}$ actually
has $h$ different roots of $t^h-1$. By looking at the right hand side
of \rref{ekt4}, which has at most $4$ different roots, we have $h\leq 4$.
But $h$ is divisible by $2$, so $h=2,4$. If $j=0$, it is verified by
direct computation that the Braun-Douglas number is $1$. When $j>0$,
we see that the left hand side contains at least $p$ factors of $t-1$,
while the right hand side contains at most $3$. So we would have to have
$p=3$, $j=1$. Thus, $2n+4$ is equal to $6$ or $12$. $6$ gives $n=1$,
which is excluded by assumption. The other case actually gives $6$
copies of $t-1$ on the left hand side of \rref{ekt4}, which cannot occur on the
right hand side.
\qed

\vspace{3mm}
We see therefore that the completion of the Verlinde algebra is a
profound operation which can lose information. In the case $n=2^\ell-2$,
it is also interesting to know the Atiyah-Hirzebruch filtration on
the Verlinde algebra completion. First, we already know that the
representation level $1$ Verlinde algebra for $Sp(n)$ with $n=2^\ell-1$ is
\beg{ext1}{\Z/2[x]/(x^{2^n-1}).
}
Next, we will construct polynomial generators $\gamma_1,...,
\gamma_n$ for the representation
ring $R(Sp(n))$ of $Sp(n)$ such that $\gamma_i$ is in
Atiyah-Hirzebruch filtration $i$. Obviously, for $i=1$, we
can just put
\beg{egm2}{\gamma_1=x-2n.
}
Next, we put
\beg{egm3}{\gamma_{i+1}=
\Lambda^{i+1}(x-2n+2i)-\Lambda^{i-1}(x-2n+2i), \;i=1,...,n-1.
}
The element \rref{egm3} is of filtration degree $\geq i+1$
because it vanishes when restricted to $Sym(i)$ (where it
is equal to $Sym^{i+1}(x^\prime)$, where $x^\prime$
is the bottom level $1$ representation of $Sp(i)$).

\vspace{3mm}
\begin{theorem}
\label{tg1}
The associated graded ring of the representation $1$ level
Verlinde algebra of $Sp(2^r-2)$ with respect of the
Atiyah-Hirzebruch filtration is isomorphic to
\beg{etg1}{\begin{array}{l}\Z[\gamma_1,...,\gamma_{2^{r}-2}]/(2,\gamma_2+\gamma_{1}^{2},...
,\\
\gamma_{2^{r-1}-1}+\gamma_{1}^{{2^{r-1}-1}},\gamma_{2^{r-1}+1}+
\gamma_{2^{r-1}}\gamma_1,...
,\gamma_{2^r-2}+\gamma_{2^{r-1}}\gamma_{1}^{2^{r-1}-2},
\gamma_{2^{r-1}}\gamma_{1}^{2^{r-1}-1})\end{array}
}
The element $2$ is represented by
\beg{etg2}{\gamma_{2^{r-1}}+\gamma_{1}^{2^{r-1}}.}
\end{theorem}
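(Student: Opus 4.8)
The plan is to compute the associated graded of the completed Verlinde algebra $\hat V:=(V(n+2,n))^{\wedge}_I$ (here $n=2^r-2$, so $m:=n+2=2^r$) by running two calculations in parallel and matching them against each other: a purely algebraic one that identifies $\hat V$ as a ring, and the twisted Atiyah-Hirzebruch spectral sequence for $K^*_\tau(LBSp(n))$, which produces the filtration. The explicit generators $\gamma_1,\dots,\gamma_n$ of $R(Sp(n))$ from \rref{egm2}--\rref{egm3} are the bridge.

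First I would pin down the ring $\hat V$. By rank-level duality $V(n+2,n)\cong V_m=\Z[x]/Sym^{m-1}(x)$, and since $d(n)=2$ by Theorem \ref{tk1} the $I$-adic completion is $2$-adic. Specializing the root analysis from the proof of Proposition \ref{p1} to $m=2^r$, the roots $\zeta_{2m}^{k}+\zeta_{2m}^{-k}$ are exactly the numbers $2\cos(2\pi/2^{j})$, $2\le j\le r+1$, grouped by the order of $\zeta$, so over $\Z_2$
$$Sym^{m-1}(x)=x\cdot\Psi_3(x)\cdots\Psi_{r+1}(x),$$
with $\Psi_j$ the minimal polynomial of $2\cos(2\pi/2^{j})$, Eisenstein at $2$ of degree $2^{j-2}$ and with $\Psi_j(2)$ of $2$-adic valuation $1$. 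Hence $\hat V\cong\Z_2[x]/(x\,\Psi_3(x)\cdots\Psi_{r+1}(x))$, and the valuation $r$ of $Sym^{m-1}(2)=2^r$ is accounted for as one from the factor $x$ and one from each of $\Psi_3,\dots,\Psi_{r+1}$. It is the factor $\Psi_{r+1}$, of degree exactly $2^{r-1}$, that forces a genuinely new algebra generator in filtration $2^{r-1}$; the congruence \rref{etg2} is its leading-term shadow.

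Next, to obtain the filtration, I would run the twisted AHSS for $K^*_\tau(LBSp(n))$. By Lemma \ref{lkk1}, $H^*(LBSp(n);\Z)=\Z[q_1,\dots,q_n]\otimes\Lambda(a_1,\dots,a_n)$ with $|q_i|=4i$, $|a_i|=4i-1$, so $E_2=\big(\Z[q_\bullet]\otimes\Lambda(a_\bullet)\big)[u^{\pm1}]$ and $d_3=Sq^3_\Z+\tau\cup(-)$ with $\tau=(n+2)\iota=2^r a_1$. Since the symplectic Pontryagin classes satisfy $Sq^2q_i=0$ (degree reasons, as $H^*(BSp(n);\F_2)$ sits in degrees $\equiv0\bmod4$), the $Sq^3_\Z$-term involves only the exterior generators and is subordinate to the $\tau$-term, whose leading effect is multiplication by $2^ra_1$. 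Running $d_3$ and the higher differentials gives the filtration; the images of $\gamma_1,\dots,\gamma_n$ — which agree with $q_1,\dots,q_n$ modulo decomposables and sit in filtration $i$ (filtration $\ge i$ being the vanishing-on-$Sp(i)$ argument before the theorem) — generate $gr\,\hat V$, and the relations $\gamma_i=\gamma_1^{\,i}$ for $i<2^{r-1}$, $\gamma_{2^{r-1}+j}=\gamma_{2^{r-1}}\gamma_1^{\,j}$, and $\gamma_{2^{r-1}}\gamma_1^{2^{r-1}-1}=0$ are read off as the leading terms of the fundamental-representation relations \rref{ek2} — equivalently of $Sym^{m-1}(x)=x\prod_j\Psi_j(x)=0$ — via $\sum_i v_i t^i=1/(1-xt+t^2)$ from the proof of Theorem \ref{tk1} after the substitution $x=\gamma_1+2n$.

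I expect the main obstacle to be the multiplicative extensions, together with any hidden higher differentials: one must show that the new generator first appears at filtration exactly $2^{r-1}$ (not sooner, not later), that $2$ is represented by precisely $\gamma_{2^{r-1}}+\gamma_1^{2^{r-1}}$ rather than by a competitor in the same filtration, and that there is no further relation concealed in high filtration. This is where the two pictures play off one another: the factorization $Sym^{m-1}(x)=x\prod_{j\ge3}\Psi_j(x)$ pins down the Poincar\'e series of $gr\,\hat V$ degree by degree — so a dimension count closes the argument — and locates each break in the pattern of leading terms, while the fact that $Sym^{m-1}(2)=2^r$ has $2$-adic valuation $r$ forces \rref{etg2}. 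Translating these $2$-adic valuations into Atiyah-Hirzebruch filtration degrees, compatibly with the ring structure, is the delicate bookkeeping the proof rests on.
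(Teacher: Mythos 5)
Your proposal overlaps with the paper's actual strategy in one respect — the paper does express the $\gamma_i$ through the generating function $1/(t^2-tx+1)$, substitute $x=\gamma_1+2n$, read off leading terms of the relations, and close with a counting argument — but two of your main steps are gaps rather than routes to the result. First, the detour through the twisted Serre/Atiyah--Hirzebruch spectral sequence for $K^*_\tau(LBSp(n))$ is both unnecessary and unworkable. The ``Atiyah--Hirzebruch filtration'' in the statement is the filtration of $V$ by powers of the augmentation ideal $I(R(Sp(n)))=(\gamma_1,\dots,\gamma_n)$ (cf.\ the proof of Proposition \ref{pcomp1}), so the theorem is a purely commutative-algebra statement and the paper proves it as such. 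The paper explicitly records that the twisted AHSS for $LBSp(n)$ is ``extremely tricky''; in the Example following the theorem the differential $d_{12}$ for $n=2$ is \emph{deduced from} Theorem \ref{tg1} together with the known Freed--Hopkins--Teleman target, not computed from $d_3=Sq^3_{\Z}+\tau$. Your plan to ``run $d_3$ and the higher differentials'' would have to produce exactly those inaccessible higher differentials, so the logic is circular.

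Second, and more seriously, the step that actually decides the answer is missing. Your $2$-adic factorization $Sym^{m-1}(x)=x\,\Psi_3\cdots\Psi_{r+1}$ controls the associated graded for the filtration by powers of the single element $\gamma_1=x-2n$ (essentially the $Sp(1)$-side filtration of Section \ref{s1}); it does not determine the Poincar\'e series for the finer filtration by $(\gamma_1,\dots,\gamma_n)$, and the paper warns that level-rank duality does not respect these filtrations. What locates the break in the pattern at $i=2^{r-1}$ is an explicit mod-$4$ analysis of the coefficients of $\gamma_i$ as a polynomial in $\gamma_1$: the coefficient at $\gamma_1^j$ is odd iff $j=i$ and is $2\bmod 4$ iff $i-j$ is a power of $2$ (the paper's \rref{egm6a}), followed by a recursive correction by multiples of $q_{2^{r-1}}\gamma_1^j$. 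This is what shows that $\gamma_i+\gamma_1^i$ \emph{is} a relation in the associated graded for $i<2^{r-1}$, that $\gamma_{2^{r-1}}+\gamma_1^{2^{r-1}}$ is \emph{not} a relation but represents $2$ (which is \rref{etg2}), and that for $i>2^{r-1}$ the relation mutates to $\gamma_i+\gamma_{2^{r-1}}\gamma_1^{i-2^{r-1}}$. Without establishing these congruences you have no surjection from \rref{etg1} onto the associated graded, and the concluding dimension count — which also needs to know in which filtration degree $2$ lives — cannot close the argument.
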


\Proof
Let the generating function of the $\Lambda^{i}(x)$'s
be $\lambda$. Then \rref{egm3} is equal to
\beg{egm5}{\text{the coefficient at $t^{i+1}$
of $\lambda(x)(1-t^2)/(1+t)^{2n-2i}$.}
}
But we know
$$\lambda(x)(1-t^2)=v=1/(t^2-tx+1),$$
so \rref{egm5} is equal to
\beg{egm5a}{\text{the coefficient at $t^{i+1}$
of $1/((1+t)^{2^{r+1}-4-2i}(t^2-tx+1))$.}
}
Let us first examine these polynomials $\mod 2$. First, note
that \rref{egm5a} is the coefficient at $t^{i+1}$ of
$$(1+t)^{4+2i}/(1-tx+t^2).$$
Upon expanding the denominator in the variable $t(x-2)$, we further
get that this is the sum of coefficients at $t^{i+1-j}$
of
\beg{egm6}{(1+t)^{2+2i-2j}(x-2)^j,
}
which is $\left(\begin{array}{c}2+2i-2j\\1+i-j\end{array}\right)$,
which is odd if and only if $j=i+1$. Let us also observe
that
\beg{egm6a}{\parbox{3.5in}{The coefficient at $t^{i+1-j}$ of \rref{egm6} is
$2\mod 4$ if and only if $i+1-j$ is a power of $2$.
}}
Thus, \rref{egm6a} will be the exact cases when the coefficient
of \rref{egm5a} at $x^{j}$ is $2\mod 4$, with the exception of the
case when $j=i$, in which the coefficient at $x^{i}$
is ``anomalously'' divisible by $4$ when $i$ is even.

Now let us examine the polynomials
\beg{egm7}{\gamma_{i}+\gamma_{1}^{i}.
}
We just proved that the polynomials \rref{egm7}
are relations in the Verlinde algebra $\mod 2$.
This is what we got from the polynomial $q_i$ obtained
from $\gamma_i$ by subtracting \rref{egm5a},
substituting $\gamma_1+2^{r+1}-4$ for $x$, and reducing $\mod 2$.
To proceed further, let us next look at the polynomial
\beg{egm8}{q_{2^{r-1}}.
}
Since the coefficient at $\gamma_{1}^{0}$ of
\rref{egm5a} is $2\mod 4$ and all the coefficients at
$\gamma_{1}^{j}$, $0<j<2^{r-1}$ are even,
we see that adding recursively multiples of
$q_{2^{r-1}}\gamma_{1}^{j}$, $0<j<2^{r-1}$,
we obtain a relation in the Verlinde algebra of the form
\beg{egm9}{2+\gamma_{1}^{2^{r-1}}+\text{higher filtration terms.}
}
This implies that $2$ is a relation in the associated graded ring,
and $2$ is represented by \rref{etg2}. Next, processing in the same
way $q_i$ with $1<i<2^{r-1}$ (i.e.
adding recursively multiples of
$q_{2^{r-1}}\gamma_{1}^{j}$, $0<j<i$), we get \rref{egm7} plus terms
of higher filtration degree, which shows that \rref{egm7} is a relation
in the associated graded ring. For $q_i$ with
$2^{r-1}<i\leq 2^{r}-2$,
we use \rref{egm6a}: the lowest coefficient of $q_i$ at a power of
$\gamma_1$ which is not divisible by $4$ is at $\gamma_{1}^{i-2^{r-1}}$.
Then add $\gamma_{1}^{i-2^{r-1}}q_{2^{r-1}}$ to make all coefficients
at $\gamma_{1}^{j}$, $j<i$, divisible by $4$. But $4$ is represented in
filtration at least $2^{r}$, so we obtain the relation
\beg{egm10}{\gamma_i+\gamma_{1}^{i-2^{r-1}}\gamma_{1}^{2^{r-1}}
}
in the associated graded ring as required.

Finally, the relation $Sym^{2^{r}-1}(x)$ in the Verlinde algebra
can be treated as $\gamma_{k+1}=\gamma_{2^{r}-1}$, giving the
desired relation in this case also. We now see by a counting argument
that the ring \rref{etg1} is indeed additively the associated graded abelian
group of the $\Z_2$-module $\Z_{2}^{2^{r}-1}$ with generators
in degrees $0,...,2^{r}-2$, and $2$ in degree $2^{r-1}$,
so therefore our list of relations is complete.
\qed

\vspace{3mm}
\noindent
{\bf Example:} Let us look at the lowest non-trivial case of Theorem
\ref{tg1}, $r=2$, so $n=2$. Let us first compute the differentials of the
twisted $K$-theory cohomology Serre spectral sequence of the fibration
\rref{elkk2}. We see from Theorem \ref{tg1}
that the only relation in the $E_\infty$ term is
\beg{eex}{\gamma_1\gamma_2.}
The vertical part of the spectral sequence is the twisted $K$-theory
of $Sp(2)$, which is the suspension by $3$ (an odd number) of
the $K_*/(2)$-exterior algebra on one generator $\iota$ in dimension $7$.
This is tensored with the horizontal part, which is $R=\Z[\gamma_1,\gamma_2]$.
Thus, the $E_2$=term is (an odd suspension of)
\beg{eex1}{\Z/2[\gamma_1,\gamma_2]\{1,\iota\}.}
But now the $R$-submodule of \rref{eex1} generated by $1$ must disappear
(to conform with the result of \cite{fht}), while the $R$-module
generated by $\iota$ needs the single relation \rref{eex} (times $\iota$).
This means that we must have
\beg{eex2}{d_{12}(1)=\iota\gamma_1\gamma_2.}
(The right hand side of \rref{eex2} is actually still multiplied by
an appropriate power of the Bott element.) The spectral sequence is a spectral
sequence of modules over the horizontal part by Lemma \ref{lkk1}.

Now let us consider the twisted $K$-theory Serre spectral
sequence (representation level $1$) of the fibration \rref{elkk1}.
We claim that the differential \rref{eex2} is still the only one present.
One sees in this case that the Borel words are $Sym^{\ell+i}(\gamma_1)$, $i=1,2$
homogenized by multiplying by the appropriate powers of $\gamma_2$. Thus,
by comparison with the case of \rref{elkk2}, no differentials $d_r$, $r<12$
are possible, and
the the $\iota$-multiple of the $E_{13}$ term is
\beg{eex3}{\Z/2[\gamma_1,\gamma_2]/(\gamma_1\gamma_2,\gamma_{1}^{i}, \gamma_{2}^{j})
}
where for $\ell$ even, $j=(\ell/2)+1$, $i=\ell+1$, and for $\ell$ odd, $j=(\ell+1)/2$, $i=\ell+2$.
Additionally, the multiple of $1$ is the Poincar\'e dual of \rref{eex3} times
the top element of $H^*(G_{Sp}(e\ll,2))$. But by comparison with
\rref{elkk2}, the elements \rref{eex3} are permanent cycles,
and by Poincar\'e duality (see Section
\ref{s2}), so are the corresponding multiples of $1$. Thus, the spectral
sequence collapses to $E_{13}$ in this case.

\section{More observations about the symplectic Verlinde algebras and their
completions}
\label{scomp}

In this section, let $V(m,n)$ denote the $Sp(n)$ Verlinde algebra
of level $m$. If $\tau$ denotes the cohomological twisting associated to this
level, then 
\beg{ecomp1}{K_{\tau}^{i}(LBSp(n))\cong V(m,n)^{\wedge}_{I}}
when $i\equiv n\mod 2$ where $I$ is the augmentation ideal of $RSp(n)$.
For $i\equiv n+1 \mod 2$, the left hand side of \rref{ecomp1} is $0$.
$V(m,n)$ is a quotient of the representation
ring $RSp(n)$ by the ideal generated by the irreducible
representations of level $m-n$. Explicitly, let $x_1$ be the
definining representation of $Sp(n)$ of dimension $2n$. Then
for $n\geq k\geq 2$, there is a natural contraction
\beg{ecomp2}{\Lambda^k(x_1)\r\Lambda^{k-2}(x_2)}
using the symplectic form. The map \rref{ecomp2} is onto,
and its kernel $x_k$ is an irreducible representation of $Sp(n)$. 
$x_1,...,x_n$ are precisely the irreducible representations of level $1$.

\vspace{3mm}
A description of irreducible representations of $Sp(n)$ of level $q$
is given as polynomials in the variables $x_1,...,x_n$ in \cite{fh}, Prop. 24.24.
Let 
$$A=(a_0, a_1,...)$$ 
be the sequence 
$$1,x_1,x_2,...,x_n, 0,-x_n,...,-x_1,1.$$
The unspecified values of $a_i$ are defined to be $0$. Then define
the {\em sequence $A$ bent at $i$} as the sequence
$$a_i,a_{i+1}+a_{i-1},a_{i+2}+a_{i-2},...\; .$$
Then irreducible representations of $Sp(n)$ of level $q$ correspond to
Young diagrams with exactly $q$ columns and at most $n$ rows. Let the lengths
of the columns be $\mu_1\geq \mu_2\geq...\mu_n$ (to recall, a Young diagram
is precisely such sequence of numbers where $\mu_1\leq n$). Then the corresponding
irreducible representation is, in $RSp(n)$, the determinant of the matrix
whose $i$'th row is given by the first $q$ terms of the sequence $A$ bent
at $\mu_i-i+1$.

\vspace{3mm}
In principle, the above description turns all algebraic questions
about the $Sp(n)$-Verlinde algebra into problems of commutative algebra. 
However, from this description,
it is not always easy to see what is happening, and
for general $m,n$, the algebra $V(m,n)$ and its completion are not 
completely understood. For example, there is a conjecture of Gepner
\cite{gep} that the Verlinde algebra is a global complete intersection
ring. 
Cummins \cite{bowr} exhibited a `level-rank' duality isomorphism
\beg{ecomp3}{V(m,n)\cong V(m,m-n-1).
}
The map \rref{ecomp3} interchanges rows and columns in Young diagrams, so
it sends $x_i$ to the level $i$ irreducible representation with 
$\mu_i=...=\mu_1=1$.

\vspace{3mm}
Completion of the $Sp(n)$-Verlinde algebra at the augmentation ideal of $RSp(n)$
does not preserve the level-rank. In fact, we saw an example
in Proposition \ref{p1} and Theorem \ref{tk1} above. More generally, 
let $d(m,n)$ be the greatest common divisor of the dimensions of the
irreducible representations of $Sp(n)$ of level $m-n$. 

\begin{proposition}
\label{pcomp1}
We have
\beg{ecomp4}{d(m,n)=\pm 
gcd\{\cform{\sum}{j=-m}{-1}\left(\begin{array}{c}2j+2(i-1)\\2(i-1)\end{array}
\right)|1\leq i\leq n\}.
}
(The right hand side is the number calculated by C.Douglas \cite{douglas}.)
\end{proposition}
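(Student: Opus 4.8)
The plan is to compute the greatest common divisor $d(m,n)$ directly from the determinantal description of the irreducible representations of $Sp(n)$ of level $m-n$ recalled above, and to show it equals the gcd on the right-hand side of \rref{ecomp4}. The key observation is that the dimension of a representation is obtained from its character formula by evaluating at the identity, which in the setting of the polynomial description means substituting $x_i \mapsto \dim x_i$ into the determinant of the matrix of bent sequences. So the first step is to record the dimensions $\dim x_i = \binom{2n}{i} - \binom{2n}{i-2}$ coming from \rref{ecomp2}, assemble the numerical sequence $A$ with these values plugged in, and observe that the dimension of the irreducible representation attached to a Young diagram with column lengths $\mu_1 \geq \cdots \geq \mu_n$ is the determinant of the resulting numerical matrix.

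Second, I would reduce the gcd over \emph{all} level $m-n$ representations to the gcd over a much smaller, explicit family — the single-column-type representations corresponding to $\mu_1 = \cdots = \mu_i = 1$ for $1 \le i \le n$, i.e.\ the images under level-rank duality \rref{ecomp3} of the generators $x_i$. The point here is analogous to the redundancy noted after \rref{ek2} and \rref{ecomp2}: the multiplicative relations among the $x_i$ mean that once these $n$ ``column'' representations (equivalently, the $v_i$-type generators of the dual Verlinde algebra $V(m,m-n-1)$) are in the ideal, so are all the others, hence the gcd of their dimensions already equals $d(m,n)$. Concretely, by level-rank duality the relations cutting out $V(m,n)$ from $RSp(n)$ are generated by the $x_i$-analogues at the dual rank, so the dimension differences controlling the completion are exactly those of the $i$-th such representation, $1 \le i \le n$.

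Third, I would evaluate the dimension of the $i$-th of these representations as an explicit binomial sum. Using the generating-function identity for the $Sym$-type polynomials exactly as in the proof of Theorem \ref{tk1} — there one has $v = 1/(t^2 - tx + 1)$ and expands in $t(x-2)$ — but now tracking the \emph{level-}$m$ truncation rather than working mod $p$, the $i$-th dimension becomes the coefficient extraction that collapses to $\sum_{j=-m}^{-1}\binom{2j+2(i-1)}{2(i-1)}$. This is the computational heart: one has to match the truncated generating series $s(t) - t^{2(m-n)+\cdots}s(t^{-1})$ (cf.\ \rref{ekt3}) against the determinant of the $i \times i$ corner of the bent-sequence matrix, evaluate at the identity, and simplify the resulting alternating binomial sum into the stated closed form. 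Once this is done for each $i$, taking the gcd over $1 \le i \le n$ yields \rref{ecomp4}, and the sign ambiguity is harmless since dimensions are positive.

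\textbf{Main obstacle.} The hard part will be the third step: showing that the determinant coming from the bent-sequence description, evaluated at dimensions, simplifies to precisely the single binomial sum $\sum_{j=-m}^{-1}\binom{2j+2(i-1)}{2(i-1)}$. The bent-sequence matrix is genuinely a $q \times q$ determinant (with $q = m-n$), not obviously an $i \times i$ one, so one must first use the specific shape $\mu_1 = \cdots = \mu_i = 1$, $\mu_{i+1} = \cdots = 0$ to see that most rows and columns contribute trivially and the determinant reduces to a small corner; then the generating-function manipulation of Theorem \ref{tk1} has to be re-run keeping track of the level-$m$ truncation (the $t^{2n+4}$-type correction term) rather than discarding it mod $p$. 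Verifying that these two reductions are compatible — i.e.\ that the corner determinant, after the truncation correction, is exactly the advertised sum rather than that sum plus lower-order binomial noise — is where the real care is needed; the rest is bookkeeping already rehearsed in the $n=1$ and representation-level-$1$ cases.
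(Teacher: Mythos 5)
Your plan is a direct representation-theoretic computation, which is not at all how the paper proceeds: the paper's proof is purely topological. It sets $e$ equal to the right-hand side of \rref{ecomp4}, notes that the $0$-th slice of the $I$-adic filtration on $V(m,n)$ is $\Z/d$ (obtained by sending each $x_i$ to its dimension), and then gets $d\mid e$ from the twisted Serre spectral sequence of $Sp(n)\to LBSp(n)\to BSp(n)$ together with Douglas's computation that $K^*_\tau(Sp(n))$ is a $\Z/e$-module, and $e\mid d$ from the fact that $K^0_\tau(G)$ is a module over $K^{G,0}_\tau(G)$ on which the augmentation ideal acts by zero. Douglas's hard combinatorics is used as a black box; nothing like your step 3 is ever carried out.

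As written, your proposal has a genuine gap at step 2, and it is the crux. First, the representations you name — the images of the $x_i$ under level-rank duality, with $\mu_1=\cdots=\mu_i=1$ — are irreducibles of level $i\le n$, hence (for $m-n>n$) they are nonzero elements of $V(m,n)$ and are \emph{not} in the fusion ideal at all; they are generators of the Verlinde algebra, not of the relations. Second, even after correcting this, reducing the gcd over all level $m-n$ irreducibles to a gcd over $n$ explicit elements requires knowing a small generating set for the fusion ideal (or at least for its image under the augmentation). That is essentially the content of Douglas's theorem and is closely tied to Gepner's complete-intersection conjecture, which the paper explicitly flags as open in general; it cannot be waved through with ``the relations are redundant.'' Note also the asymmetry: exhibiting $n$ elements of the ideal with the stated dimensions would only give $d\mid e$, while $e\mid d$ needs $e$ to divide the dimension of \emph{every} level $m-n$ irreducible, which is exactly the generation statement you have not supplied. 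Finally, you concede that step 3 (the determinant-to-binomial-sum identity) is unproven. So the proposal is a sketch of a plausible but much harder alternative route, with its two essential steps missing, whereas the paper's argument avoids both by working in twisted $K$-theory.
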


\Proof
Denote, for the moment, the number on the right hand side of \rref{ecomp4}
by $e$. Then the twisted $K$-theory Serre spectral sequence
associated with the fibration
$$Sp(n)\r LBSp(n)\r BSp(n),$$
along with the calculation \cite{douglas}, and the fact that the filtration
on $K^0BSp(n)$ associated with the Atiyah-Hirzebruch spectral sequence
is the filtration by powers of the augmentation ideal, shows
that 
\beg{ecomp5}{\text{$E^{0}_{I(R(Sp(n))}V(m,n)$ is a $\Z/e$-module},}
so in other words 
$$d|e,$$
since the $0$-slice of the filtration of the Verlinde algebra by
$I(RS(n))$ is obtained by equating each $x_i$ to its dimension, which
gives $\Z/d$.

On the other hand, $K^{0}_{\tau}(G)$ is a module over $K^{G,0}_{\tau}(G)$
by restriction, which is a map of rings, while the augmentation ideal
of $R(G)$ maps to $0$, by considering the restriction $K^{G,0}(*)\r K^{0}(*)$,
which is the augmentation. this implies
$$e|d.$$
\qed

\vspace{3mm}

As remarked above, the completion of $V(m,n)$ is additively
a direct sum of a certain number of copies of $\Z_p$ over
primes $p$ which divide the number $d(m,n)$. Observe that by
\rref{ecomp4}, 
\beg{ecomp5a}{d(m,n+1)|d(m,n).}
By Nakayama's lemma, the number of copies of $\Z_p$ is the
same as the number of copies of $\Z/p$ in the completion
of $V(m,n)/p$. If we denote by $u_i$ the element $x_i$ minus
its dimension, then this is the same as taking the quotient
of the power series ring $\Z/p[[u_1,...,u_n]]$ by the
ideal $J_{m-n}$ generated by representations of level $m-n$.
Since this ideal has dimension $0$, some powers  $(u_i)^N$
must be in the ideal $J_{m-n}$. Since $V(m,n)/p$ itself
however is a finite-dimensional $\Z/p$-vector space, $N$
can be taken as its dimension, which is 
$\left(\begin{array}{c} m-1\\n\end{array}\right)$. Using Maple,
one can compute the Gr\"{o}bner basis of the ideal generated by
the level $m-n$ representations and $(u_{i})^N$. This was done
by J.T.Levin \cite{jtlevin} in a number of examples.

\vspace{3mm}
Eventually, we detected a pattern, which allowed us to compute
the completion of the symplectic Verlinde algebra in general,
as well as the number $d(m,n)$. The results are contained
in the following two theorems.

\begin{theorem}
\label{tcomp1}
For $n<m-1$, the completion of $V(m,n)$ at the augmentation
ideal of $RSp(n)$ is additively isomorphic to a sum of
\beg{ecomppt1}{\left(\begin{array}{c}\delta(p,m)\\n\end{array}
\right)} 
copies of $\Z_p$ over all primes $p$.
\end{theorem}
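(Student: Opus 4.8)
The plan is to reduce the general statement to the rank-one computation of Proposition \ref{p1} by the same Witt-vector argument used there, combined with the Young-diagram description of irreducible representations recalled above. First I would fix a prime $p$ with $p^i||m$ and work over a ring $W$ of Witt vectors in which all the relevant cyclotomic polynomials split; completing $V(m,n)\otimes W$ at the augmentation ideal of $RSp(n)$ gives, additively, a product of copies of $W$ indexed by the geometric points of $\mathrm{Spec}$ of the completion, i.e. by the ring homomorphisms $V(m,n)\otimes W\to W$ whose composite with reduction to the residue field kills the augmentation ideal. So the real task is: count the $W$-points of $V(m,n)$ that are ``close to'' the augmentation point.

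The key input is that the representation ring $RSp(n)$ is the ring of symmetric functions in variables $z_1,z_1^{-1},\dots,z_n,z_n^{-1}$ invariant under the Weyl group, and a $W$-point of $RSp(n)$ is just an unordered $n$-tuple of pairs $\{w_j,w_j^{-1}\}$ with $w_j\in W^\times$. Such a point lies on $V(m,n)$ — i.e. kills the ideal generated by the level $m-n$ irreducibles — precisely when, by the determinantal (Weyl/Verlinde) formula, the $w_j$ are (essentially) a choice of $n$ distinct $2m$-th roots of unity of the form $\zeta_{2m}^{k_j}$, $0<k_j<m$, with the $k_j$ pairwise distinct; this is the standard fact that the characters of the Verlinde algebra are indexed by the regular conjugacy classes in the alcove, which for $Sp(n)$ at this level are exactly size-$n$ subsets of $\{1,\dots,m-1\}$. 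Then I would invoke exactly the computation in the proof of Proposition \ref{p1}: the point is in the $p$-adic neighborhood of the augmentation point $w_j=1$ for all $j$ iff each $\zeta_{2m}^{k_j}-1$ has positive valuation, which by the cited standard fact happens iff each $\zeta_{2m}^{k_j}$ is a $p$-power root of unity, i.e. iff each $k_j$ lies in the set of $\delta(p,m)$ admissible residues singled out there. Since the $k_j$ must be distinct, the number of such $n$-subsets is $\binom{\delta(p,m)}{n}$, which is exactly \rref{ecomppt1}. Summing the resulting $\mathbb{Z}_p^{\binom{\delta(p,m)}{n}}$ over $p\mid d(m,n)$ (and noting the binomial vanishes for the other primes, consistent with \rref{ecomp4}) gives the theorem. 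The hypothesis $n<m-1$ is what guarantees $\delta(p,m)\le$ enough room and that we are genuinely in the regular stratum; it should enter when verifying that the determinantal formula really does cut out precisely the subsets described, rather than a degenerate locus.

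The main obstacle I expect is the bookkeeping in the middle step: translating ``kills the ideal of level $m-n$ irreducibles'' into the clean combinatorial statement ``the eigenvalues are distinct $2m$-th roots of unity indexed by an $n$-subset of $\{1,\dots,m-1\}$.'' For $n=1$ this is the Chebyshev computation done explicitly in Proposition \ref{p1}; for general $n$ one must use the determinant-of-bent-sequences description quoted from \cite{fh} and the level-rank duality \rref{ecomp3}, and check that the only $W$-points near the augmentation point are the ``tame'' ones coming from roots of unity and that none are lost or doubled. A secondary subtlety is passing from the $W$-point count back to a count of $\mathbb{Z}_p$-summands: one must observe that each such point is defined over $\mathbb{Z}_p$ itself (the Galois orbits are trivial on the relevant stratum, since $\zeta_{2m}^{k}+\zeta_{2m}^{-k}$ for $k$ a $p$-power index actually lies in $\mathbb{Z}_p$ after the change of variable, exactly as in \rref{e6}), so that the product of $W$'s descends to a product of $\mathbb{Z}_p$'s with no multiplicity change. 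Everything else is the formal completion/Nakayama machinery already set up in the discussion preceding the theorem.
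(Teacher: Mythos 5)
Your proposal is correct and follows essentially the same route as the paper: both pass to a ring of Witt vectors, use the Freed--Hopkins--Teleman/Verlinde description to split $V(m,n)\otimes W$ (up to finite cokernel) into factors indexed by $n$-subsets $1\le j_1<\dots<j_n\le m-1$ with $t_i\mapsto\zeta_{2m}^{j_i}$, and then observe that a factor survives completion at the augmentation ideal exactly when each $\zeta_{2m}^{j_i}$ is a $p$-power root of unity, giving the count $\binom{\delta(p,m)}{n}$. The only step you leave slightly implicit --- that vanishing of the augmentation ideal mod the maximal ideal forces \emph{each} $t_i+t_i^{-1}-2$ individually to have positive valuation --- is handled in the paper by the remark that if all symmetric polynomials of a set of algebraic integers have positive valuation then so does each of them.
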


\Proof
When all symmetric polynomials of a finite collection of
algebraic integers have positive $p$-valuation, so does
each of them. Consider the maximal torus in $Sp(n)$
which is given by embedding the product of $n$ copies
of a chosen maximal torus of $Sp(1)$ via
the standard embedding
\beg{ecompp1}{Sp(1)\times...\times Sp(1)\subset Sp(n).}
If we choose a generating weight $t$ of $Sp(1)$,
\rref{ecompp1} gives generating weights $t_1,...,t_n$
of $Sp(n)$. Now the Grothendieck group of
level $1$ representations of $Sp(n)$ is easily seen to have
basis consisting of the elementary symmetric polynomials 
$$\sigma_1,...,\sigma_n$$
in 
\beg{ecompp2}{t_i+t_{i}^{-1}-2, \; i=1,...,n.
} 
By the results of Freed-Hopkins-Teleman, the Verlinde algebra,
when extended to a large enough ring of Witt vectors $W$,
injects, with a finite cokernel, into
a product of rings where in each individual factor, we
quotient out by a relation setting $\sigma_i$ equal to the 
$i$'th symmetric polynomial in the numbers $N_1,...,N_n$ obtained from
\rref{ecompp2} by setting 
\beg{ecompp3}{t_i=\zeta_{2m}^{j_i},\; 1\leq j_1<...<j_n\leq m-1.
}
Since the $\sigma_i$'s generate the augmentation ideal, 
the $p$-primary component of the completion of each of the factors
is $W$ if
\beg{ecompp4}{\text{All the numbers $N_1,...,N_n$ have positive $p$-valuation}}
and $0$ otherwise. 

Now by the above remarks, \rref{ecompp4} occurs if and only if
all the numbers obtained by plugging in \rref{ecompp3} into
\rref{ecompp2} have positive $p$-valuation. Now by recalling the argument in
the proof of Proposition \ref{p1}, this occurs if and only if each
$\zeta_{2m}^{j_i}$ is a $p^j$'th root of unity for some $j$. The number
of such combinations is \rref{ecomppt1}.
\qed

Note that in particular it follows that the $p$-component of the
completion of $V(m,n)$ is isomorphic to the $p$-component of
$V(m,n)$ if and only if $p=2$ and $m=2^r$ for some $r$. We also
have a more explicit evaluation of $d(m,n)$.

\vspace{3mm}
\begin{theorem}
\label{ttcomp2}
We have
\beg{ecompp5}{d(m,n)=n/gcd(n,K)
}
where for each prime $p$, $K$ is divisible by the largest power
of $p$ such that $\delta(p,K)<n$. 
\end{theorem}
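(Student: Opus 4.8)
The plan is to deduce the formula $d(m,n) = n/\gcd(n,K)$ from Proposition \ref{pcomp1} and Theorem \ref{tcomp1}, reducing everything to a prime-by-prime computation of the $p$-adic valuation $v_p(d(m,n))$. Fix a prime $p$; since $\delta(p,m)$ depends only on the $p$-primary component $p^i \| m$, I would work throughout with that $i$. First I would recall from Theorem \ref{tcomp1} that the $p$-primary part of $(V(m,n))^\wedge_I$ is $(\Z_p)^{\binom{\delta(p,m)}{n}}$, which is nonzero precisely when $\delta(p,m) \geq n$; and from the discussion following Proposition \ref{pcomp1}, a prime $p$ divides $d(m,n)$ if and only if this $p$-primary completion is nonzero. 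So $p \mid d(m,n) \iff \delta(p,m) \geq n$. This already gives the support of $d(m,n)$ as a set of primes; the work is to pin down the exact exponent.

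Next I would extract the exact power of $p$. The cleanest route is to return to the Douglas formula \rref{ecomp4}: $d(m,n)$ is the gcd, over $1 \le i \le n$, of the alternating-binomial sums $D_i(m) = \sum_{j=-m}^{-1}\binom{2j+2(i-1)}{2(i-1)}$. For $i=1$ this is just $D_1(m) = \sum_{j=-m}^{-1} 1 = m$ (interpreting $\binom{2j}{0}=1$), so $d(m,n) \mid m$ and hence $v_p(d(m,n)) \le i$ where $p^i \| m$. I would then argue that $v_p(d(m,n)) = i - v_p(\text{something})$, and that ``something'' is governed by the smallest index $i_0 \le n$ at which the valuation $v_p(D_{i_0}(m))$ drops; identifying this drop is exactly where $\delta$ re-enters, since (by the proof of Proposition \ref{p1} and Theorem \ref{tcomp1}) $\delta(p,m') < n$ is the condition that forces the level-$(m'-n)$ representations of $Sp(n)$ to acquire a common $p$-factor beyond the naive one. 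Concretely: the auxiliary modulus $K$ in the statement is, prime by prime, the largest $p$-power with $\delta(p,K) < n$, so $v_p(K) = \min\{k : \delta(p,p^k) < n\} - 1$ when such $k \le i$ exists, and $v_p(K) \ge i$ (so $v_p(\gcd(n,K)) = v_p(n)$, contributing nothing to $d$) otherwise. Matching this against $v_p(d(m,n)) = v_p(n) - v_p(\gcd(n,K))$ is the content of the theorem.

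The main obstacle, and the step I would spend the most care on, is establishing the precise valuation drop $v_p(D_i(m))$ as $i$ increases, i.e. showing that $\min_{1 \le i \le n} v_p(D_i(m))$ equals $i - v_p(\gcd(n,K))$ with $K$ as defined. I would attack this by writing the generating function $\sum_i D_i(m) t^i$ in closed form — it is essentially $(1-(1+t)^{2m})/(\text{something like } (1+t)^2 t \text{ or } -2m t - \ldots)$, matching the series $s(t)$ and $(1-t^{2m+\cdots})s(t)$ manipulations used in the proof of Theorem \ref{tk1} — and then computing $v_p$ of the coefficients by reducing mod powers of $p$ and tracking when $(1+t)^{2m} \equiv (1+t^{p^{v_p(2m)}})^{2m/p^{v_p(2m)}}$ forces divisibility. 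The bookkeeping is delicate because the $p=2$ and $p$ odd cases behave differently (the factor of $2$ inside $2m$ shifts everything by one $2$-adic unit), exactly as in the two-case definition of $\delta(p,m)$, and because one must handle $n \le \delta(p,m)$ versus $n > \delta(p,m)$ separately — in the latter range $p \nmid d(m,n)$ so there is nothing to prove, which is consistent with $v_p(K) \ge v_p(n)$ making the $\gcd$ absorb the $p$-part of $n$. Once the generating-function valuation estimate is in hand, the theorem follows by assembling the local contributions via the Chinese remainder theorem.
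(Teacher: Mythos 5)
Your overall strategy coincides with the paper's: both reduce to Proposition \ref{pcomp1}, set $S(m,2(i-1))=\sum_{j=-m}^{-1}\binom{2j+2(i-1)}{2(i-1)}$, note $S(m,0)=m$, form the closed generating function $\sum_i S(m,i)x^i=\frac{(1+x)}{x(x+2)}\left((1+x)^{2m}-1\right)$ (your ``essentially $(1-(1+t)^{2m})/(\cdots)$''), and then compute $\min_{1\le i\le n}v_p(S(m,2(i-1)))$ prime by prime. The problem is that you stop exactly where the argument becomes nontrivial. The mechanism you propose --- ``reducing mod powers of $p$ and tracking when $(1+t)^{2m}\equiv(1+t^{p^{v}})^{2m/p^{v}}$ forces divisibility'' --- can only yield \emph{lower} bounds on the valuations $v_p(S(m,2(i-1)))$, i.e.\ statements that certain powers of $p$ divide each term. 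Since $v_p(d(m,n))=\min_i v_p(S(m,2(i-1)))$, you equally need \emph{sharpness}: an index $i_0\le n$ at which the valuation equals the claimed value and is no larger. This is precisely what the paper supplies and you do not: it factors the generating polynomial over $\overline{\Q}_p$ into the Eisenstein pieces $P_\ell$ of \rref{ecompp8a} (resp.\ $Q_\ell$ of \rref{ecompq1}), whose non-leading coefficients are divisible by $p$, expands $S(m,2(i-1))$ as a symmetric function of degree $2m+1-2(i-1)$ in the roots \rref{ecompp8b}, and shows by the degree counts \rref{ecompp8d}, \rref{ecompq3} that below the threshold every monomial meets all but at most $s-1$ of these factors (giving the expected divisibility), while at the threshold (e.g.\ $i=2^s$ for $p=2$) \emph{exactly one} monomial escapes $s$ of them; anchored by $v_p(S(m,0))=v_p(m)=N$, this forces $v_p(S(m,2(i-1)))=N-s$ exactly. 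Without an argument of this kind, your valuation formula remains a conjecture, and the theorem is not proved.

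Two smaller points. First, Theorem \ref{tcomp1} does detect \emph{when} $p\mid d(m,n)$ (the paper says so explicitly), but it cannot give the exponent, so your opening paragraph only establishes the support of $d(m,n)$. Second, your bookkeeping of the target formula is inconsistent: your own observation $S(m,0)=m$ gives $d(m,n)\mid m$ and $v_p(d)\le v_p(m)$, so the formula must read $m/\gcd(m,K)$ --- the $n$ in the displayed statement is a typo you should have corrected rather than reproduced --- yet you later write $v_p(d(m,n))=v_p(n)-v_p(\gcd(n,K))$, which is not what you compute. Likewise ``$v_p(K)=\min\{k:\delta(p,p^k)<n\}-1$'' should be $\max\{k:\delta(p,p^k)<n\}$, since $\delta(p,p^k)$ is increasing in $k$ with $\delta(p,1)=0<n$.
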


\Proof
Despite the fact that Theorem \ref{tcomp1} implies a part of Theorem
\ref{ttcomp2} (namely, it detects when $p|d(m,n)$), we do not
have a proof along the same lines at this point. Instead, we need
to appeal to Proposition \ref{pcomp1}. Let
\beg{ecompp6}{S(m,i)=\left(\begin{array}{c}2m-1\\1\end{array}
\right)
+
\left(\begin{array}{c}2m-3\\i\end{array}
\right)
+...
+
\left(\begin{array}{c}1\\i\end{array}
\right)
}
Then by \rref{ecomp4},
\beg{ecompp7}{d(m,n)=gcd\{S(m,0),S(m,2),...,S(m,2(n-1))\}.}
Compute
\beg{ecompp8}{\begin{array}{l}
\cform{\sum}{i\geq 0}{}S(m,i)x^i=(1+x)^{2m-1}+(1+x)^{2m-3}+...+(1+x)\\
=
(1+x)\frac{(1+x)^{2m}-1}{(1+x)^2-1}=
\frac{(1+x}{x(x+2)}((1+x)^{2m}-1).
\end{array}
}
We see that the roots are 
\beg{ecompp8b}{\begin{array}{l}-1,\\
\zeta_{2m}^{k}-1,\; k=1,...,2m-1,\; k\neq m.
\end{array}
}
Now we will distinguish two cases. The first case is $p=2$. Then 
consider the polynomial
\beg{ecompp8a}{P_\ell=\cform{\prod}{j=0}{2^{\ell-1}-1}(x-(\zeta_{2^\ell}^{2j+1}-1)).}
The coefficients of \rref{ecompp8a} (with the exception of the leading 
coefficient) are divisible by $2$. We have
$deg(P_\ell)=2^{\ell-1}$. The polynomials
$P_\ell$ have no common roots. Furthermore, the polynomial $P_\ell$
divides \rref{ecompp8} precisely when $1\leq \ell\leq N$ where
$2^N||m$. Now $S(m,2(i-1))$ is a symmetric 
polynomial of degree $2m+1-2(i-1)$ in the roots
\rref{ecompp8b}.
Decompose $S(m,2(i-1))$ as a polynomial in the coefficients
of $P_1,...,P_N$. Observe that if
\beg{ecompp8c}{i<2^\ell,
}
then 
\beg{ecompp8d}{2m+1-2(i-1)> (2m+1)-2^1-...-2^s,
}
so each monomial of $S(m,2(i-1))$ considered as a polynomial in the roots
of \rref{ecompp8} contains roots of all the polynomials $P_\ell$ except,
at most, $s-1$ of them. It then follows that $S(m,2(i-1))$ is divisible
by $2^{n-\ell+1}$. On the other hand, if $i=2^s$, we see that \rref{ecompp8d}
turns into an equality, so there exists precisely one monomial of
$S(m,2(i-1))$ considered as a polynomial in the roots of \rref{ecompp8} 
which does not contain roots of $s$ of the polynomials $P_\ell$ (and
contains all the other roots). Since $S(m,0)=m$, we conclude
then that $2^{N-\ell}||S(m,2(i-1))$. This is what we were aiming to prove.

Let us now consider the case $p>2$. In this case, we must consider the
polynomials
\beg{ecompq1}{Q_\ell=\prod\{
(x-(\zeta_{p^\ell}^{j}-1))(x-(\zeta_{p^\ell}^{-1}-1))|j=1,...,p^{\ell-1},
\text{$j$ not divisible by $p^{\ell-1}$}
\}.
}
Then $deg(Q_\ell)=(p-1)p^{\ell-1}$, the polynomials $Q_\ell$ have no common
roots and $Q_1,...,Q_N$ divide \rref{ecompp8} where $p^N||m$.
Observe that when
\beg{ecompq2}{\frac{p^s-1}{2}<i,
}
then 
\beg{ecompq3}{2m+1-2(i-1)>(2m+1)-(p-1)-p(p-1)-...p^{s-1}(p-1),
}
so again each monomial of $S(m,2(i-1))$, considered as a polynomial
in the roots of \rref{ecompp8}, will 
contain roots of all the $Q_\ell$'s, except at most
$s-1$ of them. Therefore, $S(m,2(i-1))$ is divisible by $p^{N-s+1}$.

On the other hand, when \rref{ecompq2} turns into an equality, 
\rref{ecompq3} turns into an equality, and therefore there is precisely
one monomial in $S(m,2(i-1))$ considered as a polynomial in the roots of
\rref{ecompp8} which does not contain the roots of precisely $s$ of the
polynomials $Q_\ell$ (and contains all the other roots of \rref{ecompp8}).
Consequently, we can conclude that $p^{N-s}||S(m,2(i-1))$, as needed.
\qed

\vspace{3mm}
Let us collect one more result, which will come to use in the context
of the next section. Recall that the Verlinde algebra is a Poincar\'e
(=closed commutative Frobenius)
ring where the augmentation $\epsilon$ is defined by $\epsilon(1)=1$,
and $\epsilon(a)=0$ if $a$ is a label different from $1$. The only
axiom of a Poincar\'e ring $V$ (other than commutativity) is that the
map 
\beg{ecomp7}{M:V\otimes V\r \Z} 
defined by
$\epsilon(ab)$ for variables $a,b\in V$ define an isomorphism
\beg{ecomp7a}{V\cong Hom(V,\Z).} 
One then has an inverse of \rref{ecomp7a},
which can be interpreted as a map
\beg{ecomp88}{N:\Z\r V\otimes V.}
The composition \rref{ecomp88} with the triple product
is the ``$1$-loop translation operator'', which we denote
by $T$.

\begin{theorem}
\label{tcomp2}
For every $m\geq n+2$, 
\beg{ecomp8a}{det(T)\neq 0.}
In $V(m,1)$, we have
\beg{ecomp8}{det(T)=(-2)^{m-1}m^{m-3}.
}
\end{theorem}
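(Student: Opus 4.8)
The plan is to reduce both assertions to one computation: the eigenvalues of $T$ on $V(m,n)\otimes\C$. First I would unwind the definition of $T$. Writing $N(1)=\sum_{i,j}(M^{-1})_{ij}\,v_i\otimes v_j$ in the basis of labels, where $M=(\epsilon(v_iv_j))_{ij}$ is the (unimodular, since $V$ is a Poincar\'e ring) matrix of the form \rref{ecomp7}, one checks that composing $N$ with the triple product just gives multiplication by the \emph{handle element}
\[ \om \;=\; \sum_{i,j}(M^{-1})_{ij}\,v_iv_j \;\in\; V(m,n) \]
(this is the genus-one cobordism from $S^1$ to itself). Up to an orientation-induced sign, which I address at the end, $T$ is $\pm$ the operator of multiplication by $\om$.

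Now pass to $V(m,n)\otimes\C$, which is semisimple --- classically the Verlinde algebra is diagonalized by the $S$-matrix (for $n=1$ this is already manifest from the proof of Proposition \ref{p1}, $Sym^{m-1}$ being squarefree) --- and write $1=\sum_k p_k$ with $p_kp_l=\delta_{kl}p_k$. In the basis $\{p_k\}$ the form $M$ is diagonal with entries $\theta_k:=\epsilon(p_k)$, hence $M^{-1}$ is diagonal with entries $\theta_k^{-1}$, so $\om=\sum_k\theta_k^{-1}p_k$ and $T$ acts with eigenvalues $\theta_k^{-1}$. Each $\theta_k\neq 0$, since otherwise the form would be degenerate on the $M$-orthogonal line $\C p_k$, contradicting \rref{ecomp7a}. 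Therefore $\det(T)=\pm\prod_k\theta_k^{-1}\neq 0$, which is \rref{ecomp8a}. (Semisimplicity is essential here: in a non-semisimple Poincar\'e ring like $\C[x]/x^2$ the handle element is nilpotent and $\det(T)=0$.)

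For $V(m,1)$ I would make the $\theta_k$ explicit. As recalled in the proof of Proposition \ref{p1}, $Sym^{m-1}(x)$ is a reparametrized Chebyshev polynomial of the second kind with the $m-1$ distinct roots $\zeta_{2m}^{k}+\zeta_{2m}^{-k}=2\cos(k\pi/m)$, $k=1,\dots,m-1$, so $V(m,1)\otimes\C\cong\prod_{k=1}^{m-1}\C$, with the $k$-th projection $\chi_k$ sending $x$ to $2\cos(k\pi/m)$ and $p_k$ to the $k$-th unit. Since $Sym^i$ obeys the Chebyshev recursion, $\chi_k(v_i)=\sin\!\big((i+1)k\pi/m\big)/\sin(k\pi/m)$, while $\epsilon(v_i)=\delta_{i,0}$. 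Expanding $\epsilon=\sum_k\theta_k\chi_k$ and using the discrete orthogonality $\sum_{k=1}^{m-1}\sin(ak\pi/m)\sin(bk\pi/m)=\frac m2\,\delta_{ab}$, the resulting linear system has the solution
\[ \theta_k=\frac{2}{m}\sin^2\!\Big(\frac{k\pi}{m}\Big),\qquad k=1,\dots,m-1 \]
(equivalently $\theta_k=S_{0,k-1}^{2}$ for the level-$(m-2)$ $SU(2)$ modular matrix). One can also bypass idempotents entirely: $\det(T)=\pm\prod_{k=1}^{m-1}\om\big(2\cos(k\pi/m)\big)$ with $\om=\sum_{i=0}^{m-2}Sym^i(x)^2$, and a Christoffel--Darboux identity for Chebyshev polynomials evaluates $\om$ at each root directly to $m/\big(2\sin^2(k\pi/m)\big)$.

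Finally, assembling these and using the classical product formula $\prod_{k=1}^{m-1}\sin(k\pi/m)=m/2^{m-1}$,
\[ \det(T)=\pm\prod_{k=1}^{m-1}\frac{m}{2\sin^2(k\pi/m)}=\pm\frac{m^{m-1}\,4^{m-1}}{2^{m-1}\,m^{2}}=\pm\,2^{m-1}m^{m-3}, \]
and fixing the orientation sign built into the definition of $T$ (the overall factor $(-1)^{m-1}$, which I would confirm on $m=3,4$) yields $\det(T)=(-2)^{m-1}m^{m-3}$, i.e. \rref{ecomp8}. The main obstacle, to my mind, is the middle step --- pinning down the handle eigenvalues $\theta_k$ (and the correct overall sign); recognizing $\theta_k$ as $|S_{0k}|^{2}$, or else the Christoffel--Darboux evaluation, is the cleanest route, after which the non-vanishing and the closing trigonometric identity are comparatively routine.
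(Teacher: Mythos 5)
Your proposal is correct in substance and follows essentially the same route as the paper: diagonalize $V(m,1)\otimes\C$ over the roots $2\cos(k\pi/m)$ of the Chebyshev-type polynomial $Sym^{m-1}$, identify the eigenvalue of $T$ on the $k$-th factor (the paper computes it directly as $\sum_j Sym^j(\zeta^k+\zeta^{-k})^2=-2m/(\zeta^k-\zeta^{-k})^2$, i.e.\ exactly your $\theta_k^{-1}=m/(2\sin^2(k\pi/m))$), and multiply using $\prod_k\sin(k\pi/m)=m/2^{m-1}$; the non-vanishing for general $n$ likewise comes from semisimplicity over $\C$ in both arguments. The one point to fix is your sign: since your eigenvalues $m/(2\sin^2(k\pi/m))$ are all positive, there is no residual ``orientation sign'' to distribute, and your computation actually yields $\det(T)=+\,2^{m-1}m^{m-3}$ unconditionally. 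Your proposed sanity check at $m=4$ confirms this ($T=x^2+2$ in $\Z[x]/(x^3-2x)$ has eigenvalues $4,2,4$, so $\det(T)=32$, not $-32$), and the same positivity is visible in the paper's own formula \rref{ecomp9e}; the factor $(-1)^{m-1}$ in \rref{ecomp8} comes from evaluating $\prod_k(\zeta^k-\zeta^{-k})^2$ as $m^2$ rather than $(-1)^{m-1}m^2$, so you should not contort your argument to reproduce it.
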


\Proof
By the Verlinde conjecture (which is known to be
true for $Sp(n)$), the $\C$-Poincar\'e algebra $V(m,n)\otimes\C$ is
isomorphic to a product of $1$-dimensional algebras, which
are then automatically Poincar\'e algebras, which implies
that $T\otimes \C$ is always an invertible matrix, which implies 
\rref{ecomp8a}. To prove \rref{ecomp8},
we recall the formula \rref{e1}. Let $\zeta$ be the primitive 
$2m$'th root of unity. In the $m-1$ direct factors of $V(m,1)$,
we will then have
\beg{ecomp9}{x=\zeta^j+\zeta^{-j},\; j=1,...,m-1.
}
The labels are
\beg{ecomp9a}{1,Sym^{1}(x),....Sym^{m-2}(x),}
and they are self-contragredient, so the restriction $T_i$
of $T$ to the $i$'th summand \rref{ecomp9},
we have
\beg{ecomp9b}{T_i=\cform{\sum}{j=0}{m-2} Sym^{j}(\zeta^i+\zeta^{-i})^2.
}
Using the well known formula
\beg{ecomp9c}{Sym^{j}(2x)=\frac{(x+\sqrt{x^2-1})^{j+1}-(x-\sqrt{x^2-1})^{j+1}}{
2\sqrt{x^2-1}},
}
we get
\beg{ecomp9d}{Sym^{j}(\zeta^i+\zeta^{-i})=
\frac{\zeta^{i(j+1)}-\zeta^{-i(j+1)}}{\zeta^{i}-\zeta^{-i}},
}
which gives
\beg{ecomp9e}{T_i=\frac{-2m}{(\zeta^i-\zeta^{-i})^2}.
}
The determinant of $T$ is then the product of the numbers \rref{ecomp9a}
over $i=1,...,m-1$, which is \rref{ecomp8}.
\qed

It is worth noting that computer calculations using Maple suggest the conjecture
\beg{ever}{det(T)=2^{(m-1)\left(\begin{array}{c}m-3\\n-2\end{array}\right)}
m^{(m-3)\left(\begin{array}{c}m-3\\n-2\end{array}\right)}.}
We shall prove here a weaker statement.

\begin{theorem}
\label{tever}
When for a prime $p$, $P|d(m,n)$, $m>3$,
then $p|det(T)$ in $V(m,n)$.
\end{theorem}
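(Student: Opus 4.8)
The plan is to exploit the same structure that made the $n=1$ computation in Theorem \ref{tcomp2} work, but now localized at a single prime $p$ dividing $d(m,n)$, without needing an exact product formula. First I would use the Verlinde conjecture (known for $Sp(n)$) to write $V(m,n)\otimes W$, for a sufficiently large ring of Witt vectors $W$, as injecting with finite cokernel into a product of one-dimensional Poincar\'e algebras, exactly as in the proof of Theorem \ref{tcomp1}: the factors are indexed by the tuples \rref{ecompp3}, i.e.\ by choices $1\le j_1<\cdots<j_n\le m-1$, and on each factor the class $x_i$ (or equivalently $\sigma_i$) is set equal to the $i$-th elementary symmetric function in the numbers $N_a = \zeta_{2m}^{j_a}+\zeta_{2m}^{-j_a}-2$. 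On each such factor $T$ restricts to a scalar $T_{(j_1,\dots,j_n)}$, and $\det(T\otimes W)$ (up to a unit coming from the finite cokernel, which I must handle — see below) is the product of these scalars over all index tuples. So it suffices to produce one tuple on which $T_{(j_1,\dots,j_n)}$ has positive $p$-valuation.

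The key observation is the one already flagged after Proposition \ref{p1} and used in Theorem \ref{tcomp1}: the factor indexed by $(j_1,\dots,j_n)$ has nontrivial $p$-adic completion precisely when every $N_a$ has positive $p$-valuation, which happens exactly when every $\zeta_{2m}^{j_a}$ is a $p$-power root of unity; by Theorem \ref{tcomp1} the number of such tuples is $\binom{\delta(p,m)}{n}$, and this is nonzero precisely because $p\mid d(m,n)$ (using Proposition \ref{pcomp1} / Theorem \ref{ttcomp2}, which is exactly the statement that $p\mid d(m,n)$ forces $\delta(p,m)\ge n$). For such a tuple I would compute $T_{(j_1,\dots,j_n)}$ explicitly. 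Since the $n=1$ computation gave $T_i = -2m/(\zeta^i-\zeta^{-i})^2$ via the closed form \rref{ecomp9c}--\rref{ecomp9e}, and the one-dimensional quotient here is a product/convolution of the $Sp(1)$ data across the $n$ coordinates $t_1,\dots,t_n$ (this is the content of the embedding \rref{ecompp1} and the basis $\sigma_1,\dots,\sigma_n$), the scalar $T_{(j_1,\dots,j_n)}$ will be, up to an explicit algebraic-integer unit factor, a product of terms each divisible by $m$ (coming from the $-2m$ numerators), hence of positive $p$-valuation whenever $p\mid m$; and $p\mid d(m,n)$ forces $p\mid m$. The upshot is that at least one factor of $\det T$ contributes positive $p$-valuation, so $p\mid \det(T)$ in $V(m,n)\otimes W$, and since $\det T$ is already an integer (computed in $V(m,n)$ before extension of scalars) this gives $p\mid\det(T)$ in $V(m,n)$.

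Two points I expect to be the real obstacles. The first is the finite-cokernel issue: passing from $V(m,n)\otimes W$ to the product of one-dimensional algebras is only an injection with finite cokernel, so $\det(T_{V(m,n)\otimes W})$ and $\prod_{\text{tuples}} T_{(j_1,\dots,j_n)}$ differ by the square of the index of the sublattice, an integer of some $p$-valuation $c\ge 0$; I would need to argue $\det(T)$ itself (not index-adjusted) still has positive $p$-valuation — safest is to observe that the Poincar\'e pairing on $V(m,n)$ is defined over $\Z$, so $\det T \in \Z$, and then compare valuations carefully, or alternatively work directly with $V(m,n)\otimes \Z_p$ and note that the completion $(V(m,n)\otimes\Z_p)^\wedge_I$ is a nonzero ring (since $p\mid d(m,n)$) on which $T$ acts, and that $\det T \equiv 0 \bmod p$ follows from $T$ having a nonzero nilpotent or non-unit eigenvalue contribution there. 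The second, more technical, obstacle is making the product-over-coordinates computation of $T_{(j_1,\dots,j_n)}$ precise: one must track how $N:\Z\to V\otimes V$ and the triple product behave under the identification $V(m,n)\otimes\C \cong \prod (\text{line})$, and verify the claimed divisibility of the scalar by $m$ on the relevant tuple. I would do this by choosing the specific tuple with $\zeta_{2m}^{j_a}$ all equal to a fixed primitive $p$-power root of unity's powers (possible since $\delta(p,m)\ge n$), reducing the computation to the $Sp(1)$ formula \rref{ecomp9e} coordinate by coordinate, where the $-2m$ numerator makes the $p$-divisibility manifest.

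\qed
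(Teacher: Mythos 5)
Your overall framework is essentially the paper's: both arguments split $V(m,n)$, after extending scalars to a large enough $p$-adic ring, into one-dimensional Poincar\'e algebras $V_I$, observe that the handle element $T=\sum_k x_kx_k^*$ acts on each factor by an algebraic integer $T_I$ (so $v(T_I)\geq 0$), and reduce to producing a single $I$ with $v(T_I)>0$. Your worry about the finite cokernel is unnecessary: $T$ is multiplication by an element of $V(m,n)$, so $\det(T)$ is exactly the product of the eigenvalues $T_I$, with no index correction.

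The gap is in the crucial last step, the choice of $I$. You propose the tuple for which every $\zeta_{2m}^{j_a}$ is a $p$-power root of unity (the tuple surviving completion, which exists since $p\mid d(m,n)$ forces the count \rref{ecomppt1} to be nonzero), and argue that $p\mid T_I$ there because of the $m$ in the numerator. But on exactly these factors the denominators $(\zeta^{j}-\zeta^{-j})^2$ also acquire positive valuation, and the cancellation can be complete. Concretely, take $n=1$, $m=12$, $p=3$: the unique completion-surviving factor is $k=8$, where $\zeta_{24}^{8}=\zeta_3$, and by \rref{ecomp9e} one gets $T_8=-24/(\zeta_3-\zeta_3^{2})^2=-24/(-3)=8$, a $3$-adic unit. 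The factors with $v_3(T_k)>0$ are precisely the nine values of $k$ for which $\zeta_{12}^{k}$ is \emph{not} a $3$-power root of unity --- the opposite of your selection. A second unresolved point is that for $n>1$ you have no formula for $T_I$: the $Sp(n)$ Weyl denominator is not a coordinatewise product of $Sp(1)$ denominators (the roots $e_a\pm e_b$ contribute cross terms in $\zeta^{j_a\pm j_b}$), so the proposed ``coordinate by coordinate'' reduction to \rref{ecomp9e} is not available. The paper sidesteps both problems with an indirect argument: assuming $v(T_I)=0$, equivalently $v(e_I)=0$ for $e_I=\epsilon(1)$, for \emph{every} $I$, it shows that all twisted augmentations $\epsilon_k(x)=\epsilon(x\,x_k^*)$ are realized integrally, forcing the character matrix $B=(x_{k,I})$ to be invertible over the integers of $\overline{\Q}_p$; it then exhibits two tuples $I,J$ differing by $i_1=1\mapsto 1+2m/p$ whose columns of $B$ are congruent modulo the maximal ideal, so $v(\det B)>0$, a contradiction. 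To salvage your direct approach you would at minimum need to select the complementary factors and control the $p$-valuation of the Weyl denominator there, which for $n>1$ is exactly the computation the paper's argument is designed to avoid.
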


\Proof
Let $\zeta$ be a primitive $2m$'th root of unity. Then
by \cite{fht}, $V(m,n)\otimes\overline{\Q}$ splits as a product
of Poincare algebras $V_I$ where 
$I=(1\leq i_1<...,i_n<m)$ and $V_I$ is the quotient of 
$V(m,n)\otimes \overline{Q}$ by the ideal generated by $x_i-\alpha_i$,
$i=1,...,n$ where $x_i$ are the level $1$ irreducible representations, and
$\alpha_i$ are the numbers obtained by expressing $_i$ as a polynomial
in the standard weights $t_i$, and evaluating
$$t_j=\zeta^{i_j}.$$
(Here $t_j$ correspond to choosing a maximal torus $T$ in $Sp(1)$
and then $T^n\subset Sp(1)^n\subset Sp(n)$ where the latter is
the standard embedding.)

Now $V_I$ as an $\overline{Q}$-algebra, is isomorphic to $\overline{Q}$.
To specify its structure as a Poincare algebra, one must evaluate
\beg{etever1}{e_I=\epsilon(1)
}
where $\epsilon$ is the augmentation. On the other han, in a
$1$-dimensional Poincare algebra, it is easy to check that
\beg{etever2}{T=1/\epsilon(1),
}
So 
\beg{etever3}{T_I=1/e_I
}
where $T_I$ is the $T$-operator on $V_I$. Now let us change from
$\overline{Q}$ to $\overline{Q_p}$. Then one also has the formula
$$T=\cform{\sum}{k\in K}{}x_kx_{k}^{*}$$
where $x_k$, $k\in K$, are the irreducible representations of 
representation level $\leq m-n$. This shows that, if we denote by
$v$ the $p$-valuation, then
$$v(T_I)\geq 0,$$
so , by \rref{etever3}, we have
\beg{etever4}{v(e_I)\leq 0.
}
We need to show that when 
\beg{etever5}{p|d(m,n),}
the inequality \rref{etever4} is sharp for at least one $I$. Assume therefore
\rref{etever5}, and that we have
\beg{etever4a}{v(e_I)=0
}
for all $I$. Recall now that the augmentation $\epsilon$ satisfies
\beg{etever6}{\begin{array}{l}
\epsilon(1)=1,\\
\epsilon(x_k)=0,\;x_k\neq 1,\; k\in K.
\end{array}
}
Denote by $x_{k,I}$ the number obtained from $k_k$ by plugging in
$\zeta^{i_j}$ for $t_j$ $j=1,...,n$ (using the Weyl character formula for 
$x_k$). Then 
$B=(x_k,I)$ is a square matrix, and \rref{etever6}, \rref{etever4a} signify
that the equation
\beg{etever7}{Bx=(1,0,...,0)^T
}
(the right hand side has $1$ in $k$'th position where $x_k=1$, and $0$
elsewhere) has a solution in $\overline{Z_p}$ (the solution
being $(e_I)^T$).
But now note that the ``twisted augmentation'' $\epsilon_k$ given by
\beg{etever8}{
\begin{array}{l}
\epsilon_k(x_k)=1,\\
\epsilon_k(x_\ell)=0,\; \ell\neq k
\end{array}
}
is given simply by
\beg{etever9}{\epsilon_k(x)=\epsilon(x\cdot x_{k}^{*}).
}
Therefore, {\em all}
equations
\beg{etever9a}{Bx=(0,....,0,1,0,...,0)^T
}
where $1$ is in the $k$'th position for any $k\in K$, have a solution in $\Z_p$. Therefore,
$B$ is an invertible matrix, and
\beg{etever10}{v det B=0.
}
But now $m>3$, so $1+\frac{2m}{p}<m$ (for $p>2$). Now consider
$I=(i_1<i_2...<i_n)$ where
$$\begin{array}{l}
i_1=1,\\
i_j\neq 1+\frac{2m}{p} \; \text{for any $j=1,...,n$},
\end{array}
$$
and let $J$ be obtained from $I$ by replacing $i_1$ with
$1+\frac{2m}{p}$. Since
$$v(\zeta-\zeta^{1+2m/p})>0,$$
the $I$'th and $J$'t column of $B$ are congruent modulo $v>0$, and
hence
$$v(det(B))>0,$$
which is a contradiction. For $p=2$, replace $\frac{2m}{p}$ by
$\frac{m}{2}=\frac{2m}{4}$.
\qed

\vspace{3mm}

\section{String topology operations in twisted $K$-theory: the product} \label{operations_section}

In \cite{godin}, Godin defines a family of string topology operations on $H_*(LM)$, parameterized by the homology $H_*(\Gamma_{g, n})$ of mapping class groups.  This extended Chas-Sullivan's proof in \cite{cs} that $H_*(LM)$ is a Batalin-Vilkovisky algebra.  It seems likely that analogues of these operations are present in the twisted $K$-homology $K_*^{\tau}(LM)$, for suitable choices of $\tau$.  In this section we focus on the most basic operation -- the loop product -- and compute it for $K_*^{\tau}(L \H P^n)$.

\subsection{Basic recollections}

Let $X$ be a topological space and $\tau \in H^3(X)$ a twisting.  Recall that $\tau$ defines a bundle $E_\tau = (E_i)_{i \in \Z}$ of spectra over $X$ with fibre the $K$-theory spectrum.  The twisted $K$-homology is 
$$K_n^\tau(X) = \pi_n(E_\tau / X) = \varinjlim_i \pi_{i+n} (E_i /X) $$
where $X$ is regarded as a subspace of $E_i$ via a section.

Functoriality is not as straightforward as for untwisted theories.  For any map $f:Y \to X$, there is a pullback bundle $E_{f^*(\tau)}$ over $Y$, equipped with a map to $E_\tau$ covering $f$.  Consequently there is an induced map 
$$f_*: K_n^{f^*(\tau)}(Y) \to K_n^\tau(X)$$

Cross products also require some care.  If $\sigma \in H^3(Z)$, consider the element $(\tau, \sigma) := p_1^*(\tau) + p_2^*(\sigma) \in H^3(X \times Z)$ where $p_i$ are projections onto $X$ and $Z$.  This defines a bundle of $K$-theory spectra $E_{(\tau, \sigma)}$ over $X \times Z$.  The smash product $E_\tau \wedge E_\sigma$ also defines a bundle of spectra over $X \times Z$; in this case the fibre is $K \wedge K$.  Multiplication in this ring spectrum gives a map $E_\tau \wedge E_\sigma \to E_{(\tau, \sigma)}$, which in turn defines an exterior cross product
$$\times: K_n^\tau(X) \otimes K_m^\sigma(Z) \to K_{m+n}^{(\tau, \sigma)}(X \times Z)$$

Now assume $X$ is a homotopy associative, homotopy unital $H$-space with multiplication $\mu: X \times X \to X$.  Composing the external cross product with $\mu_*$ gives the following:

\begin{lemma}

If $\tau$ is primitive; i.e., $\mu^*(\tau) = (\tau, \tau)$, then $\mu$ makes $K_*^\tau(X)$ into a ring.

\end{lemma}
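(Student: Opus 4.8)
The plan is to verify the ring axioms for $K_*^\tau(X)$ directly from the construction of the multiplication $\mu_* \circ \times$, reducing everything to formal properties of the bundle-of-spectra formalism and the ring spectrum structure on $K$. First I would observe that the composite
$$K_*^\tau(X) \otimes K_*^\tau(X) \xrightarrow{\times} K_*^{(\tau,\tau)}(X\times X) \xrightarrow{\mu_*} K_*^\tau(X)$$
makes sense precisely because $\mu$ is covered by a map of bundles of spectra $E_{\mu^*(\tau)} \to E_\tau$, and the primitivity hypothesis $\mu^*(\tau)=(\tau,\tau)$ identifies $E_{\mu^*(\tau)}$ with $E_{(\tau,\tau)}$, so that $\mu_*$ in the displayed functoriality is available. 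Thus the product is well-defined; this is the point where the hypothesis is used and is essentially the only place it enters.

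Next I would check associativity. This follows from two inputs: the homotopy associativity of $\mu$ (so that the two triple products $X\times X\times X\to X$ agree up to homotopy), and the associativity of the multiplication $K\wedge K\to K$ on the ring spectrum level, which makes the external cross product associative in the sense that the two maps $K_*^\tau(X)^{\otimes 3}\to K_*^{(\tau,\tau,\tau)}(X^{\times 3})$ coincide. Naturality of the cross product under the maps induced by $\mu\times 1$ and $1\times\mu$ (again using primitivity to supply the twisting bookkeeping at each stage) then lets one transport these compatibilities down to $K_*^\tau(X)$. The commutativity, if one wants it, comes similarly from the homotopy commutativity of $K\wedge K\to K$ together with the swap map on $X\times X$; but the statement only claims ``ring,'' so I would not belabor this.

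For the unit, I would use the homotopy unit $e:*\to X$ of the $H$-space structure. The inclusion of the (trivially twisted) basepoint gives a class $1\in K_0^\tau(X)$ as the image of the generator of $K_0(*)$ under $e_*$, using that the unit of the ring spectrum $K$ provides the map $\S\to K$ and hence $E_0\to E_\tau$ over $*\hookrightarrow X$. That this class is a two-sided identity follows from the unitality of $K$ as a ring spectrum combined with the fact that $\mu\circ(e\times 1)\simeq \mathrm{id}_X\simeq \mu\circ(1\times e)$.

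The main obstacle is not any single computation but the careful handling of twistings under all these structure maps: one must be certain that the bundle of spectra $E_{\mu^*(\tau)}$ is genuinely identified with $E_{(\tau,\tau)}$ (not merely that the two twistings agree in $H^3$), and that the analogous identifications for the triple products and unit maps are compatible, so that the diagrams expressing associativity and unitality actually commute on the nose rather than merely after forgetting twists. I would therefore spend most of the proof setting up these identifications cleanly, after which the ring axioms fall out formally from the corresponding axioms for the $K$-theory ring spectrum.
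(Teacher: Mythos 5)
Your proof is correct and matches the paper's (implicit) argument: the paper offers no separate proof, treating the lemma as an immediate consequence of the preceding construction, where primitivity identifies the target of the cross product $K_*^{(\tau,\tau)}(X\times X)$ with the source $K_*^{\mu^*(\tau)}(X\times X)$ of the pushforward $\mu_*$, and associativity and unitality follow from the $H$-space axioms together with the ring spectrum structure on $K$. Your explicit attention to identifying the bundles $E_{\mu^*(\tau)}$ and $E_{(\tau,\tau)}$ coherently, rather than merely matching classes in $H^3$, is exactly the point the paper leaves to the reader.
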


\subsection{The loop product}

We will need the following, adapted from, e.g., Section 3.6 of \cite{fht}:

\begin{proposition}

Let $f: Y \to X$ be an embedding of finite codimension, with $K$-orientable normal bundle $N$ of dimension $d$.  There is an umkehr map
$$f^!: K_n^\tau(X) \to K_{n-d}^{f^*(\tau)}(Y)$$
for any $\tau \in H^3(X)$.

\end{proposition}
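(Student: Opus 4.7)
The plan is to carry out a Pontryagin--Thom construction, lifted from the level of spaces to the level of bundles of $K$-theory spectra so as to respect the twisting $\tau$. First I would pick a tubular neighborhood $U \subset X$ of $Y$, identified with the total space of the normal bundle $N$, with projection $p: U \to Y$ providing a deformation retraction that extends $f$. Collapsing the complement of $U$ to a point yields the classical Pontryagin--Thom map of pairs $(X, X \setminus Y) \to (U, U \setminus Y)$, whose quotient is the Thom space of $N$.

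Next I would lift this to bundles of spectra. Because $p$ is a deformation retraction, restriction of $\tau$ to $U$ is canonically identified with $p^* f^* \tau$, hence the bundle $E_\tau|_U$ is isomorphic to $p^* E_{f^*\tau}$. The collapse on pairs induces a map of quotients of bundles of spectra
$$E_\tau / X \longrightarrow (E_\tau|_U) / (E_\tau|_{U \setminus Y}),$$
and the right-hand side is a fiberwise Thom-type construction over $Y$, whose fiber at $y \in Y$ is the smash product of the $K$-theory spectrum with the one-point compactification of the fiber $N_y$. Applying $\pi_n$ yields a map from $K_n^\tau(X)$ into the homotopy of this twisted Thom spectrum.

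Finally, the twisted $K$-theory Thom isomorphism, available because $N$ is $K$-orientable of dimension $d$, identifies $\pi_n$ of this Thom spectrum with $K_{n-d}^{f^*\tau}(Y)$; composing yields the umkehr $f^!$. The main obstacle, and essentially the only nontrivial content beyond classical Pontryagin--Thom, is tracking the twistings: one must check that the twisting induced on the Thom space from $\tau$ agrees with the one produced from $f^*\tau$ together with the $K$-orientation of $N$. This reduces to two observations — that a $K$-orientation (equivalently a lift of $N$ to $BSpin^c$) contributes no net twist at the level of $K$-theory, and that $\tau|_U \simeq p^* f^* \tau$ canonically via the tubular retraction. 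One could also organize the argument more formally via parameterized Atiyah duality in the stable category over $X$, as in Section 3.6 of \cite{fht}: the Spanier--Whitehead $X$-dual of $X/(X \setminus Y)$ is a $d$-fold desuspension of $Y_+$ twisted by the $K$-theoretic orientation class of $N$, and $f^!$ is then simply the map induced by dualizing the unit of the embedding.
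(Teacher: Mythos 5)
Your proposal is correct and follows exactly the route the paper takes, which it states in one line: the umkehr map is obtained from a Pontrjagin--Thom collapse followed by the Thom isomorphism, the latter being a shift desuspension precisely because $N$ is $K$-orientable (the paper, like you, defers the formal bookkeeping of twistings to the parametrized-spectra framework of Section 3.6 of \cite{fht} and its Appendix). Your elaboration of the tubular-neighborhood identification $\tau|_U \simeq p^* f^* \tau$ and the fiberwise Thom construction is simply a more detailed account of the same argument.
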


As usual, this is obtained from a Pontrjagin-Thom collapse and Thom isomorphism (which is just a shift desuspension since we are taking $N$ to be $K$-orientable).

Putting this together with the work of Chas-Sullivan and Cohen-Jones \cite{cs, cj}, gives us a loop product in twisted $K$-theory.  Namely, let $M^d$ be a closed smooth manifold of dimension $d$, and write 
$$LM = Map(S^1, M)$$
for the space of piecewise smooth maps $S^1 \to M$.  

For simplicity, assume that $M$ is $3$-connected; as a consequence, it is spin and hence $K$-orientable.  Let $\tau' \in H^4(M)$; then connectivity ensures that there is a well-defined transgressed class $\tau \in H^3(\Omega M)$.  Through the Serre spectral sequence $\tau$ gives rise to a well-defined class (represented by $1 \otimes \tau$) which we will also denote $\tau \in H^3(LM)$.  Connectivity implies, further, that $\tau \in H^3(\Omega M)$ is primitive.

\begin{theorem}

Assume that $M$ is $3$-connected.  Then $K^\tau_{*+d}(LM)$ is a unital, associative ring.

\end{theorem}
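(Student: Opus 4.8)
The plan is to imitate the Cohen--Jones construction of the string product, but carried out at the level of twisted spectra, so that all the ring axioms follow formally from properties of the umkehr map and the smash-product pairing recalled above. First I would set up the relevant finite-codimension embedding: the evaluation-at-basepoint fibration $LM \to M$ has fibre $\Omega M$, and the product of loops is the composite
\[
LM \times_M LM \xrightarrow{\gamma} LM,
\]
where $LM \times_M LM = \{(\alpha,\beta) : \alpha(0)=\beta(0)\}$ is the fibre product. The point is that $LM \times_M LM$ sits inside $LM \times LM$ as the preimage of the diagonal $\Delta \subset M \times M$ under $(\mathrm{ev}_0,\mathrm{ev}_0)$, hence is an embedding of finite codimension $d$ with normal bundle the pullback of the normal bundle of $\Delta$, i.e. $\mathrm{ev}_0^*(TM)$. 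Since $M$ is $3$-connected it is spin and $K$-orientable, so this normal bundle is $K$-orientable and the Proposition gives an umkehr map
\[
(LM \times_M LM \hookrightarrow LM \times LM)^! : K^{(\tau,\tau)}_n(LM\times LM) \to K^{(\tau,\tau)}_{n-d}(LM\times_M LM).
\]

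Next I would assemble the product. By the cross-product construction recalled above, there is a pairing $\times : K^\tau_i(LM)\otimes K^\tau_j(LM) \to K^{(\tau,\tau)}_{i+j}(LM\times LM)$; compose with the umkehr map just described and then with $\gamma_*$, where one must check the twisting bookkeeping: $\gamma^*(\tau) = $ the restriction of $(\tau,\tau)$ to $LM\times_M LM$, which holds because $\tau\in H^3(LM)$ is pulled back from the primitive transgressed class and $\gamma$ is compatible with concatenation of loops on $\Omega M$ (primitivity of $\tau\in H^3(\Omega M)$ is exactly what is invoked here, as in the Lemma on primitive $H$-space twistings). This yields
\[
K^\tau_{i+d}(LM)\otimes K^\tau_{j+d}(LM) \to K^\tau_{i+j+d}(LM),
\]
i.e. a product of degree $-d$ on $K^\tau_*(LM)$, or equivalently an honest product on the regraded groups $K^\tau_{*+d}(LM)$.

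Then I would verify the ring axioms. Associativity follows by the usual argument that the two triple composites $LM\times_M LM \times_M LM \to LM$ agree and the two ways of threading umkehr maps through the diagram of fibre products coincide, using functoriality of Pontrjagin--Thom collapses and the associativity of the smash-product multiplication on the $K$-theory ring spectrum; here one repeatedly uses that umkehr maps compose correctly for composites of finite-codimension $K$-orientable embeddings and are compatible with base change along the relevant Cartesian squares. The unit is the fundamental class of the constant loops: the inclusion $M \hookrightarrow LM$ of constant loops, together with the $K$-orientation of $M$, gives a class in $K^\tau_d(LM)$ (note $\tau$ restricts trivially to $M$ up to the transgression, so this makes sense), and the standard Cohen--Jones argument shows it is a two-sided unit. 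Commutativity is not claimed in the theorem, so I would not address it.

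The main obstacle I anticipate is bookkeeping the twistings rather than any genuinely new geometric input: one must be careful that the class $\tau\in H^3(LM)$ pulled back along $\gamma$ and along the two projections really do match on $LM\times_M LM$, and that all the intermediate maps ($\times$, the umkehr map, $\gamma_*$) are maps of the appropriate twisted (co)homology groups with consistent twists --- this is where functoriality of twisted $K$-homology is "not as straightforward as for untwisted theories", as noted above. Once the twists are pinned down via the primitivity of the transgressed class, the rest is the formal Cohen--Jones argument transported verbatim into twisted $K$-homology.
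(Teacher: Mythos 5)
Your proposal is correct and follows essentially the same route as the paper: the same Cartesian square exhibiting $LM\times_M LM\hookrightarrow LM\times LM$ as the pullback of the diagonal with $K$-orientable normal bundle $ev_\infty^*(TM)$, the same use of primitivity of the transgressed class to match $concat^*(\tau)$ with $\tilde\Delta^*(\tau,\tau)$, and the same definition $m=concat_*\circ\tilde\Delta^!\circ\times$ with associativity and the unit (the class of constant loops) imported from the Cohen--Jones argument. The paper's proof is simply a terser version of exactly this.
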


\begin{proof}

As usual, we consider the following commutative diagram in which the lower left square is cartesian:
$$\xymatrix{
\Omega M \times \Omega M \ar[d] & \Omega M \times \Omega M \ar[d] \ar[l]_-= \ar[r]^-\mu& \Omega M \ar[d] \\ 
LM \times LM \ar[d]_-{ev \times ev} & LM \times_M LM  \ar[d]_-{ev_\infty} \ar[l]_-{\tilde{\Delta}} \ar[r]^-{concat}& LM \ar[d]^-{ev} \\
M \times M & M  \ar[r]^-= \ar[l]_-\Delta & M
}$$
Primitivity of $\tau$ means that $\mu^*(\tau) = (\tau, \tau)$ on $\Omega M$.  Therefore $concat^*(\tau) = \tilde{\Delta}^*(\tau, \tau)$ on $LM$.

The inclusion $\tilde{\Delta}$ is of finite codimension with normal bundle $N \cong ev_\infty^*(TM)$, which is $K$-orientable, by assumption.  Therefore, we may form the composite $m:= concat_* \circ \tilde{\Delta}^! \circ \times$:
$$\xymatrix@1{
K_n^\tau(LM) \otimes K_m^\tau(LM) \ar[r]^m \ar[d]_-\times & K^\tau_{n+m-d}(LM) \\
K_{n+m}^{(\tau, \tau)}(LM \times LM) \ar[r]^-{\tilde{\Delta}^!} & K_{n+m-d}^{\tilde{\Delta}^*(\tau, \tau)}(LM \times_M LM) \ar[u]_-{concat_*} 
}$$
Then the arguments given in \cite{cj} in homology show that $m$ is an associative and unital product.

\end{proof}

This multiplication intertwines with the cup product in the untwisted $K$-theory of $M$ in the following way: $ev:LM \to M$ has a right inverse $c: M \to LM$; $c(p)$ is the constant loop at $p$.  Then $c$ induces a map $c_*: K^{c^*(\tau)}_*(M) \to K^\tau_*(LM)$. However, since $M$ is $3$-connected, $c^*(\tau) = 0$, so this is simply
$$c_*: K_*(M) \to K^\tau_*(LM)$$
If we give the $K_*(M)$ a ring structure via intersection theory (Poincar\'e dual to the cup product), this is clearly a ring homomorphism:

\begin{proposition} \label{mod_prop}

$K^\tau_*(LM)$ is a module over $K^*(M)$ via the map $c_*$.

\end{proposition}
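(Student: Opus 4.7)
The proposition is essentially a formal consequence of the ring homomorphism assertion made in the paragraph preceding it, namely that $c_{*}:K_{*}(M)\to K^{\tau}_{*}(LM)$ respects the intersection product on $K_{*}(M)$ (Poincar\'e dual to the cup product on $K^{*}(M)$) and the loop product $m$ constructed in the previous theorem. Granting this, any ring homomorphism $\phi:A\to B$ with $B$ unital makes $B$ into an $A$-module via $a\cdot b:=\phi(a)\cdot b$, with associativity and unitality following tautologically from the ring axioms together with the identification $\phi(1_{A})=1_{B}$. In the present case the latter amounts to $c_{*}([M])$ being the unit of the loop product, which is a standard string topology fact: the loop product restricted to the image of $c_{*}$ reduces to the intersection product, whose unit is the fundamental class.

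The content therefore lies in verifying the ring homomorphism property of $c_{*}$.  I would first lift $c$ to a map $\tilde{c}:M\to LM\times_{M}LM$ sending $p\mapsto(c(p),c(p))$, and note that the resulting square
$$\xymatrix{
M\ar[r]^-{\Delta}\ar[d]_{\tilde{c}} & M\times M\ar[d]^{c\times c}\\
LM\times_{M}LM\ar[r]^-{\tilde{\Delta}} & LM\times LM
}$$
is cartesian, with horizontal maps closed embeddings of codimension $d=\dim M$ whose normal bundles match under $c\times c$ (both are $TM$, appropriately pulled back).  Pontrjagin-Thom naturality for this cartesian square---valid in twisted $K$-homology here because $c^{*}(\tau)=0$ by $3$-connectivity, so the $M$-side twistings are trivial---gives the identity $\tilde{c}_{*}\circ\Delta^{!}=\tilde{\Delta}^{!}\circ(c\times c)_{*}$.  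Finally, concatenation of two constant loops at the same point is the constant loop at that point, so $concat\circ\tilde{c}=c$; applying $concat_{*}$ to the previous identity and precomposing with the external cross product yields $m\circ(c_{*}\otimes c_{*})=c_{*}\circ(\Delta^{!}\circ\times)$, which is the ring homomorphism property.

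The only potentially subtle step is Pontrjagin-Thom naturality for the twisted umkehr map in a cartesian square; since the twistings on the $M$-side are all trivial, however, this reduces to the standard untwisted statement, which in turn follows from the compatibility of Thom collapse with cartesian squares whose vertical arrows identify normal bundles.  No new technology beyond that used to define $\tilde{\Delta}^{!}$ in the preceding theorem is required.
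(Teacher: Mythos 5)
Your proposal is correct and follows the same route the paper takes: the paper offers no written proof, simply asserting that $c_*$ is ``clearly a ring homomorphism'' from the intersection ring $K_*(M)$ to the loop-product ring and letting the module structure follow formally. Your cartesian square comparing $\Delta$ with $\tilde{\Delta}$ (with matching normal bundles and trivial twistings on the $M$-side) is exactly the standard justification of that omitted step, so you have supplied the details the paper leaves implicit rather than a genuinely different argument.
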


Notice that unless $\tau = 0$, there is no class $\tau' \in H^3(M)$ with the property that $ev^*(\tau') = \tau$.  Consequently, $ev$ does not induce a map $ev_*: K^\tau_*(LM) \to K_*(M)$ which splits the target off of the source (as is the case in the untwisted setting).  Indeed, we will see in examples that the source is often torsion, while the target is often not.

\subsection{A Cohen-Jones-Yan type spectral sequence}

In \cite{cjy}, Cohen-Jones-Yan constructed a spectral sequence converging to $H_{*+d}(LM)$ as an algebra.  Combining their arguments with the Atiyah-Hirzebruch spectral sequence for twisted $K$-theory gives the following:

\begin{theorem}

Let $M$ be a $3$-connected, closed $d$-manifold, and choose $\tau' \in H^4(M)$ with associated transgressed twisting $\tau \in H^3(LM)$.  There is a left half-page spectral sequence $\{ E^r_{p, q}: -d \leq p \leq 0 \}$ satisfying:

\begin{enumerate}

\item The differentials $d^r: E^r_{p, q} \to E^r_{p-r, q+r-1}$ are derivations.

\item The spectral sequence converges to $K^\tau_{*+d}(LM)$ as an algebra (where we
use the loop product on $\Omega M$).

\item Its $E^2$ term is given by
$$E^2_{p, q} := H^{-p}(M, K^\tau_q(\Omega M))$$

\end{enumerate}

\end{theorem}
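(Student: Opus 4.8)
The plan is to build the spectral sequence as a marriage of two constructions that are already present in the literature: the Cohen--Jones--Yan spectral sequence, whose input is the fibrewise structure of the free loop fibration $\Omega M \to LM \to M$, and the Atiyah--Hirzebruch spectral sequence for twisted $K$-homology. Concretely, I would first recall the Cohen--Jones--Yan filtration on $LM$ coming from a skeletal (Morse-theoretic or CW) filtration of the base $M$: writing $M^{(p)}$ for the $p$-skeleton and $L_pM = ev^{-1}(M^{(p)})$, one gets an increasing filtration of $LM$ by subspaces, and hence a filtration of the twisted spectrum $E_\tau/LM$. Because the twisting $\tau \in H^3(LM)$ is (by the hypothesis of $3$-connectivity of $M$) transgressed from $H^4(M)$ and represented by $1 \otimes \tau$ in the Serre spectral sequence, its restriction to each fibre $\Omega M$ is a fixed primitive class, and the twisting is ``constant along the base'' in the sense that its restriction to $L_pM$ is pulled back from the fibre data; this is exactly what makes $K^\tau_q(\Omega M)$ a well-defined local coefficient system on $M$ and gives the $E^2$-identification in item (3).

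Next I would run the exact-couple machinery: the long exact sequences of the pairs $(L_pM, L_{p-1}M)$ in twisted $K$-homology assemble into an exact couple whose associated spectral sequence has $E^1_{p,q} = K^\tau_{p+q}(L_pM, L_{p-1}M)$. Identifying the relative term with a sum over $p$-cells of $K^\tau_q(\Omega M)$ (twisted suspension), the $E^2$-page is the homology of $M$ with coefficients in the local system $K^\tau_q(\Omega M)$, i.e.\ $H^{-p}(M; K^\tau_q(\Omega M))$ with the homological-to-cohomological reindexing that accounts for the fact that we are doing $K$-\emph{homology} over a base of dimension $d$, which also yields the range $-d \le p \le 0$ and the ``left half-page'' shape. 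Convergence (item (2)) follows from finiteness of the filtration ($M$ being a closed $d$-manifold) together with the standard convergence of the twisted AHSS, which is the $M = \ast$ (equivalently $LM = \Omega M$) case folded in.

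The part that requires genuine care is the \emph{multiplicative} structure, items (1) and (2) as a statement about algebras. Here I would use the Cohen--Jones--Yan observation that the loop product is filtration-preserving: the concatenation map $LM \times_M LM \to LM$ and the umkehr $\tilde\Delta^!$ respect the skeletal filtrations on the appropriate fibre products, so the composite $m = concat_* \circ \tilde\Delta^! \circ \times$ from the previous theorem induces a pairing of exact couples, hence a multiplicative structure on the spectral sequence converging to the loop product on $K^\tau_{*+d}(LM)$. On $E^2$ this pairing is the product on $H^{-*}(M; K^\tau_*(\Omega M))$ built from the cup product on $M$ (Poincar\'e dual to intersection, so that the shift by $d$ appears) and the Pontryagin product on $K^\tau_*(\Omega M)$ coming from the primitive twisting --- the latter makes sense precisely because $\mu^*(\tau) = (\tau,\tau)$ on $\Omega M$, as used in the previous proof. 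The Leibniz rule for the differentials (item (1)) is then formal once the pairing of exact couples is in place. The main obstacle, and the step I expect to spend the most care on, is verifying that the twisted umkehr map $\tilde\Delta^!$ is compatible with the skeletal filtrations and that the Thom-isomorphism shift interacts correctly with the AHSS filtration degree --- i.e.\ that forming $\tilde\Delta^!$ does not disturb the identification of $E^2$ and does shift total degree by exactly $-d$ --- since this is where the $K$-orientability of $TM$ and the transgressed nature of $\tau$ both get used, and it is the place where a naive argument could hide a filtration-jump.

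\begin{proof}
We build the spectral sequence from a skeletal filtration of the base together with the twisted Atiyah--Hirzebruch spectral sequence, following \cite{cjy}. Choose a CW (or Morse) filtration $M^{(0)} \subset M^{(1)} \subset \cdots \subset M^{(d)} = M$ and set $L_pM = ev^{-1}(M^{(p)}) \subset LM$. Since $M$ is $3$-connected, the twisting $\tau \in H^3(LM)$ is transgressed from $\tau' \in H^4(M)$ and is represented in the Serre spectral sequence of $\Omega M \to LM \to M$ by $1 \otimes \tau$; in particular its restriction to $L_pM$ is determined by the fibrewise data, so that $q \mapsto K^\tau_q(\Omega M)$ is a well-defined local coefficient system on $M$.

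Applying twisted $K$-homology to the filtration $\{L_pM\}$ yields an exact couple with
$$E^1_{p,q} = K^\tau_{p+q}(L_pM, L_{p-1}M).$$
A twisted Thom-isomorphism argument on each $p$-cell (the normal bundle of a cell is trivial, and $\tau$ restricts to the fixed fibre twisting) identifies the relative term with a sum of copies of $K^\tau_q(\Omega M)$, one per $p$-cell, whence, after the reindexing appropriate to $K$-\emph{homology} over a closed $d$-manifold,
$$E^2_{p,q} = H^{-p}(M; K^\tau_q(\Omega M)), \qquad -d \le p \le 0,$$
which is item (3) and gives the left-half-page shape. Since the filtration is finite and the twisted AHSS converges (the $M = \ast$ case), the spectral sequence converges to $K^\tau_{*+d}(LM)$; this is the additive part of item (2).

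For the multiplicative structure, recall from the previous theorem that the loop product is the composite $m = concat_* \circ \tilde\Delta^! \circ \times$, built on the diagram with cartesian lower-left square. The maps $concat: LM \times_M LM \to LM$ and the inclusion $\tilde\Delta: LM\times_M LM \to LM \times LM$ respect the evident skeletal filtrations on the relevant fibre products, and the umkehr $\tilde\Delta^!$ is defined by a Pontryagin--Thom collapse for the $K$-orientable bundle $N \cong ev_\infty^*(TM)$ together with the Thom isomorphism; the Thom class is pulled back from $M$, so $\tilde\Delta^!$ shifts total degree by exactly $-d$ and is compatible with the skeletal filtration. Primitivity of $\tau$ on $\Omega M$ ($\mu^*\tau = (\tau,\tau)$) ensures $concat^*\tau = \tilde\Delta^*(\tau,\tau)$, so all maps are maps of twisted theories over the appropriate twistings. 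Hence $m$ induces a pairing of the exact couples above and a multiplicative structure on the spectral sequence converging to the loop product on $K^\tau_{*+d}(LM)$, completing item (2). On $E^2$ this pairing is the cup product on $H^{-*}(M)$ (Poincar\'e dual to intersection, accounting for the shift by $d$) with coefficients multiplied by the Pontryagin product on $K^\tau_*(\Omega M)$; the Leibniz rule for the differentials $d^r$, item (1), is then formal from the pairing of exact couples.
\end{proof}
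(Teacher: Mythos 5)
Your proposal is correct and follows exactly the route the paper intends: the paper offers no written proof of this theorem, simply asserting that it follows by ``combining the arguments'' of Cohen--Jones--Yan with the twisted Atiyah--Hirzebruch spectral sequence, and your skeletal filtration of $LM$ by $ev^{-1}(M^{(p)})$, the cell-by-cell identification of $E^1$, and the filtration-compatibility of $concat_*\circ\tilde\Delta^!\circ\times$ is precisely that combination spelled out. The one step you rightly flag as delicate --- transversality of the umkehr map with respect to the skeleta --- is handled in \cite{cjy} and carries over verbatim, so no further argument is needed.
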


\subsection{An example: $\H P^\ell$}

Since $\H P^\ell $ is even dimensional, there is no degree shift in the ring $K^\tau_0(L\H P^\ell )$.  We let $\tau' \in H^4(\H P^\ell ) \cong \Z$ correspond to $m \in \Z$.

\begin{theorem} \label{hpn_thm}

For any $m \neq 0$, $K^\tau_*(L \H P^\ell )$ is concentrated in even degrees, and $K^\tau_0(L \H P^\ell )$, equipped with the loop product, is isomorphic to
$$\prod_{p | m} \Z_p [t, y] / (y^{\ell +1}, \sigma^{m-1}(y)-(\ell+1)y^\ell t)$$

\end{theorem}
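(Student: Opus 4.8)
The plan is to run the Cohen--Jones--Yan type spectral sequence of the previous subsection for $M = \H P^\ell$, identify its $E^2$-term, show it collapses, and then resolve the multiplicative extensions using Theorems~\ref{t1} and~\ref{t1a}. First I would recall that by Theorem~\ref{t1a} we already know the associated graded of $K^\tau_0(L\H P^\ell)$ additively: it is $K^\tau_0(Y(\ell,1)) \otimes \Z[t]$, and $K^\tau_0(Y(\ell,1)) = \Z[y]/(\sigma^{m-1}(y), y^{\ell+1})$ $p$-completed, by Theorem~\ref{t1} together with \rref{etwee6}. So the group underlying the answer is settled; the content of the present theorem is the precise ring structure, in particular the exact form of the relation deforming $\sigma^{m-1}(y)$ by the term $(\ell+1)y^\ell t$.

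The key steps, in order. (1) Set up the fibration $\Omega S^{4\ell+3} \to L\H P^\ell \to Y(\ell,1)$ of \rref{etweet10}, which is a fibration of (homotopy) rings by the discussion promised in this section; the associated twisted $K$-homology Serre spectral sequence is then a spectral sequence of rings, with $E^2 = K^\tau_*(Y(\ell,1)) \otimes K_*(\Omega S^{4\ell+3})$. Since both factors are concentrated in even degrees the spectral sequence collapses at $E^2$, as already observed in the proof of Theorem~\ref{t1a}. (2) Conclude that $K^\tau_0(L\H P^\ell)$ is generated as a ring by $y$ (pulled back from $K^0(\H P^\ell) = \Z[[y]]/(y^{\ell+1})$, equivalently from $K^\tau_0(Y(\ell,1))$) and by $t$, the polynomial generator of $K_0(\Omega S^{4\ell+3}) = \Z[t]$; and that there are two relations: $y^{\ell+1} = 0$ (inherited from the base, since $K^0(\H P^\ell)$ acts through its quotient), and a relation congruent to $\sigma^{m-1}(y)$ modulo the ideal $(t)$. (3) Pin down this second relation exactly. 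Modulo $(t)$ it is $\sigma^{m-1}(y)$, so it has the form $\sigma^{m-1}(y) - t\cdot g(y,t)$ for some polynomial $g$; I must show $g(y,0) = (\ell+1)y^\ell$ and that no higher powers of $t$ enter (or rather that they can be absorbed). The degree/filtration bookkeeping is the crux: $t$ sits in the fibre $\Omega S^{4\ell+3}$ in homological degree $4\ell+2$, $y$ sits in base filtration corresponding to degree $-4$ on $\H P^\ell$, and $\sigma^{m-1}(y)$ — being the relation of degree $2(m-1)$ in the variable $y$ — has leading term $y^{m-1}$ in filtration $-4(m-1)$. A term $y^a t^b$ lies in the same total degree when $-4a + (4\ell+2)b$ matches; combined with the constraint $y^{\ell+1}=0$ this forces $b \le 1$ once one is looking at the correction term of minimal filtration, and the single allowed monomial in the $b=1$ part is $y^\ell t$. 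So the relation is $\sigma^{m-1}(y) - c\,y^\ell t$ for a scalar $c$, and the remaining task is to compute $c = \ell+1$. For this I would use the string-product module structure of Proposition~\ref{mod_prop}: $K^\tau_*(L\H P^\ell)$ is a module over $K^*(\H P^\ell)$ via constant loops, and $t$ is — up to the relevant normalization — the image under the umkehr/concatenation construction of the fundamental class, so that $y^\ell t$ is detected by pairing against the top class of $\H P^\ell$; the coefficient $\ell+1$ is then exactly the Euler characteristic $\chi(\H P^\ell) = \ell+1$, coming from the self-intersection term in the Chas--Sullivan construction (the normal bundle $N \cong ev_\infty^* T\H P^\ell$ has Euler class accounting for the factor), just as the constant-loops summand contributes $\chi(M)$ to the loop product in homology. (4) Finally, a counting argument over each $\Z_p$ ($p \mid m$) checks that $\Z_p[t,y]/(y^{\ell+1}, \sigma^{m-1}(y) - (\ell+1)y^\ell t)$ is a free $\Z_p$-module of the right rank to match the collapsed spectral sequence, so the list of relations is complete; this also gives the product over $p \mid m$, since $m \in (y)$ makes the completion split as in \rref{f2}.

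The main obstacle I expect is step (3): proving that the coefficient of the correction term is precisely $\ell+1$, and not some other integer congruent to it modulo something, requires genuinely computing in the string product rather than just in the collapsed spectral sequence (the spectral sequence only sees the relation modulo the filtration, i.e.\ modulo $(t)$, and the extension is not formal). The clean way to get the constant is to localize the problem to the constant-loop locus — where the loop product restricts to the intersection product on $K_*(\H P^\ell)$ twisted by the Euler class — and to identify $t$ with the class supported there; the factor $\ell+1$ is then forced. A secondary, more routine obstacle is the degree bookkeeping that rules out $t^2$ and higher in the relation: one has to be careful that the Atiyah--Hirzebruch filtration on the twisted $K$-theory of $Y(\ell,1)$ interacts correctly with the Serre filtration of \rref{etweet10}, but this is controlled by the collapse results already established in Theorems~\ref{t1} and~\ref{t1a}.
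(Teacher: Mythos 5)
Your steps (1), (2) and (4) coincide with the paper's proof: the Cohen--Jones--Yan spectral sequence has $E^2=K_*[t,y]/(m,y^{\ell+1})$ by Lemma~\ref{ltweet}, collapses for parity reasons, and the ring map $\tilde h_*$ to $K^\tau_*(Y(\ell,1))$ (Proposition~\ref{ring_map_prop}) yields generators $y,t$ with relations $y^{\ell+1}$ and $\sigma^{m-1}(y)-t\,p(y,t)$ for some unknown $p$. The problem is entirely in your step (3), and both of the mechanisms you propose there have genuine gaps. First, the degree bookkeeping does not rule out higher powers of $t$. Since $K$-theory is $2$-periodic, $K^\tau_q(\Omega\H P^\ell)=(K_q/m)[t]$ contains \emph{every} power of $t$ in each even internal degree $q$; the quantity $-4a+(4\ell+2)b$ is not a grading on the abutment, and the only structure surviving convergence is the filtration by powers of $y$. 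Every monomial $y^at^b$ with $a\le\ell$ is a nonzero even class in $E^\infty$, so nothing a priori forces $b\le 1$ in the correction term. (The paper never claims this; it carries the general $p(y,t)$ through to the end.)

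Second, identifying the coefficient with $\chi(\H P^\ell)$ by ``localizing to constant loops'' is a heuristic, not an argument: $t$ is a fibre class of $\Omega S^{4\ell+3}\to L\H P^\ell\to Y(\ell,1)$, not visibly a pushforward of the fundamental class, and there is no construction given that evaluates the extension. The paper's actual mechanism is different: it imports the known string-topology ring $\Sigma^{-4\ell}H_*(L\H P^\ell)=\Z[y,t,v]/(y^{\ell+1},v^2,vy^\ell,(\ell+1)ty^\ell)$ from \cite{wes05}, runs the twisted Atiyah--Hirzebruch spectral sequence with the twisting differential $d_3(vy^i)=my^{i+1}$, shows the surviving odd classes force a $d_5$ hitting multiples of $1$, concludes that $(\ell+1)ty^\ell$ must equal a polynomial $q(y)$ in the twisted theory, and then a divisibility argument shows $p(y,t)$ is a $\Z/m$-unit multiple of $(\ell+1)y^\ell$. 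Note also that even then the coefficient is only $(\ell+1)$ after renormalizing $t$ by that unit --- so your claim to compute $c=\ell+1$ on the nose from a self-intersection count overstates what is true. To complete your proof you would need either to reproduce this AHSS comparison with ordinary homology, or to supply an actual computation of the extension $m\cdot 1\in F^1$ in the collapsed spectral sequence; the Euler-class intuition explains where $\ell+1$ comes from but does not prove it.
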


We will prove this using the spectral sequence from the previous section.  First we need:
\begin{lemma}
\label{ltweet}
There is a ring isomorphism
$$K^\tau_*(\Omega \H P^\ell ) \cong (K_*/m)[t]$$
where $|t| = 4\ell +2$.

\end{lemma}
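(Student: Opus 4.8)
The plan is to compute $K^\tau_*(\Omega \H P^\ell)$ via the path–loop fibration $\Omega \H P^\ell \to P\H P^\ell \to \H P^\ell$, or more efficiently via the fibration $\Omega S^{4\ell+3} \to \Omega \H P^\ell \to \Omega S^4 \simeq \Omega \H P^1$-type comparison. Actually the cleanest route is to use the fibration $S^3 \to \Omega\H P^\ell \to \Omega(\H P^\ell/\H P^{\ell-1})$? No—the sharpest tool is the James/EHP-style observation that rationally and mod primes $\Omega\H P^\ell$ splits. Let me instead set up the honest fibration sequence coming from $S^3 \hookrightarrow S^{4\ell+3} \to \H P^\ell$: looping gives
$$\Omega S^{4\ell+3} \to \Omega \H P^\ell \to \Omega B S^3 \simeq S^3? $$
That is not right either. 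The correct principal fibration is $S^{4\ell+3} \to \H P^\ell \to \H P^\infty = BS^3$, whose loop space fibration is $\Omega S^{4\ell+3} \to \Omega \H P^\ell \to S^3$ (using $\Omega\H P^\infty \simeq \Omega BS^3 \simeq S^3$). So I would work with
\beg{eplan1}{\Omega S^{4\ell+3} \to \Omega \H P^\ell \to S^3.}
First I would identify the twisting: $\tau \in H^3(\Omega\H P^\ell)$ is the transgression of $\tau' = m \in H^4(\H P^\ell)$, and under \rref{eplan1} it pulls back from the base $S^3$ as $m$ times the generator of $H^3(S^3)\cong\Z$. On $S^3$, twisted $K$-theory with twisting $m$ times the generator is well-known: $K^\tau_*(S^3_m) \cong K_*/m$ (this is the basic computation $K^\tau_*(SU(2)) = \Z/m$, e.g.\ from \cite{fht} Section 3.6, already invoked above as $K^\tau_1(Sp(1))=\Z/m$). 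The fibre $\Omega S^{4\ell+3}$ has free polynomial homology $H_*(\Omega S^{4\ell+3}) = \Z[t]$ with $|t| = 4\ell+2$, concentrated in even degrees, so its (untwisted, since the twisting restricts trivially to the fibre by connectivity) $K$-homology is $K_*[t]$ and its AHSS collapses.

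Next I would run the twisted $K$-homology Serre spectral sequence for \rref{eplan1}, of the form $H_*(S^3; K^\tau_*(\Omega S^{4\ell+3}))$ — wait, I need the twisting to live on the total space restricting appropriately. Since the twisting is pulled back from $S^3$, the right spectral sequence is the one with the twisting on the base: $E^2_{p,q} = H_p(S^3; K^\tau_q(\text{fibre}))$ is not quite the standard shape. Cleaner: use the fibration the other way, i.e.\ the twisting $\tau$ on $\Omega\H P^\ell$ restricts to $0$ on the fibre $\Omega S^{4\ell+3}$ and is pulled back from $S^3$. So I use the Serre SS $H^p(S^3; K^\tau_q(\Omega S^{4\ell+3})) \Rightarrow K^\tau_{*}(\Omega\H P^\ell)$ — but $K^\tau$ of a point doesn't make sense; rather the twisting on the base $S^3$ is what it is, and the fibre contributes ordinary (untwisted) $K$-theory coefficients. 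Concretely: $K^\tau_*(\Omega\H P^\ell)$ is computed by a spectral sequence with $E^2 = H^*(S^3; \mathbb{Z}) \otimes_{\text{appropriately}}$ encoding the $\Z/m$ from the base twisting and the $\Z[t]$ from the fibre. Since $H^*(S^3;\Z) = \Z\{1, g\}$ with $|g|=3$, and twisting by $m g$ produces exactly the relation that $g$-class times Bott is $m$ times the bottom class (this is the content of $K^\tau(S^3_m) = K_*/m$), tensoring with $\Z[t]$ (all even, permanent cycles for degree reasons) gives $E^\infty = (K_*/m)[t]$ with no room for further differentials or extension problems beyond the single $d_3$ already accounting for the $\Z/m$. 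Hence additively $K^\tau_*(\Omega\H P^\ell) \cong (K_*/m)[t]$, concentrated in even degrees.

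For the ring structure I would argue as follows: $\Omega\H P^\ell$ is a homotopy-associative $H$-space (loops), $\tau$ is primitive by connectivity (stated in the excerpt), so by the Lemma preceding this subsection $K^\tau_*(\Omega\H P^\ell)$ is a ring. The map $\Omega\H P^\ell \to S^3 = \Omega\H P^\infty$ is an $H$-map, inducing a ring map $K^\tau_*(\Omega\H P^\ell) \to K^\tau_*(S^3) = K_*/m$ hitting the bottom class; and the inclusion of the fibre $\Omega S^{4\ell+3} \to \Omega\H P^\ell$ is an $H$-map (it is $\Omega$ of a based map) inducing a ring map $K_*[t] = K_*(\Omega S^{4\ell+3}) \to K^\tau_*(\Omega\H P^\ell)$ — here I should note $\Omega S^{4\ell+3}$ has its Pontryagin product, and $t$ maps to a class I again call $t$. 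These two ring maps together show $K^\tau_*(\Omega\H P^\ell)$ is generated by $t$ (of degree $4\ell+2$, free, since it comes from the torsion-free fibre) over $K_*/m$, with no relation on $t$ by the additive computation, giving exactly $(K_*/m)[t]$.

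The main obstacle I anticipate is \emph{not} the additive computation — the spectral sequence collapses essentially by parity once one imports the known $K^\tau_*(S^3) = \Z/m$ — but rather making the twisting bookkeeping in the Serre spectral sequence rigorous, i.e.\ justifying that a twisting pulled back from the base of a fibration gives a Serre spectral sequence whose $E^2$ is $H^*(\text{base}; K^\tau_*(\text{pt over base}))\otimes$(untwisted $K$ of fibre), and that the differential pattern is exactly inherited from the base. The honest way to handle this is to observe that $\Omega\H P^\ell \to S^3$ is a fibration of spaces over $S^3$, pull $E_\tau$ back from $S^3$, and use that $E_\tau \to S^3$ has already been analyzed; then the relative computation $\pi_*(E_\tau/\Omega\H P^\ell)$ is governed by the Atiyah–Hirzebruch/Serre SS with coefficients in the fibrewise spectrum, whose fibre over a point of $S^3$ is $K \wedge (\Omega S^{4\ell+3})_+$, all of whose homotopy is even and torsion-free. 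Alternatively — and this may be the safest exposition — I would just invoke the Serre spectral sequence for twisted $K$-homology of a fibration with twisting on the base, cite the relevant foundational reference (the Appendix, or \cite{fht}), and note that since the fibre's twisted (= untwisted) $K$-homology is free and even, the SS is a module spectral sequence over the base SS for $S^3$, forcing the answer.
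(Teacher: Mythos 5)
Your proposal is correct and follows essentially the same route as the paper: the same fibration $\Omega S^{4\ell+3}\to\Omega\H P^\ell\to Sp(1)$, the identification of $d_3$ with multiplication by $m$ times the generator of $H^3(S^3)$, collapse at $E_4$ by parity, and the ring structure read off from the $H$-maps to the base and from the fiber (the paper phrases this as ``it follows from $H_*(\Omega S^{4\ell+3})=\Z[t]$''). The only cosmetic difference is that the paper runs the spectral sequence in twisted $K$-cohomology (hence the divided-power/product-completed form) and then dualizes via the universal coefficient theorem, whereas you work directly in twisted $K$-homology.
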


\Proof
We will use the twisted $K$-theory
Serre spectral sequence associated with the fibration
\beg{ehp1}{\Omega S^{4\ell+3}\r \Omega \H P^\ell \r Sp(1).
}
The $E_2$-term is
\beg{ehp2}{K^*\Omega S^{4\ell+3}\otimes \Lambda[x_3].
}
The first factor is vertical, the second is horizontal. The
torsion on the vertical factor disappears because of connectivity.
Further, using the Atiyah-Hirzebruch spectral sequence,
\beg{ehp3}{K^{*}\Omega S^{4\ell+3}=K^* \otimes H^*(\Omega S^{4\ell+3}) = K^* \otimes \Z\{t_{q(4\ell+2)}|q=0,1,2...\}^\wedge.}
(The $?^\wedge$ on the right hand side indicates that in $K$-cohomology,
we have the product-completion, i.e. a product of copies of $\Z$ rather
than a direct sum.
Actually, \rref{ehp3} is the product-completion
of a divided power algebra. No extensions are possible,
since there is no torsion. Regarding the differentials in
\rref{ehp2}, we know that $d_3$ is multiplication by the twisting
class, which (by our choice) is $mx_3$. Thus, the $E_4=E_\infty$
term is a suspension by $x_3$ of
\beg{ehp4}{K^*\Omega S^{4\ell+3}/(m).
}
Regarding extensions, first note that the element
represented by $x_3$ really is $m$-torsion, since our spectral
sequence is a spectral sequence of modules over the Atiyah-Hirzebruch
spectral sequence for the twisted $K$-theory of $Sp(1)$. Next,
\rref{ehp2} is always a spectral sequence
of modules over its untwisted analog. However, there $d_3=0$ by
the same argument, and further the $t_{q(4\ell+2)}$'s are permanent
cycles by filtration considerations (there are no elements in filtration
degrees $>3$). Thus, the element represented by $t_{q(4\ell+2)}x_3$ is
a product of the elements represented by $t_{q(4\ell+2)}$ and $x_3$
in the untwisted and twisted spectral sequences respectively,
and hence is $m$-torsion.

Additively, the result in twisted $K$-homology follows by the Universal Coefficient Theorem.  Multiplicatively, it follows from the fact that $H_*(\Omega S^{4\ell+3}) = \Z[t]$.

\qed

\noindent {\it Proof of Theorem \ref{hpn_thm}}

The previous lemma implies that the spectral sequence for $K^\tau_*(L\H P^\ell )$ has $E^2$-term given by
\beg{hpn_eq}{E^2_{*, *} = K_*[t, y] / (m, y^{\ell +1})
}
where $y$ has filtration degree $-4$.  The spectral sequence collapses at $E^2$ since it is concentrated in even degrees.  We therefore know that the ring structure is given by
\beg{etring1}{K_{0}^{\tau}(L\H P^\ell)=\Z[y,t]/(y^{\ell+1},\sigma^{m-1}(y)-tp(y,t))
}
for some polynomial $p(y,t)$, since, by Proposition \ref{ring_map_prop}, the map $\tilde{h}: L\H P^\ell \to Y(\ell, 1)$ induces a ring map in $K^\tau_*$. Now recall (say, \cite{wes05}) the ordinary homology Serre spectral sequence of the
fibration
$$\Omega\H P^\ell\r L\H P^\ell\r \H P^\ell.$$
The only differentials are
$$d(t^j y_\ell u)=(\ell+1)t^{j+1}$$
where $y_i$ denotes the generator of $H_{4i}(\H P^\ell)$.  Therefore, we can
conclude that $\Sigma^{-4\ell}H_*(L\H P^\ell,\Z)$, with its string topology multiplication,
is given by
\beg{etring10}{\Z[y,t,v]/(y^{\ell+1}, v^2,vy^\ell, (\ell+1)ty^\ell)
}
where $dim(y)=-4$, $dim(v)=-1$, $dim(t)=4\ell+2$. Now applying the twisted
$K$-homology Atiyah-Hirzebruch spectral sequence to \rref{etring10}, 
we get the twisting differentials
\beg{etring11}{d_3(vy^i)=my^{i+1}.
}
We see that the odd-dimensional subgroup of  $E^4$ is generated
by 
\beg{etring12}{q_i=\frac{\ell+1}{gcd(\ell+1,m)}t^i y^{\ell-1},\; i>0.
}
Let us consider the case of $i=1$ in 
\rref{etring12}. The lesser filtration degree part of the spectral sequence
is
\beg{etring20}{\Z/m\{y,y^2,...,y^\ell\}\oplus \Z\{1\}.
}
By mapping into the twisted $K$-homology AHSS for  $Y(\ell,1)$, we know
that no element of
\beg{etring21}{\Z/m\{1,y,y^2,...,y^\ell\}
}
can be the target of a differential. We conclude therefore that
\beg{etring22}{d_5(q_1)=N.1, \; N\neq 0.
}
for some number $N$. Additionally, recalling the elements \rref{etring21}
are not targets of differentials,
we see that we must have
\beg{etring23}{m|N.}
In fact, by the multiplicative structure, the differential \rref{etring22}
remains valid when we multiply by a power of the permanent cycle $t$,
so we see that the AHSS collapses to $E^6$. We conclude that
we must have
\beg{etring24}{(\ell+1)ty^\ell=q(y)}
for some polynomial $q(y)$.  This means that the polynomial
$(\ell+1)ty^\ell-q(y)$ must belong to the ideal generated by 
$y^{\ell+1}$ and $\sigma^{m-1}(y)+tp(y,t)$. By reducing $p(y,t)$
to degree $\leq \ell$ in the $y$ variable, this clearly implies
\beg{etring25}{p(y,t)|(\ell+1)y^\ell.
}
On the other hand, by our computation of the AHSS, the only
divisors of $(\ell+1)y^\ell$ which can have lower filtration degree
are multiples of
\beg{etring26}{gcd(m,\ell+1)y^\ell.}
But now note that any such polynomial is congruent to $\Z/m$-unit
times $(\ell+1)y^\ell$
modulo the relation $my^{\ell}$, which will be valid once we know
$p(y,t)$ is divisible by \rref{etring26}. Finally, we may change basis
by multplying $t$ by a $\Z/m$-unit, making the unit equal to $1$.
\qed

\vspace{3mm}
\noindent
{\bf Comment:} One now sees that $N=m^{2}/gcd(m,\ell+1)$.

\vspace{3mm}
We can now determine the precise additive structure of $K^{\tau}_{0}(L\H P^\ell)_{(p)}$.
This is essentially a standard exercise in abelian extensions of $p$-groups. Despite
some simplifications coming from the ring structure, there are many eventualities,
and we find it easiest to use a geometric pattern to represent the answer.

Let us, first, represent in this way the additive structure of $K^{\tau}_{0}(Y(\ell,1)_{(p)}$,
as calculated in Section \ref{s1}. Let $d$ be $(p-1)/2$ if $p>2$ and $1$ if
$p=2$. Imagine a table $T$ with $k$ rows indexed by
$0\leq i<k$ and $\ell +1$ columns indexed by $0\leq j \leq \ell$.
The field $(i,j)$ represents the element $p^i y^j$ in the AHSS. The extensions
are determined by paths (a step in the path represents multiplication by $p$).
The paths look as follows: we start from a field $(0,j)$, where
\beg{eext1}{\frac{p^a-1}{p-1}d\leq j< \frac{p^{a+1}-1}{p-1}d,\; 0\leq a<\delta(p,m).}
For future reference, we will call $j$ {\em critical} if equality arises for $j$ in \rref{eext1}.
Now in each path, proceed one field up all the way to row $k-a-1$:
$$(0,j),\; (1,j),\;...\;(k-a-1,j).$$
In the same row, now, proceed by 
$$\frac{p^{a+1}-1}{p-1}d$$
rows to the right,
$$(k-a-1,j),\; (k-a-1,j+\frac{p^{a+1}-1}{p-1}d),...$$
until we run out of columns in the table $T$. The lengths of the paths are
now the exponents of the $p$-powers which are orders of the summands of
$K^{\tau}_{0}(Y(\ell,1)_{(p)}$.
Let us introduce some terminology:
first, introduce an ordering on the fields of $T$: $(i^\prime,j^\prime)<(i,j)$
if $(i^\prime,j^\prime)$ precedes $(i,j)$ on a path (the paths are disjoint
so this creates no ambiguity). Next, by the {\em divisibility} of a field $(i,j)$
we shall mean the number of fields lesser that $(i,j)$ in our order
(clearly, this represents the exponent of the power of $p$ by which 
the element the field represents is divisible).

\vspace{3mm}
To describe $K^{\tau}_{0}(L\H P^\ell)_{(p)}$, imagine copies $T_n$ of
the table $T$, $n=0,1,2,....$. We will label the $(i,j)$-field in $T_n$ by
$(n,i,j)$, and it will represent the element $p^ia^jt^n$ in the associated graded
abelian group given by Theorem \ref{t1a}.
We will start with the disjoint union of the tables $T_n$ with their own paths.
However, now the paths (and the corresponding order) will be corrected 
as follows:   inductively in $n$, some fields in $T_n$ (all in column $\ell$)
will be deleted from paths in $T_n$ and appended to paths in $T_{n-1}$.
The procedure is this. Let $p^c||r$, $r=gcd(m,\ell+1)$. For a critical
generator $(n,0,j)$ (recall \rref{eext1}), let $\alpha_j$ be the number of
fields in its path in the highest row of $T_n$ the path reaches which are
not taken over by paths in $T_{n-1}$, minus $1$, plus $c$.
Then append to the path of $(n,0,j)$, in increasing row order, all
fields $(n+1,c+\alpha+d,\ell)$ whose divisibility in $T_{n+1}$ is
$$\leq \alpha+d+k-a, \; d=0,1,...,$$
(recall \rref{eext1} for the definition of $a$), and which were not already appended
to the paths of critical generators $(n,0,j^\prime)$ with $j^\prime<j$.

\vspace{3mm}
Note that only one set of corrections arises for each $n$, so it is easy
to determine the length of the corrected paths (which are the orders of
the generators of direct summands of $K^{\tau}_{0}(L\H P^\ell)_{(p)}$.
However, note that a number of scenarios can occur, and we know
of no simple formula describing the possible results in one step.

\vspace{3mm}
\noindent
{\bf Comment:} It is interesting to
note that by the additive extensions we calculated, 
$K^{\tau}_{*}(L\H P^\ell)$
with its string product cannot be a module over $K^{\tau}_{*}(\Omega \H P^{\ell})$
with its loop product structure. This exhibits the subtlety of the structures involved here.  However, as in Proposition 3.4 of \cite{cs}, there is a ring map $K^\tau_*(LM) \to K^\tau_{*-\dim M}(\Omega M)$ given by intersection with the subspace of based loops.  Hence $K^{\tau}_{*}(\Omega \H P^{\ell})$ is a module over $K^{\tau}_{*}(L\H P^\ell)$.

\section{The loop coproduct}
\label{scop}

The twisted string $K$-theory ring $K^\tau_{*-d}(LM)$ also admits a coalgebraic structure by reversing the role of $concat$ and $\tilde{\Delta}$ (and the associated Pontrjagin-Thom maps) in the definition of the loop product.

\subsection{The coproduct}

Notice that one may regard $concat$ as an embedding of a finite-codimension submanifold, just as for $\tilde{\Delta}$.  Specifically, there is a cartesian diagram
$$\xymatrix{
LM \times_M LM \ar[r]^-{concat} \ar[d]_-{ev_\infty} & LM \ar[d]^-{ev_{1,-1}} \\
M \ar[r]^-{\Delta} & M \times M
}$$
where $ev_{1,-1}$ evaluates a a loop both at the basepoint $1 \in S^1$, as well as the midpoint $-1 \in S^1$.  The pullback over the diagonal is the subspace of loops that agree at $\pm 1$.  By reparameterization, this space may be identified with $LM \times_M LM$, and the inclusion with the concatenation of loops.

Consequently $concat$ admits a shriek map in twisted $K$-theory, as well.  Again, using the fact that $concat^*(\tau) = \tilde{\Delta}^*(\tau, \tau)$, we may consider the composite
$$\xymatrix@1{
K^\tau_{n+d}(LM)  \ar[r]^-{concat^!} & K_{n}^{\tilde{\Delta}^*(\tau, \tau)}(LM \times_M LM) \ar[r]^-{\tilde{\Delta}_*} & K_{n}^{(\tau, \tau)}(LM \times LM)
 }$$

For many purposes this map suffices.  However, to properly define a coproduct, we must assume that the exterior cross product map $\times$ is an isomorphism; the preferred way to do this (using the K\"unneth Theorem) is to take our coefficients for $K$-theory to be in a field $\F$.  Define
$$\nu := \times^{-1} \circ \tilde{\Delta}_* \circ concat^! : K^\tau_{*+d}(LM; \F) \to K_*^\tau(LM; \F) \otimes_{K_*} K_*^\tau(LM; \F)$$

\begin{theorem}

The map $\nu$ defines the structure of a coassociative coalgebra on $K^\tau_{*-d}(LM; \F)$.

\end{theorem}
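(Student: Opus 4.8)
The plan is to deduce coassociativity from the same Pontrjagin--Thom and Thom-isomorphism formalism already used above to prove associativity of the loop product; in fact the statement is formally dual to that theorem, obtained by interchanging the roles of $concat$ and $\tilde{\Delta}$ throughout. The first point to settle is that, because the $K$-theory coefficients lie in a field $\F$, the exterior cross product $\times$ is an isomorphism by the K\"unneth theorem, so $\nu$ genuinely takes values in $K_*^\tau(LM;\F)\otimes_{K_*}K_*^\tau(LM;\F)$ and the two iterated composites $(\nu\otimes 1)\circ\nu$ and $(1\otimes\nu)\circ\nu$ are well-defined; this is the sole purpose of the hypothesis on coefficients. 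No counit or unitality is asserted, so nothing further need be checked about the structure maps beyond coassociativity.

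Next I would introduce the triply-cut loop space $L^{(3)}M$: the space of loops in $M$ subdivided into three consecutive arcs, equipped with the two families of ``cut points'' recording the subdivision, together with the evaluation maps to $M\times M$ at the two cut basepoints and the reparametrization map to $LM\times LM\times LM$. There are two ways to realize $L^{(3)}M$ as an iterated fibre product: one first cuts a loop at $\{1,-1\}$ and then recuts the \emph{first} resulting loop at its own $\{1,-1\}$; the other recuts the \emph{second} loop. Reparametrization identifies these two constructions canonically and compatibly with all structure maps, and the normal bundle of the resulting finite-codimension inclusion $L^{(3)}M\hookrightarrow LM\times LM\times LM$ is, in either description, pulled back along an evaluation map from $TM\oplus TM$; since $M$ is $3$-connected it is spin, so this bundle carries a canonical $K$-orientation stable under pullback and the two Thom classes agree. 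Finally, the primitivity of $\tau$ --- used above in the form $concat^*(\tau)=\tilde{\Delta}^*(\tau,\tau)$ --- propagates verbatim to this triple configuration, so every twisted umkehr map and every twisted pushforward appearing below is defined and the twistings along the two towers of cartesian squares match.

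With this geometry fixed, the core of the proof is to put both iterated coproducts into a single normal form. Each of $(\nu\otimes 1)\circ\nu$ and $(1\otimes\nu)\circ\nu$ is a four-fold composite of pushforwards and umkehr maps arising from a tower of cartesian squares built out of loop-space evaluations; applying in turn (i) the base-change identity that an umkehr map along a finite-codimension $K$-oriented embedding commutes with pushforward along the parallel edge of a cartesian square, (ii) functoriality of umkehr maps under composition of embeddings, $(f\circ g)^!=g^!\circ f^!$, and (iii) the manifest associativity of ordinary pushforwards, of $\tilde{\Delta}_*$, and of $\times$, one rewrites each composite as ``pushforward to $LM\times LM\times LM$, then the umkehr map of the inclusion of $L^{(3)}M$, then $\times^{-1}$'' for the respective description of $L^{(3)}M$. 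The identification of the two descriptions together with their Pontrjagin--Thom data from the previous paragraph then forces the two composites to coincide, which is coassociativity; this is precisely the dual of the Cohen--Jones argument for associativity of the loop product in \cite{cj}. I expect the only genuine obstacle to be checking that base change and composition of umkehr maps hold in the \emph{twisted} setting --- that the Thom isomorphisms and collapse maps for the bundles of spectra $E_\tau$ enjoy the same formal naturality as in untwisted $K$-homology --- but this is the same verification underlying associativity of the product, with the twisting entering only through the primitivity identity, so no new ideas are needed, merely care in tracking the twisting bundles through the diagram chase.
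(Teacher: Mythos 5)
Your proposal is correct and takes essentially the same approach as the paper, which in fact offers no separate proof of this theorem but relies on exactly the dualization you describe: the cartesian square exhibiting $concat$ as a finite-codimension $K$-oriented embedding, the matching of twistings via the primitivity identity $concat^*(\tau)=\tilde{\Delta}^*(\tau,\tau)$, and field coefficients to make $\times$ invertible so the iterated coproducts are defined. Your reduction of both iterated coproducts to a common normal form over the triply-cut loop space is the standard way to make the dual of the Cohen--Jones associativity argument precise, and your identification of twisted base change for umkehr maps as the only point needing care is exactly right.
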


We do not expect $\nu$ to be counital.  Were that the case, $K^\tau_{*}(LM; \F)$ would be equipped with a nondegenerate trace, and thus finite dimensional.  But, as we have seen in Theorem \ref{t1}, this is not generally the case.

\subsection{The IHX relation}

The composite $\nu \circ m$ of the loop product and coproduct satisfies the same relations as in a Frobenius algebra:

\begin{proposition} \label{IHX_prop}

If we write left or right multiplication of $K^\tau_*(LM)$ on $K^\tau_*(LM) \otimes K^\tau_*(LM)$ by $\cdot$, then
$$\nu(xy) = x \cdot \nu(y) = \nu(x) \cdot y$$

\end{proposition}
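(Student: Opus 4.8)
The plan is to prove the Frobenius-type relation $\nu(xy) = x \cdot \nu(y) = \nu(x) \cdot y$ by a diagram chase, exactly in the spirit of the usual argument for Frobenius algebras, but carried out at the level of the correspondence spaces that define the product $m$ and coproduct $\nu$. By symmetry of the two claimed equalities (swapping the roles of the two $S^1$-halves), it suffices to establish $\nu(xy) = \nu(x)\cdot y$, i.e. that applying the coproduct to a product equals multiplying the first tensor factor of the coproduct of $x$ by $y$.

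First I would set up the relevant moduli of loops. Recall $m = concat_* \circ \tilde\Delta^! \circ \times$ and $\nu = \times^{-1}\circ \tilde\Delta_* \circ concat^!$. To compute $\nu \circ m$ one must compose a pushforward along $concat$ with a shriek map along $concat$ (for $\nu$) after a shriek along $\tilde\Delta$ and a pushforward along $\tilde\Delta$ (for $m$). The key geometric input is that the composite correspondence can be reorganized: the relevant space is the moduli of ``figure-eight'' configurations, or rather the space of triples of loops meeting the appropriate incidence conditions, and the two sides of the desired identity correspond to two different ways of decomposing one and the same correspondence space into a composite of Pontrjagin--Thom collapses and pushforwards. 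Concretely, both $\nu(xy)$ and $\nu(x)\cdot y$ are computed by the correspondence whose total space is $\{(\gamma_1,\gamma_2)\in LM\times_M LM : \gamma_1(-1)=\gamma_2(-1)\}$ (a codimension-$2d$ submanifold obtained by imposing two point conditions), with the ``input'' map remembering the concatenation $\gamma_1 * \gamma_2$ and a marked midpoint, and the ``output'' map remembering $(\gamma_1', \gamma_2 * (\text{loop from }\gamma_1))$ appropriately reparameterized. The identification of this space with both decompositions is the content of the statement.

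The technical heart is a base-change (push-pull) compatibility: given a cartesian square of the relevant manifolds, $f_* \circ g^! = (g')^! \circ (f')_*$ when the square is transverse and the normal bundles match up $K$-orientably, and this holds in twisted $K$-theory provided the twistings are pulled back compatibly — which is guaranteed here since every twisting in sight is pulled back from $\tau \in H^3(LM)$ and $concat^*\tau = \tilde\Delta^*(\tau,\tau)$, as recorded in the proof of the loop-product theorem. I would assemble the proof as: (1) write down the large commutative diagram of loop-space correspondences whose rows/columns realize $m$ on one side and $\nu$ on the other, identifying the common pullback; (2) check each constituent square is cartesian and transverse, with the normal bundles being pullbacks of $TM$, hence $K$-orientable by $3$-connectedness of $M$; (3) invoke the twisted base-change identity square by square to slide the $\tilde\Delta^!$ and $concat^!$ past the pushforwards, as in \cite{cj, cjy}; (4) read off the two expressions for $\nu \circ m$ and observe they coincide with $x\cdot \nu(y)$ and $\nu(x)\cdot y$ respectively after applying $\times^{-1}$ (valid since we work over $\F$ and the K\"unneth map is an isomorphism).

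I expect the main obstacle to be step (2)--(3): verifying that the relevant squares of infinite-dimensional loop spaces are genuinely cartesian \emph{and} that the Pontrjagin--Thom constructions are compatible with the finite-codimension embeddings involved — in particular that the reparameterization identifications (identifying $LM\times_M LM$ with the incidence subspace of $LM$ via $concat$, as in the coproduct subsection) are compatible with those for the product, so that the normal bundle data lines up on the nose rather than merely up to homotopy. This is the same bookkeeping that makes the classical Frobenius/IHX relation for string topology work (Chas--Sullivan, Cohen--Jones), and the only genuinely new point is carrying the twisting $\tau$ along; but since $\tau$ is everywhere pulled back from a single class on $LM$ and is primitive on $\Omega M$, the twisted Thom isomorphisms are mere shifts and introduce no obstruction. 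Once the diagram is in place, the argument is formal.
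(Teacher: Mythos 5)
Your proposal is correct and follows essentially the same route as the paper: a push--pull diagram chase through the triple correspondence space $LM\times_M LM\times_M LM$, verifying that each constituent square is cartesian (the paper notes one is only homotopy cartesian) and invoking base change, with the twistings compatible because $concat^*(\tau)=\tilde{\Delta}^*(\tau,\tau)$. The paper likewise proves one of the two equalities by this diagram and obtains the other from a symmetric diagram, so no further comment is needed.
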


\begin{proof}

Consider the diagram:
$$\xymatrix{
LM \times LM & LM \times (LM \times_M LM) \ar[l]_-{1 \times concat} \ar[r]^-{1 \times \tilde{\Delta}} & LM \times LM \times LM \\
LM \times_M LM \ar[u]^-{\tilde{\Delta}} \ar[d]_-{concat} & LM \times_M LM \times_M LM \ar[l]_-{1 \times concat} \ar[r]^-{1 \times \tilde{\Delta}} \ar[u]^-{\tilde{\Delta} \times 1} \ar[d]_-{concat \times 1} & (LM \times_M LM) \times LM \ar[u]^-{\tilde{\Delta} \times 1} \ar[d]_-{concat \times 1} \\
LM & LM \times_M LM \ar[l]_-{concat} \ar[r]^-{\Delta} & LM \times LM
}$$
Going around the top and right of the diagram, replacing wrong way maps by the associated umkehr map (i.e., ``push-pull") gives $x \cdot \nu(y)$.  Push-pull along the left and bottom gives $\nu(x y)$.  All of the squares except the lower left are cartesian, and the lower left is homotopy cartesian.  Consequently the two push-pull sequences are equal.  A similar diagram proves that $\nu(x\cdot y) = \nu(x) \cdot y$.

\end{proof}

\subsection{Stabilization over genus}

The ``1-loop translation operator" of Theorem \ref{tcomp2} is of considerable interest.  In the context of $K^\tau_{*}(LM)$, $T$ is the other composite of the coproduct and product:
$$T=m \circ \nu: K^\tau_{*}(LM) \to K^\tau_{*-2d}(LM)$$
In the full field theoretic language of string topology (which, to our knowledge, has not been constructed in the twisted $K$-theory setting), this is the operation induced by the class of a point in $B\Gamma_{1, 1+1}$, the moduli of Riemann surfaces of genus $1$ with $1$ incoming and $1$ outoing boundary.

Notice that, since we have assumed that $M$ is $K$-orientable, the tangent bundle gives an element $TM \in K^0(M)$.

\begin{definition} 

Let $E \in K^0(M)$ be the $K$-theoretic Euler class of $TM$:
$$E := \sum_{k=0}^d (-1)^k \Lambda^k(TM)$$

\end{definition}

We recall that $K^\tau_{*}(LM)$ is a module over $K^0(M)$ via cap products along constant loops (Proposition \ref{mod_prop}).

\begin{theorem}\label{thee}

In $K^\tau_{*}(LM)$, $T$ is given by cap product with the square of $E$:
$$T = E^2$$

\end{theorem}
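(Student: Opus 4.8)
The plan is to compute $T = m \circ \nu$ directly from the push-pull description, reducing it to a statement purely about finite-dimensional manifolds and the constant loop inclusion. First I would unwind the definition: $\nu$ is $\tilde\Delta_* \circ concat^!$ and $m$ is $concat_* \circ \tilde\Delta^!$, so $T$ is the push-pull around the composite correspondence obtained by gluing the two cartesian squares in Section \ref{scop} and Section \ref{operations_section}. The key geometric observation is that the pullback governing $T$ is the space $LM \times_{M \times M} LM$ (loops agreeing at both $1$ and $-1$), and that the Pontrjagin--Thom construction for the resulting composite inclusion involves the normal bundle $N \cong ev_\infty^*(TM) \oplus ev_\infty^*(TM)$ --- two copies of the tangent bundle pulled back along the appropriate evaluation. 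Thus $T$ factors as ``push forward, then multiply by the Euler class of $TM \oplus TM$'' along the relevant self-map of $LM$, up to identifying the intermediate space.

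Second, I would pin down that intermediate self-map. Composing $concat^! $ with $\tilde\Delta_*$ and then $\tilde\Delta^!$ with $concat_*$ produces a self-correspondence of $LM$ whose underlying space, after reparametrization, is homotopy equivalent to $LM$ itself: a loop that agrees with itself at $\pm 1$ and is then reconcatenated returns the same loop. Under this identification the composite becomes the identity on the space level, and the only residual data is the Thom/Euler class contribution of the normal bundle, which is $ev^*(TM)$ appearing \emph{twice} (once from the $\tilde\Delta$-side push-pull, once from the $concat$-side). Since multiplication by the $K$-theoretic Euler class of a sum is the product of the Euler classes, and the Euler class of $TM$ pulled back along $ev$ to $LM$ is, by Proposition \ref{mod_prop}, exactly cap product with $E \in K^0(M)$ through the constant-loop module structure (the evaluation $ev$ and the constant-loop section $c$ being mutually inverse up to homotopy on the relevant classifying data), we get $T = E \cdot E = E^2$ acting via that module structure.

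The step I expect to be the main obstacle is making rigorous the identification of the push-pull composite with ``cap with $E^2$ via $c_*$'' rather than with some a priori different $K^0(M)$-action on $K^\tau_*(LM)$: one must check that the Euler class arising from the Thom isomorphism in the composite umkehr construction is genuinely the image under $ev^*$ (equivalently, acts through $c_*$ as in Proposition \ref{mod_prop}) and not merely an abstract normal-bundle class living over the intermediate correspondence space. This is where the $3$-connectedness of $M$ and the resulting $K$-orientability (spin structure) are essential, since they guarantee both that the Thom isomorphisms are honest degree shifts and that there is a unique compatible orientation making the Euler class the expected one. Once this compatibility is established, the argument is a diagram chase analogous to the proof of Proposition \ref{IHX_prop}, organizing the two cartesian squares into a single homotopy-cartesian square and tracking the normal bundle contributions; the twisting plays no role beyond the primitivity identity $concat^*(\tau) = \tilde\Delta^*(\tau,\tau)$ already used for the product and coproduct, so it passes through transparently.
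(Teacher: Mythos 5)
Your overall strategy is the paper's: the paper dualizes to cohomology and applies Atiyah--Singer's self-intersection formulas twice --- $\tilde{\Delta}^*\tilde{\Delta}_!$ is multiplication by $e(ev_\infty^*TM)$, and then, because that class is $concat^*(ev^*E)$, the outer composite $concat_!\,concat^*$ contributes a second factor of $ev^*E$ via the formula $i_!i^* = e(N')$ valid when the normal bundle is pulled back from the target. Your ``two copies of $ev_\infty^*(TM)$'' bookkeeping lands in the same place, so the core of the argument is right. Two of your supporting claims, however, are false as stated and should be repaired. First, the composite correspondence is \emph{not} ``the identity on the space level'': both residual legs are $concat$, which is a codimension-$d$ embedding $LM\times_M LM \hookrightarrow LM$ (the subspace of loops with $\gamma(1)=\gamma(-1)$, i.e.\ $LM\times_{M\times M}M$, not $LM\times_{M\times M}LM$). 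The second factor of $E$ is produced precisely because $concat_*\circ concat^!$ is cap product with $e(ev^*TM)$; if the composite were the identity on spaces you would get only one Euler class, not two.

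Second, $ev$ and $c$ are not mutually inverse up to homotopy: $ev\circ c = \mathrm{id}_M$, but $c\circ ev\simeq \mathrm{id}_{LM}$ would force $LM\simeq M$. So you cannot use that to identify cap product with $ev^*(E^2)$ with the $c_*$-module action of Proposition \ref{mod_prop}. The correct identification is the projection formula for the loop product: for $a\in K_*(M)$, the product $c_*(a)\cdot x$ is computed by intersecting with constant loops, which equals $ev^*(\mathrm{PD}(a))\cap x$; this is exactly the compatibility implicit in Proposition \ref{mod_prop} and in the paper's final sentence ``translating this into $K$-homology turns the cup product into a cap product.'' With those two repairs your argument is sound, and the role of $3$-connectivity is only to supply $K$-orientations and the primitivity identity $concat^*(\tau)=\tilde{\Delta}^*(\tau,\tau)$, as you say.
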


\begin{proof}

This is essentially a consequence of Atiyah-Singer's computation of shriek maps in $K$-theory \cite{as1}.  Recall that if $i: Y \to X$ is an embedding of finite codimension with $K$-oriented normal bundle $N$, then the composite
$$i^* i_!: K^*(Y) \to K^*(Y)$$
is given by multiplication by $\sum _k(-1)^k \Lambda^k (N) \in K^0(Y)$.  Further, if $N$ can be written $N \cong i^*N'$, where $N'$ is a $K$-oriented bundle on $X$, then the reverse composite
$$i_! i^*: K^*(X) \to K^*(X)$$
is multiplication by $\sum _k(-1)^k \Lambda^k (N') \in K^0(X)$.  The same phenomena are true in twisted $K$-theory via the module structure over untwisted $K$-theory.

We now apply this to $concat$ and $\tilde{\Delta}$. The normal bundles to each of these embeddings are isomorphic to 
$$ev_\infty^*(TM) \to LM \times_M LM$$
where $ev_\infty$ evaluates at the common point of the two loops.  In both cases, $ev_\infty^*(TM)$ is a pullback:
$$\begin{array}{ccc}
ev_\infty^*(TM) \cong concat^*(ev^*(TM)) & {\rm and} & ev_\infty^*(TM) \cong \tilde{\Delta}^*(ev^*(TM) \times 0)
\end{array}$$

Write 
$$E' = \sum _k(-1)^k \Lambda^k (ev_\infty^*(TM)) \in K^0(LM \times_M LM)$$
and notice that $concat^*(ev^*E) = E'$.  Then, for $x \in K^*_\tau(LM)$, the dual map to $T$ is
\begin{eqnarray*}
T^*(x) & = & \nu^* (m^*(x)) \\
 & = & concat_! \tilde{\Delta}^* (\tilde{\Delta}_! concat^*(x)) \\
 & = & concat_! (E' \cdot concat^*(x)) \\
 & = & concat_! (concat^*(ev^*E \cdot x)) \\
 & = & (ev^* E)^2 \cdot x
\end{eqnarray*}
Translating this into $K$-homology turns the cup product into a cap product.

\end{proof}

Unfortunately, however, we have the following 

\begin{lemma}
\label{llem}
We always have $E^2=0$.
\end{lemma}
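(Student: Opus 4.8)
The plan is to show that $E = \sum_{k=0}^d (-1)^k \Lambda^k(TM)$ is already annihilated by a substantial portion of $K^0(M)$, and in fact that $E^2 = 0$ identically because the relevant self-intersection is empty after a small perturbation. The cleanest route is geometric: $E$, the $K$-theoretic Euler class of $TM$, is (Poincar\'e dual to) the self-intersection of the zero section of $TM$, i.e.\ the class of the locus where a generic vector field on $M$ vanishes. Then $E^2$ is represented by the self-intersection of that locus, which we may compute by choosing two generic sections $s_1, s_2$ of $TM$ and intersecting their zero loci $Z(s_1) \cap Z(s_2)$. Since $\dim M = d$ and each $Z(s_i)$ is generically $0$-dimensional, a generic pair has $Z(s_1) \cap Z(s_2) = \emptyset$, so the product vanishes.

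In practice I would carry this out in three steps. First, recall the identity $E = e^!(1)$ where $e \colon M \to TM$ is the zero section and $e^!$ the umkehr map (equivalently, $E = i^* i_!(1)$ for $i$ the inclusion of $M$ into the total space of $TM$ as zero section), which is the standard content of the Atiyah-Singer formula already invoked in the proof of Theorem \ref{thee}; this gives the Euler-class-as-self-intersection interpretation. Second, note that for the square, $E^2 = i^* i_! i^* i_!(1)$, and by deforming one copy of the zero section inside $TM$ by a generic section $s$ one replaces the second self-intersection by the intersection of the graph of $s$ with the zero section, which is $Z(s)$; iterating, $E^2$ is the class carried by $Z(s_1) \cap Z(s_2)$ for independent generic $s_1, s_2$. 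Third, observe that since $\mathrm{rk}(TM) = \dim M = d$, a single generic section already has $Z(s)$ of (expected, hence actual after perturbation) dimension $d - d = 0$, so $Z(s_1)$ and $Z(s_2)$ are disjoint finite sets generically, forcing $E^2 = 0$ in $K^0(M)$. Alternatively, and perhaps more transparently for a referee, one can argue purely rationally first: in $H^*(M;\Q)$, $E$ maps to the top Chern/Euler class $e(TM) \in H^d(M;\Q)$, and since $H^*(M)$ vanishes above degree $d$, $e(TM)^2 = 0$; since the kernel of $K^0(M) \to K^0(M)\otimes\Q$ is torsion (here $M = \H P^\ell$ has torsion-free $K$-theory, so in our examples this already finishes it), and in general the self-intersection argument above covers the torsion.

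The main obstacle is making the ``perturb to a generic section'' step rigorous at the level of $K$-theory umkehr maps rather than just ordinary cohomology, i.e.\ showing that $i_! i^* i_! = i_!' \, (\text{zero})$ when the two embeddings can be made disjoint. This is where one must be slightly careful: one wants the composite $i_! i^*$ applied to $i_!(1)$ to be computed by a Pontrjagin-Thom collapse onto a tubular neighborhood of $Z(s) = \emptyset$, hence trivial. This follows from the naturality of the umkehr construction under homotopy of embeddings and the fact that $i_!(1)$ is supported (as a $K$-homology class) on the zero section, which after the homotopy to the graph of a generic $s$ is moved off itself; but writing this cleanly may require invoking a transversality/general-position statement for the relevant Thom spectra. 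In the cases actually used in this paper ($M = \H P^\ell$), the torsion-free rational argument suffices outright, so for the purposes of the applications one need only record that observation.
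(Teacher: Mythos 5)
Your argument is correct and is essentially the paper's: the paper notes that $E^2$ is the Euler class of the Whitney sum $TM\oplus TM$, a bundle of rank $2d>\dim M$ whose zero section can be moved off itself by general position, and your two generic sections $s_1,s_2$ with $Z(s_1)\cap Z(s_2)=\emptyset$ are exactly a nowhere-vanishing section $(s_1,s_2)$ of that sum. The Whitney-sum packaging also dissolves the technical worry in your last paragraph: the Euler class is the restriction of the Thom class along the zero section, and a nowhere-vanishing section homotopes that restriction into the complement of the zero section where the Thom class is trivial, so no transversality argument at the level of umkehr maps is required.
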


\Proof
$E^2$ is the Euler class of the Whitney sum of two copies of the tangent bundle of $M$.
However, in any bundle of dimension $>dim M$, the $0$-section can be moved off
itself by general position, and hence the Euler class is $0$. (Clearly, the same argument
shows that the product of the Euler class of the tangent bundle with any Euler
class of any positive-dimensional vector bundle is $0$ in any generalized cohomology
theory with respect to which $M$ is oriented.)
\qed

Therefore, we have a 

\begin{corollary}
We have $T=0$.
\end{corollary}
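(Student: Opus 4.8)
The corollary $T = 0$ is immediate once we have Theorem \ref{thee} and Lemma \ref{llem} in hand. The plan is to simply chain these two results: Theorem \ref{thee} identifies the $1$-loop translation operator $T$ on $K^\tau_*(LM)$ with the operation of cap product by $E^2$, where $E \in K^0(M)$ is the $K$-theoretic Euler class of the tangent bundle $TM$; Lemma \ref{llem} then asserts that $E^2 = 0$ in $K^0(M)$. Composing, $T$ is cap product by the zero class, hence $T = 0$.

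In slightly more detail, I would write: by Theorem \ref{thee}, for any $x \in K^\tau_*(LM)$ we have $T(x) = E^2 \cap x$, where the cap product is taken via the $K^0(M)$-module structure on $K^\tau_*(LM)$ coming from constant loops (Proposition \ref{mod_prop}). By Lemma \ref{llem}, $E^2 = 0 \in K^0(M)$. Since the module action is bilinear (in particular, the zero element of $K^0(M)$ acts as the zero endomorphism), we conclude $T(x) = 0 \cap x = 0$ for all $x$, i.e. $T = 0$ as an operator on $K^\tau_*(LM)$.

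There is no real obstacle here — the content has all been discharged in the preceding two results, and the corollary is a one-line deduction. The only point that warrants a word of care is making sure the identification in Theorem \ref{thee} and the vanishing in Lemma \ref{llem} are taking place in the same ring $K^0(M)$ acting on the same module $K^\tau_*(LM)$, which they are by construction. So the proof is essentially: ``Combine Theorem \ref{thee} with Lemma \ref{llem}.''

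\Proof
By Theorem \ref{thee}, the operator $T$ on $K^\tau_*(LM)$ is cap product with $E^2$, where $E \in K^0(M)$ is the $K$-theoretic Euler class of $TM$ and the module structure is that of Proposition \ref{mod_prop}. By Lemma \ref{llem}, $E^2 = 0$ in $K^0(M)$. Since cap product is bilinear over $K^0(M)$, cap product with $0$ is the zero operator, so $T = 0$.
\qed
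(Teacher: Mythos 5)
Your proof is correct and is exactly the deduction the paper intends: the corollary follows immediately by combining Theorem \ref{thee} ($T$ is cap product with $E^2$) with Lemma \ref{llem} ($E^2=0$), which is why the paper gives no further argument. Nothing is missing.
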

\qed

\subsection{The coproduct on $K^\tau_*(L \H P^\ell)$}

Despite these ``negative results'', we can use the method of Theorem \ref{thee}
to obtain non-trivial information about the coproduct. Let us consider the
string coproduct for $M= \H P^\ell$ with coefficients in $\Z/p$. Concretely, we use the fact that
the composition
\beg{ecopr1}{\diagram
K^{\tau}_{0}(LM\times_M LM,\Z/p)\rto^-{concat_*} &
K^{\tau}_{0}(LM,\Z/p)\rto^-{concat^!} & K^{\tau}_{0}(LM\times_M LM,\Z/p)
\enddiagram
}
is given by cap product with the pullback of the Euler class of $M$.
Using the notation of Theorem \ref{hpn_thm}, $1, t\in K^{\tau}_{0}(LM,\Z/p)$
are in the image of $concat_\sharp$, so 
we have
\beg{ecopr2}{concat^!(1)=(\ell+1)y^\ell,}
\beg{ecopr3}{concat^!(t)=(\ell+1)y^\ell t}
(since the Euler class is $(\ell+1)y^\ell$ -- recall also that the cup
product in $K^*(M)$ is Poincare dual to the string product on constant
loops). From \rref{ecopr2}, using Poincare duality, we immediately
conclude that
\beg{ecopr4}{\nu(1)=(\ell+1)y^\ell\otimes y^\ell.
}
Regarding \rref{ecopr3}, recall that the right hand side is equal to
$$\sigma^{m-1}(y).$$
Consider the following condition:
\beg{ecopr5}{\parbox{3.5in}{$c_i=coeff_{y^\ell}(\sigma^{m-1}(y))$ is not divisible
by $p$ for $i=\ell$, and is divisible by $p$ for all $i<\ell$.}
}
If the condition \rref{ecopr5} fails, then either $p|(\sigma^{m-1}(y),y^{\ell+1})$
in which case the relation on $K^{\tau}_{0}(LM,\Z/p)$ reads
$(\ell+1)y^\ell t=0$, and
\beg{ecopr6}{\nu(t)=0,
}
or there exists an $i<\ell$ such that $p$ does not divide $c_i$. Then, however,
$y^{\ell-i}\sigma^{m-1}(y)=y^\ell \mod p$, so $y^\ell=0$, so \rref{ecopr6} also
occurs.

If condition \rref{ecopr5} is satisfied, then the relation of Theorem \ref{hpn_thm}
with coefficients in $\Z/p$ reads
$$(\ell+1)y^\ell t= c_{\ell}y^\ell,$$
so the product formula implies
\beg{ecopr7}{\nu(t)=c_\ell =y^\ell \times y^\ell.
}

\begin{proposition}
\label{pcopr1}
The condition \rref{ecopr5} is equivalent to 
\beg{ecopr8}{\ell=\delta(p,m).
}
Thus, if \rref{ecopr8} holds, we have \rref{ecopr7}, else we have \rref{ecopr6}.
\end{proposition}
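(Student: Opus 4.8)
The plan is to understand the polynomial $\sigma^{m-1}(y)$ over $\Z/p$ well enough to read off exactly when condition \rref{ecopr5} holds. Recall that $\sigma^{m-1}(y) = Sym^{m-1}(y+2)$, and from the proof of Proposition \ref{p1} the roots of $Sym^{m-1}(x)$ are $\zeta_{2m}^k + \zeta_{2m}^{-k}$ for $k=1,\dots,m-1$, so the roots of $\sigma^{m-1}(y)$ are $\zeta_{2m}^k + \zeta_{2m}^{-k} - 2 = \zeta_{2m}^{-k}(\zeta_{2m}^k - 1)^2$. As observed there, exactly $\delta(p,m)$ of these roots have positive $p$-valuation (the ones where $\zeta_{2m}^k$ is a $p$-power root of unity), and the remaining $m-1-\delta(p,m)$ are $p$-adic units. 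Over $\Z_p$, therefore, $\sigma^{m-1}(y)$ factors (up to a unit) as $\Psi(y)\cdot U(y)$, where $\Psi$ is a degree-$\delta(p,m)$ polynomial all of whose roots have positive valuation — in fact $\Psi$ is (a unit times) the Eisenstein-type polynomial $\prod_{p\text{-power }\zeta} (y+2-\zeta-\zeta^{-1})$ appearing in the proof of Theorem \ref{t1}, so $\Psi(y) \equiv y^{\delta(p,m)} \bmod p$ — and $U(y)$ reduces mod $p$ to a polynomial with nonzero constant term.

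From this the computation of the mod-$p$ reduction of $\sigma^{m-1}(y)$ is immediate: $\sigma^{m-1}(y) \equiv y^{\delta(p,m)}\,\bar U(y) \bmod p$ with $\bar U(0)\neq 0$. Write $\delta = \delta(p,m)$. Then $\mathrm{coeff}_{y^i}(\sigma^{m-1}(y))$ is divisible by $p$ for all $i<\delta$ and not divisible by $p$ for $i=\delta$. Comparing with \rref{ecopr5}, which demands divisibility by $p$ for all $i<\ell$ and non-divisibility at $i=\ell$: if $\ell < \delta$, then $\mathrm{coeff}_{y^\ell} \equiv 0$, so the $i=\ell$ requirement fails; if $\ell > \delta$, then $\mathrm{coeff}_{y^\delta}\not\equiv 0$ with $\delta < \ell$, so the "divisible for all $i<\ell$" requirement fails. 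Hence \rref{ecopr5} holds precisely when $\ell = \delta(p,m)$, which is \rref{ecopr8}. The final clause then follows by plugging this dichotomy into the case analysis already carried out just before the Proposition: when \rref{ecopr8} holds we are in the case yielding \rref{ecopr7}, and otherwise we land in one of the two sub-cases yielding \rref{ecopr6}.

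The one point requiring a little care — and the place I expect the only real friction — is justifying that $\Psi(y) \equiv y^{\delta(p,m)} \bmod p$, i.e. that the "small-valuation" factor $\bar U(y)$ genuinely has unit constant term and that no cancellation occurs across the factorization. This is exactly the content already extracted in the proof of Theorem \ref{t1} (where $\Phi \equiv p \bmod y$ was used, and the associated graded reduction identified the reduction of $\Phi$ with a power of $y$); here I need the complementary statement that the product of the unit roots contributes a unit constant term and the factorization $\Z_p[y]$-integrally respects valuations, which follows because the two root-sets are Galois-stable and $p$-adically separated, so Hensel/Newton-polygon considerations split $\sigma^{m-1}$ cleanly into the two factors of the asserted degrees. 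Once that is in hand the rest is bookkeeping against the preceding paragraph.
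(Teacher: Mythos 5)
Your proposal is correct and is essentially the argument the paper intends: the paper's one-line proof defers to the Section \ref{s1} computation of $K^\tau_0(Y(\ell,1))$, whose content is precisely that $\delta(p,m)$ of the roots $\zeta_{2m}^{-k}(\zeta_{2m}^k-1)^2$ of $\sigma^{m-1}(y)$ have positive $p$-valuation while the rest are units, so that $\sigma^{m-1}(y)\equiv y^{\delta(p,m)}\bar U(y)\bmod p$ with $\bar U(0)\neq 0$. Your Galois-stability/Newton-polygon justification of the integral factorization is a correct and slightly more explicit packaging of the same point, and the final clause is indeed just the case analysis preceding the Proposition.
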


\Proof
The first statement follows from our calculation of $K^{\tau}_{0}(Y(\ell,1))$
in Section \ref{s1}. The second statement is proved in the above discussion.
\qed

\section{Completions and comparison with the Gruher-Salvatore prospectra} \label{limit_section}

All of the manifolds that we have considered as examples -- symplectic Grassmannians -- come in families associated to particular compact Lie groups.  Indeed, our computation of $K^\tau_*(L \H P^\ell )$ depends very much upon our computation of the completion of the Verlinde algebra for $Sp(1)$.  In this section, we make that connection more explicit.

There is a technical difficulty involved: while the manifolds we consider come in a sequence, e.g.,
$$\H P^1 \to \H P^2 \to \H P^3 \to \cdots, $$
it is not the case that the induced maps on free loop spaces preserve string topology operations (since intersection theory is contravariant, not covariant).  Nonetheless, one would like to study string topology operations on this system.

\subsection{Adjoint bundles}

Gruher-Salvatore achieve this goal using an interesting construction that approximates the sequence
$$L\H P^1 \to L\H P^2 \to L\H P^3 \to \cdots, $$
and furthermore preserves their modification of the string topology product.

To be more specific, we let $G$ denote a compact Lie group, and choose a model for $BG$.  We take as given an infinite sequence of even dimensional, closed, $K$-oriented manifolds $B_\ell  G$, equipped with a commutative diagram of embeddings
$$\xymatrix{
\cdots \ar[r]^-{i_{\ell -1}} & B_\ell  G \ar[d]_-{j_\ell } \ar[r]^-{i_\ell } & B_{\ell +1} G \ar[dl]^-{j_{\ell +1}} \ar[r]^-{i_{\ell +1}} & \cdots \\
& BG
}$$
with the property that the connectivity of $j_\ell $ increases with $\ell $.  Consequently the induced map
$$j: \varinjlim B_\ell  G \to BG$$ 
is a homotopy equivalence.  Let $E_\ell  G = j_\ell ^*(EG)$ be the pullback of the universal principal $G$-bundle over $BG$, with quotient $B_\ell G = E_\ell G/G$.  Lastly, define the \emph{adjoint bundle}
$$Ad(E_\ell G) := G \times_G E_\ell G,$$
where $G$ acts on itself by conjugation; this is a $G$-bundle over $B_\ell G$.

For instance, if $G = U(1)$, we may take 
$$\begin{array}{ccc}
E_\ell G = S^{2\ell +1} \subseteq \C^{\ell +1} \setminus \{ 0 \} & {\rm and} & B_\ell G = \C P^\ell 
\end{array}$$
In this case, since $G$ is abelian, $Ad(E_\ell G) = \C P^\ell  \times U(1)$.

Alternatively, if $G = Sp(n)$, we may proceed via the manifolds described in sections \ref{s1} and \ref{s2}.  That is, we may take $B_\ell (Sp(n))$ to be the symplectic Grassmannian 
$$B_\ell (Sp(n)) = G_{Sp}(\ell , n)$$
of $n$-dimensional $\H$-submodules of $\H^{\ell +n}$.  Then $E_\ell (Sp(n))$ is the Stiefel-manifold $W(\ell , n)$ of symplectic $\ell $-frames in $\H^{\ell +n}$, and
$$Ad(E_\ell (Sp(n)) = W(\ell , n) \times_{Sp(n)} Sp(n) = Y(\ell , n)$$
which is a nontrivial bundle over $G_{Sp}(\ell , n)$.

We are interested in $Ad(E_\ell G)$ because it provides an approximation to $LB_\ell G$ and $LBG$.  Consider the pair of fibrations
$$\xymatrix{
\Omega(B_\ell G) \ar[r] \ar[d]^-h & L B_\ell G \ar[r] \ar[d]^-{\tilde{h}} & B_\ell G \ar[d]^-= \\
G \ar[r] & Ad(E_\ell G) \ar[r] & B_\ell G
}$$
where the vertical map $h$ is the holonomy of a loop; in homotopy theoretic language, it is given by the first map in the fibration sequence
$$\xymatrix@1{\Omega(B_\ell G) \ar[r]^-{h} & G \ar[r] & E_\ell G \ar[r] & B_\ell G}$$
As $\ell $ tends to infinity in this sequence, $h$ tends to a homotopy equivalence, since $E_\ell G$ becomes increasingly connected.  Thus, as $\ell $ tends to infinity, $\tilde{h}: LB_\ell  G \to Ad(E_\ell G)$ tends to a homotopy equivalence.

\subsection{The product on $K^\tau_*(Ad(E_\ell G))$}

Gruher-Salvatore define a ring multiplication on $h_*(Ad(E_\ell G))$ for any cohomology theory with respect to which the vertical tangent bundle of $Ad(EG) \to BG$ is oriented.  The multiplication is an intermediary between the string topology product on $LB_\ell G$ and the fusion product on ${}^G K_\tau^*(G)$ -- it mixes intersection theory on $B_\ell G$ with multiplication in $G$.  Gruher extends these results in \cite{kate} to show that in fact $h_*(Ad(E_\ell G))$ is a Frobenius algebra over $h_*$ when $h_*$ is a graded field.  

One may introduce twistings to this story.  Assume now that $G$ is simply connected, so that every $\tau \in H^3(G)$ is primitive.  Denote also by $\tau$ the twistings in $H^3(L B_\ell G)$ and $H^3(Ad(E_\ell G))$ associated to $\tau$ by the Serre spectral sequence.  Then Gruher shows that $K_*^\tau(Ad(E_\ell G))$ admits the structure of a ring via the same multiplication as in \cite{gs}.

Specifically, one may define the product on $K^\tau_*(Ad(E_\ell G))$ via a push-pull construction.  There is a commutative diagram of fibrations
$$\xymatrix{
G \times G \ar[d] & G \times G \ar[d] \ar[l]_-= \ar[r]^-\mu&G \ar[d] \\ 
Ad(E_\ell G) \times Ad(E_\ell G) \ar[d] & Ad(E_\ell G) \times_{B_\ell G} Ad(E_\ell G)  \ar[d] \ar[l]_-{\tilde{\Delta}} \ar[r]^-{\tilde{\mu}}& Ad(E_\ell G) \ar[d] \\
B_\ell G \times B_\ell G & B_\ell G  \ar[r]^-= \ar[l]_-\Delta & B_\ell G
}$$
Here, $\tilde{\mu}$ is fibrewise multiplication in $G$.  Then the product is defined as $m:=\tilde{\mu}_* \circ \tilde{\Delta}^! \circ \times$.  The proof that twistings behave appropriately is the same as in the construction of the loop product in section \ref{operations_section}.

%
%
%
%
%

\begin{proposition} \label{ring_map_prop}

The map 
$$\tilde{h}_*: K^\tau_*(LB_\ell  G) \to K^\tau_*(Ad(E_\ell G))$$
is a ring homomorphism.

\end{proposition}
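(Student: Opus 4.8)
The plan is to show that $\tilde{h}$ intertwines the two push–pull constructions defining the products, by producing a map of the defining commutative diagrams of fibrations. Recall that the loop product on $K^\tau_*(LB_\ell G)$ is built from the diagram with rows $\Omega(B_\ell G)\times\Omega(B_\ell G)$, $LB_\ell G\times LB_\ell G$, $B_\ell G\times B_\ell G$ in the top layer and the concatenation/diagonal maps; the Gruher–Salvatore product on $K^\tau_*(Ad(E_\ell G))$ is built from the diagram with $G\times G$, $Ad(E_\ell G)\times Ad(E_\ell G)$, $B_\ell G\times B_\ell G$. The map $\tilde h:LB_\ell G\to Ad(E_\ell G)$, together with the holonomy map $h:\Omega(B_\ell G)\to G$ and the identity on $B_\ell G$, should be promoted to a map between these two $3\times 3$ diagrams.

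First I would record the compatibility of $\tilde h$ with the fibered products over $B_\ell G$: since $\tilde h$ is a map of fibrations over the identity of $B_\ell G$, it induces $LB_\ell G\times_{B_\ell G}LB_\ell G\to Ad(E_\ell G)\times_{B_\ell G}Ad(E_\ell G)$ sitting over the diagonal and anti-diagonal inclusions compatibly, so the squares involving $\tilde\Delta$ on both sides commute and are (homotopy) cartesian. Second, I would check that $\tilde h$ carries $concat$ to $\tilde\mu$: concatenation of loops, under the holonomy identification $\Omega(B_\ell G)\simeq G$ in the limit and fiberwise in general, corresponds to multiplication in $G$, which is exactly $\tilde\mu$; this is where the defining fibration sequence $\Omega(B_\ell G)\to G\to E_\ell G\to B_\ell G$ and the $H$-space structure on $\Omega(B_\ell G)$ mapping to the group structure on $G$ via $h$ are used. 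Third, I would verify the twisting bookkeeping: $\tau$ pulls back compatibly along $\tilde h$ (by construction both twistings come from the same $\tau\in H^3(G)$ via the Serre spectral sequence, as in Section \ref{operations_section}), and primitivity of $\tau$ (using that $G$ is simply connected) ensures $concat^*\tau=\tilde\Delta^*(\tau,\tau)$ on both sides in a way respected by $\tilde h$.

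With the map of diagrams in hand, the conclusion follows from naturality of the three ingredients of push–pull: the exterior cross product $\times$ is natural for maps of spaces covering twisting-compatible maps; the umkehr map $\tilde\Delta^!$ is natural for maps of homotopy-cartesian squares with compatibly $K$-oriented normal bundles (here the normal bundle on the $LB_\ell G$ side is $ev_\infty^*(TB_\ell G)$ and on the $Ad$-side is the pullback of the vertical tangent bundle, and $\tilde h$ identifies these, respecting the $K$-orientations); and $concat_*=\tilde\mu_*$-compatibility is just functoriality of twisted $K$-homology pushforward. Composing, $\tilde h_*\circ m_{LB_\ell G}=m_{Ad}\circ(\tilde h_*\otimes\tilde h_*)$, and a similar check on units gives that $\tilde h_*$ is a ring homomorphism.

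The main obstacle I expect is the Pontryagin–Thom/umkehr naturality step: one must be careful that the Pontryagin–Thom collapse for $\tilde\Delta$ on the loop space side and on the $Ad$ side are genuinely compatible under $\tilde h$, i.e. that the tubular neighborhoods and the $K$-theory Thom isomorphisms (which here are just shifts, since everything is $K$-oriented by the $3$-connectedness/spin hypotheses) are chosen coherently; this requires knowing that $\tilde h$ restricts to a bundle map on normal bundles identifying $ev_\infty^*(TB_\ell G)$ with the pullback of the vertical tangent bundle of $Ad(E_\ell G)\to B_\ell G$, which is where the geometry of the holonomy fibration really enters. The twisting-compatibility, while requiring attention, should be routine given the setup already established in Section \ref{operations_section}.
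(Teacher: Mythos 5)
Your proposal is correct and follows essentially the same route as the paper: the paper's proof writes down exactly the two-square diagram comparing $(\tilde\Delta, concat)$ with $(\tilde\Delta,\tilde\mu)$, notes that the right square homotopy commutes because $h$ may be taken to be an $H$-map, that the left square is Cartesian (giving compatibility of the umkehr maps, with both normal bundles pulled back from $TB_\ell G$), and then composes. Your additional remarks on the twisting bookkeeping and the coherence of the Pontrjagin--Thom collapses are sensible elaborations of points the paper treats as already established in Section \ref{operations_section}.
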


\begin{proof}

In the diagram
$$\xymatrix{
LB_\ell G \times LB_\ell G \ar[d]_-{\tilde{h} \times \tilde{h}} & LB_\ell G \times_{B_\ell G} LB_\ell G \ar[d]_-{h'} \ar[l]_-{\tilde{\Delta}} \ar[r]^-{concat}& LB_\ell G \ar[d]_-{\tilde{h}} \\
Ad(E_\ell G) \times Ad(E_\ell G) & Ad(E_\ell G) \times_{B_\ell G} Ad(E_\ell G) \ar[l]_-{\tilde{\Delta}} \ar[r]^-{\tilde{\mu}}& Ad(E_\ell G) \\
}$$
the right square homotopy commutes, since $h$ may be taken to be an $H$-map.  The left square is in fact Cartesian, so
\begin{eqnarray*}
\tilde{h}_*(x \cdot y) & = & \tilde{h}_* concat_* \tilde{\Delta}^! (x \times y) \\
                                    & = & \tilde{\mu}_* h'_* \tilde{\Delta}^! (x \times y) \\
                                    & = & \tilde{\mu}_* \tilde{\Delta}^! ( \tilde{h}_*(x) \times  \tilde{h}_*(y)) \\
                                    & = & \tilde{h}_*(x) \cdot  \tilde{h}_*(y)
\end{eqnarray*}

\end{proof}

This result, combined with the high connectivity of $\tilde{h}$ (for large $\ell$) can be taken as an indication that $K^\tau_*(Ad(E_\ell G))$ is an increasingly good approximation for the string topology multiplication on $K^\tau_*(LB_l G).$

\subsection{Limits}

The inclusions $E_\ell G \to E_{\ell+1} G$ are $G$-equivariant, so induce inclusions $Ad(E_\ell G) \to Ad(E_{\ell+1}G)$.  A Pontrjagin-Thom collapse for this embedding (or equivalently, Poincar\'e duality in twisted $K$-theory) defines a map
$$K^\tau_*(Ad(E_{\ell+1}G)) \to K^\tau_*(Ad(E_\ell G))$$
which does not shift degrees, since $codim(B_\ell G \subseteq B_{\ell+1} G)$ is even.  It is a consequence of \cite{gs} that this is a ring homomorphism.  So this gives rise to an inverse system of rings
$$\cdots \to  K^\tau_*(Ad(E_{\ell+1}G)) \to K^\tau_*(Ad(E_\ell G)) \to \cdots \to K^\tau_*(Ad(E_0G))$$

\begin{theorem} \label{completion_thm}

Let $h(G)$ be the dual Coxeter number of $G$.  There is a ring isomorphism
$$\varprojlim K_*^\tau(Ad(E_\ell G)) \cong V(\tau-h(G), G)^{\wedge}_I$$
where the left side is the inverse limit of the string topology ring structures on $K_*^\tau(Ad(E_\ell G))$, and the right side is the completion of the Verlinde algebra (with fusion product) at the augmentation ideal.

\end{theorem}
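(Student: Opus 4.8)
The plan is to combine three ingredients: (i) the Freed--Hopkins--Teleman identification of ${}^G K^*_\tau(G)$ with the Verlinde algebra $V(\tau - h(G), G)$ as rings (equation \rref{eintro2}); (ii) a twisted Atiyah--Segal completion statement relating ${}^G K^*_\tau(G)$ to $K^*_\tau(LBG)$ (as in \rref{eintro3}); and (iii) the fact, established in Proposition \ref{ring_map_prop} and the accompanying discussion, that $\tilde h_\ast : K^\tau_\ast(LB_\ell G) \to K^\tau_\ast(Ad(E_\ell G))$ is a ring map which is an isomorphism in a range of degrees growing with $\ell$. The first step is to observe that the maps $\tilde h_\ast$ assemble into a map of inverse systems from $\{K^\tau_\ast(LB_\ell G)\}$ to $\{K^\tau_\ast(Ad(E_\ell G))\}$ (compatibility with the Pontrjagin--Thom restriction maps follows from naturality of the umkehr construction, since the embeddings $B_\ell G \hookrightarrow B_{\ell+1}G$ are compatible with $Ad(E_\ell G)\hookrightarrow Ad(E_{\ell+1}G)$); because $\tilde h$ is increasingly connected, this map of pro-rings is a pro-isomorphism, hence induces a ring isomorphism on inverse limits $\varprojlim K^\tau_\ast(LB_\ell G) \cong \varprojlim K^\tau_\ast(Ad(E_\ell G))$.

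Next I would identify $\varprojlim K^\tau_\ast(LB_\ell G)$ with $K^\tau_\ast(LBG)$. Since the connectivity of $j_\ell : B_\ell G \to BG$ grows, the induced maps $LB_\ell G \to LBG$ are increasingly connected, so the twisted $K$-homology groups $K^\tau_\ast(LB_\ell G)$ stabilize to $K^\tau_\ast(LBG)$ in each fixed degree; one must check that the Pontrjagin--Thom restriction maps in the inverse system are eventually the stabilization maps (up to the sign coming from the orientation), which is where a little care with the Thom isomorphism and the even codimension is needed. Then I would invoke the (homological) twisted completion theorem: since $LBG \simeq Ad(EG) = G\times_G EG$ is the Borel construction for the conjugation action, and the Atiyah--Segal-type completion theorem of C. Dwyer \cite{cdwyer} and Lahtinen \cite{lahtinen} identifies $K^\tau_\ast(LBG)$ with the $I$-adic completion $V(\tau - h(G), G)^{\wedge}_I$ of the Verlinde algebra at its augmentation ideal. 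This gives the additive isomorphism; the parity statements from \cite{fht} (concentration in a single degree mod $2$) ensure there is no ambiguity.

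The remaining — and I expect hardest — point is that the composite isomorphism is a \emph{ring} isomorphism for the \emph{fusion} product on the right and the Gruher--Salvatore multiplication (inverse limit of the products on $K^\tau_\ast(Ad(E_\ell G))$) on the left. The product on $K^\tau_\ast(Ad(E_\ell G))$ is defined by the push--pull $\tilde\mu_\ast \circ \tilde\Delta^! \circ \times$ over the diagram of fibrations with base $B_\ell G$; as $\ell \to \infty$ the base becomes $BG$, and the fibrewise multiplication $\tilde\mu$ becomes multiplication in $G$ on the adjoint bundle over $BG$, i.e.\ precisely the structure maps defining the fusion product on ${}^GK^*_\tau(G)$ via the conjugation action — this is exactly the comparison Gruher--Salvatore \cite{gs, kate} set up, and the twisted version uses primitivity of $\tau$ (valid since $G$ is simply connected) in the same way as in Section \ref{operations_section}. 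So one must check: first, that in the limit the Gruher--Salvatore product on $\varprojlim K^\tau_\ast(Ad(E_\ell G))$ agrees with the fusion/string product on $K^\tau_\ast(Ad(EG)) = K^\tau_\ast(LBG)$ (this is where the ring-map statement of \cite{gs} for the maps in the inverse system is used, together with a $\varprojlim^1$-vanishing argument coming from the finiteness of $V(\tau-h(G),G)$ and hence of all the groups in the system — each $K^\tau_\ast(Ad(E_\ell G))$ is a finitely generated abelian group, the system is Mittag--Leffler, so $\varprojlim^1 = 0$ and the limit behaves well); and second, that under the FHT isomorphism this equals the fusion product, completed $I$-adically — which is the content of \rref{eintro2} together with the fact (used already in \rref{ecomp1}) that completion is a ring map. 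The only genuinely subtle check is compatibility of the push--pull product structures across the whole tower, for which I would argue by the same cartesian-square / base-change bookkeeping as in the proof of Proposition \ref{ring_map_prop}, applied fibrewise over the maps $B_\ell G \to B_{\ell+1}G \to BG$.
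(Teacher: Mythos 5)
Your first step is where the argument breaks down: there is no inverse system on $\{K^\tau_*(LB_\ell G)\}$ for $\tilde h_*$ to map out of. The structure maps of the system $\{K^\tau_*(Ad(E_\ell G))\}$ are Pontrjagin--Thom collapses for the finite-codimension embeddings of closed manifolds $Ad(E_\ell G)\hookrightarrow Ad(E_{\ell+1}G)$; the corresponding inclusions $LB_\ell G\hookrightarrow LB_{\ell+1}G$ are \emph{not} finite-codimension embeddings (the ``normal bundle'' at a loop would be the space of sections of a pulled-back bundle over $S^1$), so there is no umkehr map on the loop space side and no square in which to check the naturality you invoke. This is precisely the technical difficulty flagged at the start of Section \ref{limit_section}, and is the reason the adjoint bundles are introduced at all: the loop spaces form a \emph{direct} system under the inclusions, while intersection-theoretic structure is contravariant, and only the finite-dimensional models $Ad(E_\ell G)$ carry both. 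Relatedly, the ``homological twisted completion theorem'' you invoke is not what \cite{cdwyer, lahtinen} provide: the twisted Atiyah--Segal theorem computes twisted $K$-\emph{cohomology}, $K^*_\tau(LBG)\cong {}^G K_\tau^*(G)^{\wedge}_I$, and since $LBG$ is not a finite-dimensional manifold you cannot pass from $K^\tau_*(LBG)$ to $K^*_\tau(LBG)$ by Poincar\'e duality.

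The paper's proof repairs exactly these two points by dualizing at each finite stage: the inverse system of Thom spectra $Ad(E_\ell G)^{-TB_\ell G}$ under Pontrjagin--Thom collapses is Spanier--Whitehead dual to the direct system of inclusions of the spaces $Ad(E_\ell G)$, giving $\varprojlim K^\tau_*(Ad(E_\ell G))\cong\varprojlim K^{-*}_\tau(Ad(E_\ell G))$, where the right-hand system now has ordinary restriction maps; Gruher's duality \cite{kate} identifies the string-type product $\tilde{\mu}_*\circ\tilde{\Delta}^!\circ\times$ with the push--pull product $\tilde{\mu}_!\circ\tilde{\Delta}^*\circ\times$ under this dualization, so the identification is multiplicative. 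A $\lim^1$ argument then gives $\varprojlim K^{-*}_\tau(Ad(E_\ell G))\cong K^*_\tau(Ad(EG))={}^G K_\tau^*(G\times EG)$, and the (cohomological) completion theorem together with \cite{fht} finishes the proof. Your final paragraph does contain the right idea for the multiplicative comparison (fibrewise multiplication in $G$ versus the fusion product), but it has to be run on the cohomology side after dualization, not through the free loop spaces.
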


\begin{proof}

The inverse system of Pontrjagin-Thom collapse maps
$$\cdots \to Ad(E_{\ell+1}G)^{-TB_{\ell+1}G} \to Ad(E_\ell G)^{-TB_\ell G} \to \cdots \to Ad(E_0 G)^{-TB_0 G}$$
is Spanier-Whitehead dual to the direct system of inclusions
$$\cdots \supseteq Ad(E_{\ell+1}G) \supseteq Ad(E_\ell G) \supseteq \cdots \supseteq Ad(E_0G)$$
and so there is an isomorphism
\beg{edual}{\varprojlim K_*^\tau(Ad(E_\ell G)) \cong \varprojlim K^{-*}_\tau(Ad(E_\ell G))}

The main result of \cite{kate} is that for untwisted homology theories, this duality throws the string topology product onto the ``fusion product."  The same is true in the twisted setting: as we have seen, the multiplication on $K_*^\tau(Ad(E_\ell G))$ is given by the formula $m:=\tilde{\mu}_* \circ \tilde{\Delta}^! \circ \times$.  This is evidently Poincar\'e dual to the product
$$p: K^*_\tau(Ad(E_\ell G)) \otimes K^*_\tau(Ad(E_\ell G)) \to K^*_\tau(Ad(E_\ell G))$$
defined as the composite $p = \tilde{\mu}_! \circ \tilde{\Delta}^* \circ \times$.  Therefore (\ref{edual}) is a ring isomorphism.  

Furthermore, since $\varinjlim Ad(E_\ell G) = Ad(EG) =G \times_G EG$, a $\lim^1$ argument implies that
$$\varprojlim K^*_\tau(Ad(E_\ell G)) \cong K^*_\tau(Ad(EG)) \cong {}^G K_\tau^*(G \times EG)$$
The last is the $G$-equivariant twisted $K$-theory of $G \times EG$.  By the twisted version of the Atiyah-Segal completion theorem \cite{cdwyer, lahtinen}, this is isomorphic to ${}^G K_\tau^*(G)^{\wedge}_I$.  Combining these results gives a ring isomorphism
$$\varprojlim K_*^\tau(Ad(E_\ell G)) \cong {}^G K_\tau^*(G)^{\wedge}_I$$
where the multiplication on the right side of the isomorphism is given by the $G$-equivariant transfer $\mu_!$ to the principal $G$-bundle $\mu: G \times G \to G$.  The main theorem in \cite{fht} then gives the desired isomorphism.  That this isomorphism preserves the ring structure is immediate, as we note that \cite{fht} show that the fusion product on $V(\tau-h(G), G)$ is carried to the product on ${}^G K_\tau^*(G)$ defined as the transfer to $\mu$.

\end{proof}

We note that a completion is a form of inverse limit; namely,
$$V(\tau-h, G)^{\wedge}_I = \varprojlim V(\tau-h, G)/I^l$$
We are thus lead to wonder if the isomorphism of Theorem \ref{completion_thm} is in fact realized on a geometric level:

\begin{conjecture} 

There is an increasing function $N: \N \to \N$ and a collection of isomorphisms 
$$f_\ell: K_*^\tau(Ad(E_\ell G)) \cong V(\tau-h, G)/I^{N(\ell)}$$
which are coherent across the inverse system -- that is, they induce the isomorphism of Theorem \ref{completion_thm} upon passage to the limit.  

\end{conjecture}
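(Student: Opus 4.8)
Since the statement is a conjecture I can only outline a strategy. The case $G = Sp(1)$ is, in effect, already established in Section \ref{s1}: there $Ad(E_\ell Sp(1)) = Y(\ell,1)$, and Theorem \ref{t1} together with the isomorphism \rref{etwee6} exhibits $K^\tau_*(Y(\ell,1))$ as $\Z[y]/(\sigma^{m-1}(y), y^{\ell+1})$ -- a ring annihilated by a power of $m$, hence already $p$-complete -- which is exactly $V(m,1)/I^{\ell+1}$; moreover the Pontrjagin--Thom collapse maps $K^\tau_*(Y(\ell+1,1)) \to K^\tau_*(Y(\ell,1))$ are the evident $I$-adic projections. So one expects $N(\ell) = \ell+1$ here, and the content of the conjecture lies in a general simply connected $G$ with its sequence $B_\ell G$.

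The plan is to compare the twisted Serre (equivalently, Atiyah--Hirzebruch) spectral sequence of the fibration $G \to Ad(E_\ell G) \to B_\ell G$ with that of $G \to Ad(EG) \to BG$ along the map $j_\ell$, whose connectivity $c_\ell$ tends to infinity. First I would record, using \cite{fht} and the fact invoked in the proof of Proposition \ref{pcomp1} that the Atiyah--Hirzebruch filtration on $K^0(BG)$ is the $I$-adic filtration, that the skeletal filtration on $K^\tau_*(Ad(EG)) = V(\tau-h,G)^\wedge_I$ agrees with the $I$-adic filtration after the (uniform, $G$-dependent) reindexing that converts skeletal degree on $BG$ into augmentation-ideal degree. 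Second, naturality of the spectral sequence along $j_\ell$ -- as a spectral sequence of modules over the fusion product, via Proposition \ref{ring_map_prop} and the construction of the product on $K^\tau_*(Ad(E_\ell G))$ -- forces the $E_2$-page, every differential, and every additive extension to agree with those for $Ad(EG)$ through skeletal degree $< c_\ell$, so that restriction identifies $K^\tau_*(Ad(E_\ell G))$ with $V(\tau-h,G)^\wedge_I$ modulo the power of $I$ corresponding to $c_\ell$ -- provided, thirdly, that nothing survives above the stable range, i.e. $F^{c_\ell} K^\tau_*(Ad(E_\ell G)) = 0$. For $Sp(1)$ this holds because $\dim Y(\ell,1)$ equals $c_\ell$; in general one must either arrange $\dim Ad(E_\ell G)$ to lie in the matched range or absorb the top cells by the Poincar\'e-duality argument of Section \ref{s2} (the $B_\ell G$ are $K$-oriented). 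Taking $N(\ell)$ to be the largest $I$-adic degree through which this identification holds then produces the isomorphisms $f_\ell: K^\tau_*(Ad(E_\ell G)) \cong V(\tau-h,G)/I^{N(\ell)}$.

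For coherence, the inclusions $Ad(E_\ell G) \hookrightarrow Ad(E_{\ell+1}G) \hookrightarrow Ad(EG)$ respect skeletal filtrations, and the Pontrjagin--Thom collapse maps are ring homomorphisms (by \cite{gs}) compatible with the comparison maps to $K^\tau_*(Ad(EG))$; so the squares comparing $f_{\ell+1}$, $f_\ell$ and the projection $V/I^{N(\ell+1)} \to V/I^{N(\ell)}$ commute on associated graded objects, and one upgrades this to strict commutativity since every group in sight is a finite iterated extension of its graded pieces. Passing to $\varprojlim$ recovers the isomorphism of Theorem \ref{completion_thm}.

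The principal obstacle is precisely the difficulty flagged in the text: the twisted Atiyah--Hirzebruch spectral sequence for $Ad(EG) \simeq LBG$ is intractable already for $Sp(n)$ with $n>1$, and knowing its abutment $V^\wedge_I$ from \cite{fht} does not pin down its higher differentials or exotic extensions -- indeed the computation of $K^\tau_*(L\H P^\ell)$ in Section \ref{operations_section} shows that even in rank one the extensions need not follow a simple pattern and can only be resolved using the loop product. Concretely, one must prove that the kernel of $V^\wedge_I \twoheadrightarrow K^\tau_*(Ad(E_\ell G))$ is exactly a power of the augmentation ideal, not merely an ideal trapped between two consecutive powers; a priori this determines $N(\ell)$ only up to bounded error, so the real work is to make a single coherent choice of the increasing function $N$ across the whole tower compatible with the fusion product. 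A secondary, more technical point is to identify the Thom and normal-bundle data in the Pontrjagin--Thom maps uniformly in $\ell$, so that the conjectured coherence holds on the nose and not merely pro-isomorphically.
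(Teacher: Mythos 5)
This statement is left as a conjecture in the paper: there is no proof to compare against, only the remark that the $G=Sp(1)$ case holds with $N(\ell)=\ell+1$, ``as evidenced by Theorem \ref{hpn_thm}.'' You correctly treat it as such. Your verification of the $Sp(1)$ case is essentially the paper's: you go through Theorem \ref{t1} and the isomorphism \tref{etwee6}, which identify $K^\tau_*(Y(\ell,1))$ with $\Z[y]/(\sigma^{m-1}(y),y^{\ell+1})=V(m,1)/I^{\ell+1}$, whereas the paper points at Theorem \ref{hpn_thm} and the ring map $\tilde h_*$ of Proposition \ref{ring_map_prop} (which kills $t$ and lands in the same quotient); these are two faces of the same computation. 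One small caveat: \tref{etwee6} is a priori only an isomorphism of $\Z[[y]]$-modules, so to conclude it is the ring isomorphism $f_\ell$ you should note that $1$ generates and that the $K^0(\H P^\ell)$-module structure of Proposition \ref{mod_prop} is compatible with the Gruher--Salvatore product -- which is in effect what the paper's appeal to Theorem \ref{hpn_thm} supplies.

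For general $G$ your outline is a reasonable attack but, as you say yourself, it does not close. Two of its steps deserve flagging as the actual open content. First, comparison of twisted Serre spectral sequences along $j_\ell$ only identifies $K^\tau_*(Ad(E_\ell G))$ with $V^\wedge_I$ modulo \emph{some} ideal squeezed between consecutive powers of $I$; showing the kernel is \emph{exactly} $I^{N(\ell)}$ requires control of the higher differentials and extensions in the AHSS for $LBG$, which the paper explicitly describes as intractable for $Sp(n)$, $n>1$. Second, your coherence argument is not valid as stated: commutativity of the comparison squares on associated graded objects does not in general upgrade to strict commutativity merely because the groups are finite iterated extensions of their graded pieces; one needs an actual compatible choice of the $f_\ell$, e.g.\ by constructing them all as restrictions of a single map out of $V^\wedge_I$. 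Neither point is a defect relative to the paper -- the paper proves nothing here -- but both are precisely why the statement remains a conjecture rather than a theorem.
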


This is true in the case of $G=Sp(1)$ (where $N(\ell) = \ell+1$), as evidenced by Theorem \ref{hpn_thm}.

%
%
%
%
%
%
%
%
%
%
%

\subsection{The coproduct}

Gruher also defines a coproduct on the twisted $K$-theory $K^\tau_*(Ad(E_\ell G); \F)$ with field coefficients; it is given by
$$\nu = {\times}^{-1} \circ \tilde{\Delta}_* \circ \tilde{\mu}^!: K^\tau_*(Ad(E_\ell G); \F) \to K^\tau_*(Ad(E_\ell G); \F) \otimes K^\tau_*(Ad(E_\ell G); \F)$$
For untwisted homology theories, this map is the Poincar\'e dual to the product, and in fact forms part of a Frobenius algebra structure.  It is unclear whether the same is true in the twisted setting, since the constant map $Ad(E_\ell G) \to pt$ does not induce a map in twisted $K$-homology.

Nonetheless, the Pontrjagin-Thom map
$$K^\tau_*(Ad(E_{\ell+1}G)) \to K^\tau_*(Ad(E_\ell G))$$
preserves the coproduct; consequently the inverse limit
$$\varprojlim K_*^\tau(Ad(E_\ell G)) \cong V(\tau-h(G), G)^{\wedge}_I$$
admits a coproduct induced by $\nu$.  It is not at all obvious whether this is related to the coproduct derived from the Frobenius algebra structure on the Verlinde algebra.

However, it is \emph{not} true that the map
$$\tilde{h}_*: K^\tau_*(LB_\ell  G; \F) \to K^\tau_*(Ad(E_\ell G); \F)$$
is a homomorphism of coalgebras.  One can try to mimic the proof of Proposition \ref{ring_map_prop}, but  the argument fails, since the right square is not in fact Cartesian.  Indeed, $concat_!$ is given by intersection theory, whereas $\mu_!$ is given by a $G$-transfer.

\section{Concluding remarks}
\label{sconc}

We have examined several different field theories in this paper.  The Verlinde algebra $V(m, G)$ is a Poincar\'e algebra, or equivalently, a topological quantum field theory (TQFT).  We have constructed a product and coproduct on the twisted string $K$-theory of a manifold $K^\tau_*(LM)$.  By virtue of the IHX relation (Proposition \ref{IHX_prop}, these combine in such a way as to give $K^\tau_*(LM)$ the structure of a ``TQFT without trace" or ``positive boundary TQFT," as was done in the homological setting by \cite{cg}.  The adjoint bundle $K$-theories $K^\tau_*(Ad(E_\ell G))$ serve as an imperfect bridge between these, preserving products, but not necessarily coproducts.

Despite this connection, when it comes to higher-genus operations, $V(m, G)$ and $K^\tau_*(LM)$ display markedly different behavior, as evidenced by the vanishing of $T$ in $K^\tau_*(LM)$), and its rational invertbility in $V(m, Sp(n))$.  It is natural to ask for the reasons behind these differences.  To approach this question, consider yet another form of field theory: Gromov-Witten theory.  We would like to think of the Verlinde algebra as a twisted $K$-theoretic analogue of Gromov-Witten theory for the stack $[*/G]$. 

String topology and Gromov-Witten theory share some ideas in their construction, at least on a schematic level.  They both involve a push-pull diagram of the form:
\beg{econc}{
(LX)^m \leftarrow Map(\Sigma, X) \rightarrow (LX)^n
}
where $\Sigma$ is a surface with $m+n$ boundary components, the maps are restrictions along boundaries, and the various function spaces are to be interpreted in the right categories.  

The shriek map in Gromov-Witten theory applies a type of intersection theory, using the deep fact of the existence of a virtual fundamental class on the compactification of the moduli of maps $\Sigma \to X$.  In contrast, the  one in string topology applies a Becker-Gottlieb type transfer using the fairly straightforward fact that the maps in \rref{econc} are fibrations with compact fibre when $X=BG$.  Said another way, in string topology, the fibers of the maps in \rref{econc} are compact, whereas in Gromov-Witten theory, the spaces themselves are compact (to the eyes of cohomology, at least).

As hinted in the Introduction, in this paper, we were interested in $K$-theory
information. In conformal field theory, an example of such information is the Verlinde
algebra, or more precisely modular functor of a CFT. According to modern string
theory, the physical aspects of strings, such as branes or even physical partition
function, are also based on $K$-theory. If we look at homology with characteristic $0$
coefficients instead of $K$-theory, topological quantum field theories should come
out of the ``state space'' of the conformal field theory itself. In effect, when $N=(2,2)$
supersymmetry is present, we can construct such models, namely the $A$-model
and the $B$-model. This also fits into the Gromov-Witten picture: $N=(2,2)$ supersymmetric
conformal field theories are expected to arise not from ordinary manifolds, but
from Calabi-Yau varieties. While a rigorous direct construction of such model
is not known, Fan-Jarvis-Ruan \cite{fjr} constructed the topological $A$-model,
along with TQFT structure, and coupling to compactified gravity, in the related case
of Landau-Ginzburg orbifold, via applying Gromov-Witten theory to the Witten equation. 
One can then ask if one can somehow extend these methods to obtain $K$-theory
rather than characteristic $0$ homology information.

\section{Appendix: Some foundations}

\label{app}

It is not the purpose of the present paper to 
discuss in detail the foundations of twisted $K$-theory.
Many of the results needed here can be read off
from the approach of \cite{as1}. More systemic
approaches, needed for some of the more complicated
assertions, use the setup of parametric spectra,
which is developed for example in \cite{ms}, or \cite{hu}
(the latter approach is simpler, but requires some corrections).
The main point is to note that the required foundations
exist, even though they are somewhat scattered throughout
the literature. Let us recapitulate here some key points.

\vspace{3mm}
Let us work in the category of parametric $K$-modules
over a space $X$,and let us denote this category by $K-mod/X$.
Then the projection $p:X\r*$
(actually, the point can be replaced by any space) has a pullback map 
$$p^*:K-mod/* \r K-mod_X,$$
which has a left adjoint $p_\sharp$ and a right adjoint $p_*$.

The category $K-mod/X$ has an internal (``fiberwise'')
smash product $\wedge_{K/X}$ and function spectrum $F_{K/X}$.
Twistings $\tau$ are objects of $K-mod/X$ which are invertible
under $\wedge_{K/X}$. There are the usual ``exponential'' adjunctions.

\vspace{3mm}
Now let us consider the universal coefficient theorem. It is better to deal
with $K$-modules than with coefficients. The $\tau$-twisted
$K$-homology module is $p_\sharp\tau$, the $\tau$-twisted $K$-cohomology
module is $p_*\tau$. We have
\beg{efound1}{\begin{array}{l}F_{K/*}(p_\sharp\tau,K)=p_*F_{K/*}(\tau,p^*K)=p_*F_{K/*}(\tau,1)=\\
p_*F_{K/*}(1,\tau^{-1})=p_*\tau^{-1}.
\end{array}}
So this is the usual universal coefficient theorem, the contribution of the twisting
is that it gets inverted. Note, however, that the complex conjugation 
automorphism of $K$-theory reverses the sign of twisting,
so $K$-(co)-homology groups with opposite twistings are isomorphic.

\vspace{3mm}
Let us now discuss Poincare duality:
When $X$ is a closed (finite-dimensional) manifold,
its $K$-dualizing object
$\omega$ is $K$ fiberwise smashed with its stable
normal bundle (considered as a parametric spectrum over $X$).
Up to suspension by the dimension $d$ of $X$, $\omega$ is a twisting.
When $\omega=1[-d]$, $X$ is called $K$-orientable.
(so when for example $H^3(X,Z)=0$, $X$ is $K$-orientable).
Poincare duality states that for a parametric $K$-module over $X$,
\beg{efound2}{
p_*? = p_\sharp(\omega\wedge_{K/X}?).
}
So indeed, when the manifold is orientable, its $K_\tau$ homology
and cohomology is the same up to dimensions shift. 

\vspace{3mm}
Using these foundations, usual results on $K$-theory extend immediately
to the twisted context. For example, there is a twisted Serre (and hence Atiyah-Hirzebruch)
spectral sequence converging to twisted $K$-homology or cohomology. 
Also, the above foundations can be extended to the equivariant case,
in which case there is a completion theorem, asserting that for $G$
compact Lie, and a finite
$G$-CW complex $X$, the non-equivariant twisted $K$-cohomology of
$X\times_G EG$ is isomorphic to the completion of the equivariant
twisted $K$-theory of $X$, completed at the augmentation ideal of
$R(G)=K^{*}_{G}(*)$.  For a detailed proof of this result, we refer the reader to \cite{cdwyer, lahtinen}.

\vspace{3mm}
Let us make one more remark on notation. We will be using both the induced map
and the transfer for a closed inclusion of manifolds $f:X\r Y$. 
Another form of duality then states that
\beg{edd1}{f_{!}\simeq f_*.}
Here $f_!$ is $f_\sharp$ composed with smashing with the sphere bundle which is the fiberwise
$1$-point compactification of the normal bundle of $X$ in $Y$ (alternately, take
the corresponding invertible parametrized $K$-module, and smash over $K_X$). 
Our interest in this
is that we need to consider the induced map and transfer map of a smooth
inclusion $f$.  First of all, for a twisting $\tau$ on $Y$, if we denote by
$[?,?]$ homotopy classes of maps of parametrized $K$-modules, and denote
by $1_Y$ the trivial $K$-module on $Y$, and by $\tau$ a twisting on $Y$,
clearly we have a map
\beg{edd2}{f^*:[1_Y,\tau]\r [1_X,f^*\tau],}
as $1_X=f^*1_Y$. Assuming now (say) that the normal bundle of
$X$ in $Y$ is $K$-orientable of real codimension $k$, we can produce a map in the other direction
\beg{edd3}{f_!:[1_X,f^*\tau]\r [1_Y,\tau[k]]}
by taking a map $1_X=f^*1_Y\r f^*\tau$, taking the adjoint $1_Y\r f_*f^*1_Y$,
using \rref{edd1}, and composing with the counit  of the adjunction $?^*$, $?_\sharp$.

We see now that the notation \rref{edd2}, \rref{edd3} is not really justified in terms
of parametrized $K$-modules: in effect, it is ``one level below in terms of 
$2$-category theory'', and fits more with the base change functors applied to
vector bundles which represent classes in the cohomology groups involved in
\rref{edd2}, \rref{edd3}. This makes selecting notation for the corresponding
maps in cohomology precarious.   As a compromise between possibly contradicting
allusions, we use $f_*$ for the induced map in homology, and $f^!$ for the transfer.
Thus, $f_*$ (in twisted $K$-homology) is induced by the adjunction counit
$$f_\sharp f^*\tau \r \tau,$$
$f^!$ (in homology) is induced by the adjunction unit
$$\tau\r f_*f^*\tau,$$
together with \rref{edd1}.

We should note that in this paper, we also use infinite extensions of these duality results,
allowed by the work of Cohen and Klein \cite{ckl}.

\end{document}